\documentclass[11pt]{article}

\usepackage[margin=1in]{geometry}

\usepackage{graphicx}
\usepackage{amsfonts}
\usepackage{mathrsfs}
\usepackage[percent]{overpic}
\usepackage{tikz}
\usepackage{calc}
\usetikzlibrary{shapes.geometric, arrows, positioning, decorations.pathreplacing}
\usepackage{url}
\usepackage{subfigure}
\usepackage{appendix}
\usepackage[normalem]{ulem}
\usepackage{bm}
\usepackage{amssymb, amsmath}
\usepackage{parskip}

\usepackage{mathrsfs}
\usepackage{soul}

\usepackage{lipsum}
\usepackage{epstopdf}
\usepackage{algorithm}
\usepackage{algpseudocode}

\usepackage{amsmath,amssymb,amsfonts}
\usepackage{mathtools}
\usepackage[colorlinks=true,
            linkcolor=blue,
            citecolor=blue,
            urlcolor=blue]{hyperref}

\usepackage{amsthm}
\newtheorem{theorem}{Theorem}[section]

\theoremstyle{definition}

\theoremstyle{remark}
\newtheorem{remark}[theorem]{Remark}
\theoremstyle{hypothesis}

\theoremstyle{claim}

\theoremstyle{example}
\newtheorem{example}[theorem]{Example}

\usepackage{graphicx}
\usepackage{tikz} 
\usetikzlibrary{arrows.meta,shapes.geometric}

\usepackage{enumitem}

\usepackage{xcolor}

\usepackage[colorlinks=true,
            linkcolor=blue,
            citecolor=blue,
            urlcolor=blue]{hyperref}

\tikzset{ 
    tree_edge/.style={
        thick, -
    },
    edge from parent/.style={
        draw, thick, -
    } 
}

\tikzstyle{level 1}=[level distance=12.5mm,sibling distance=3.35cm] 
\tikzstyle{level 2}=[level distance=12.5mm,sibling distance=2cm]
\tikzstyle{level 3}=[level distance=12.5mm,sibling distance=1cm]

\ifpdf
  \DeclareGraphicsExtensions{.eps,.pdf,.png,.jpg}
\else
  \DeclareGraphicsExtensions{.eps}
\fi

\usepackage{cleveref}

\usepackage[numbers,sort&compress]{natbib}

\title{A Mass, Momentum, and Energy Conserving Semi-Lagrangian Adaptive-Rank (SLAR) Method for the Vlasov-Poisson System}

\author{Nanyi Zheng\thanks{Department of Mathematical Sciences, University of Delaware, Newark, DE, 19716, USA. nyzheng@udel.edu.}
\and William A. Sands\thanks{Theoretical Division, Los Alamos National Laboratory, Los Alamos, NM, 87545, USA. wsands@lanl.gov.}
\and Jing-Mei Qiu\thanks{Corresponding author. Department of Mathematical Sciences, University of Delaware, Newark, DE, 19716, USA.
jingqiu@udel.edu.}}

\begin{document}

\maketitle

\begin{abstract}
We propose a semi-Lagrangian adaptive-rank (SLAR) method that combines the large time-step capability of semi-Lagrangian schemes with the efficiency of adaptive-rank tensor representations while simultaneously enforcing local conservation laws for mass, momentum, and energy. The method builds on the high-dimensional SLAR framework introduced in our previous work \cite{zheng2025semihighD} and achieves high-order accuracy in both space and time. To address the loss of conservation in long-time simulations, we extend the implicit local macroscopic conservative (LoMaC) correction technique for the BGK equation \cite{sands2025adaptive} to the high-dimensional Vlasov--Poisson (VP) system. The implicit macroscopic system is discretized using backward differentiation formulas and solved with a Jacobian-free Newton-Krylov method. This approach enables a consistent coupling with semi-Lagrangian methods which are capable of taking large time steps. A novel component of the proposed method is a unified adaptive-weight projection technique that eliminates the ad hoc parameter tuning required by previous LoMaC approaches. These weights capture problem-dependent velocity space structures and are constructed from the low-rank velocity bases of the solution. The local semi-Lagrangian method used in this work reconstructs the solution at the feet of the characteristics using efficient tensor contractions. To the best of our knowledge, this is the first successful implementation of an implicit LoMaC method for the VP system up to the 2D--2V setting. Numerical experiments on several classical benchmark problems demonstrate the accuracy and efficiency of the proposed method, as well as its ability to preserve conservation laws in VP simulations.
\end{abstract}

\textbf{Keywords:}
Semi-Lagrangian, Vlasov-Poisson system, Hierarchical Tucker decomposition, Curse of dimensionality, Conservation laws

\section{Introduction}

A wide range of numerical methods have been developed for the nonlinear Vlasov equation and, more generally, for multiscale kinetic models of plasma dynamics. Two classes of methods that are particularly relevant to this work are semi-Lagrangian (SL) schemes and low-rank or adaptive-rank tensor methods, which target complementary aspects of efficiency. SL schemes are particularly attractive because they alleviate the restrictive time step constraints imposed by Eulerian CFL conditions. However, their direct application to problems posed in high-dimensional phase space is limited by the curse of dimensionality, which motivates their combination with low-rank or adaptive-rank tensor methods. For broader reviews of SL schemes and low-rank methods, we refer to \cite{falcone2013semi, cho2024conservative} and \cite{einkemmer2025review}, respectively. Low-rank methods compress discrete representations of high-dimensional functions using structured tensor formats, such as tensor trains (TT), Tucker tensors, and
hierarchical Tucker (HT) tensors. In this work, we focus on the HT format as a continuation of the semi-Lagrangian adaptive-rank (SLAR) framework developed in our previous work \cite{zheng2025semihighD}. However, the ideas developed here can be extended to other low-rank tensor formats.

In the SL formulation considered here, the solution at the Eulerian degrees of freedom is updated by interpolating or projecting data at the feet of the characteristics, which are traced backward in time. Such methods have been developed in finite-difference (FD) \cite{sonnendrucker1999semi,qiu2010conservative},
finite-volume (FV) \cite{filbet2001conservative,crouseilles2010conservative,
zheng2022fourth}, and discontinuous Galerkin (DG) \cite{rossmanith2011positivity,cai2018high} settings for the Vlasov--Poisson
(VP) system. SL--FD schemes with dimensional splitting are attractive for their simplicity and efficiency, and high-order conservative formulations are available in that setting. Without dimensional splitting, however, conservative SL--FD schemes are more difficult to construct. SL--FV and SL--DG methods instead evolve cell averages and higher moments, offering a more natural route to conservation, although often at the expense of remapping solutions onto curved upstream cells with careful geometric reconstruction and clipping \cite{cai2018high, zheng2022fourth}. For nonlinear kinetic models such as the VP system, the characteristics depend on self-consistent fields generated by the solution itself, so accurate tracing typically requires predictor-corrector or exponential-integrator strategies \cite{qiu2017high,cai2021high}. Diffusion and source terms can also be incorporated naturally along characteristics \cite{li2023high,ding2020semi}.

Low-rank and adaptive-rank techniques have recently emerged as effective tools for reducing computational complexity and alleviating the curse of dimensionality in the VP system and other kinetic simulations \cite{einkemmer2025review}. Representative approaches for the VP system include SL low-rank schemes \cite{kormann2015semi}, dynamical low-rank methods (DLR) \cite{einkemmer2018low}, Eulerian step-and-truncate (SAT) formulations \cite{guo2022low}, quantized tensor-train methods \cite{ye2024quantized}, and adaptive-rank algorithms which rely on greedy sampling \cite{zheng2025semi,zheng2025semihighD}. While these methods differ in their choices for the tensor format and discretization strategy, they avoid forming the full high-dimensional tensor on the associated phase-space. In order to remain tractable, these methods require the use of tensor truncation when performing initialization or basic operations, which degrades the macroscopic conservation properties of the underlying kinetic equation.

Conservative projection techniques have been proposed to improve the physical fidelity of low-rank kinetic simulations of the VP system. For DLR methods, the first approach that conserves mass, momentum, and energy  was proposed in \cite{einkemmer2021mass}, where the standard DLR ansatz is augmented so that the velocity space contains the moment functions needed to enforce conservation in a weighted \(L^2\) sense. Related moment-preserving projection ideas were introduced in \cite{guo2024conservative} and further developed in \cite{guo2024local,guo2024localDG} for Eulerian SAT methods, leading to the locally macroscopic conservative (LoMaC) framework. The basic idea of LoMaC is to evolve the microscopic kinetic equation together with a corresponding macroscopic moment system, using the kinetic solution to provide closure for the moment equations. Since the first $d_{v}+2$ moments determine the macroscopic density, momentum density, and kinetic-energy density, a conservative solver can be applied to the moment system. The resulting macroscopic quantities are then used to construct a projection that corrects the non-conservative adaptive-rank kinetic solution. This coupling is similar in spirit to high-order/low-order (HOLO) methods \cite{chacon2017multiscale}, but the role of the moment system is different. In HOLO methods, the low-order moment system is used within an iterative nonlinear coupling strategy to accelerate or stabilize the solution of the high-order kinetic problem. This requires repeated updates to both systems until they are self-consistent. In the LoMaC framework considered here, the kinetic equation is advanced only once to obtain a provisional solution, and the moment system is then used to define a conservative correction. 

While this strategy has been successfully incorporated into Eulerian SAT schemes as an explicit post-processing step, its extension to SLAR methods presents two main challenges. First, since SL methods are not restricted by the usual CFL condition, consistent coupling with the macroscopic moment system requires an implicit treatment. Second, the conservative projection depends on the construction of a suitable velocity space ansatz used to encode the corrected moments. In recent work \cite{sands2025adaptive}, we proposed an implicit LoMaC technique for the BGK equation using a local Maxwellian ansatz. This choice is natural for collisional problems which contain a relaxation mechanism that drives the solution toward this equilibrium. The local Maxwellian also encodes the macroscopic moments of the distribution and prescribes the appropriate decay in velocity space. However, the collisionless VP system does not contain an analogous relaxation mechanism, so the distribution may not be well described by this locally Maxwellian ansatz. Moreover, existing explicit LoMaC corrections for Eulerian low-rank solvers \cite{guo2024local,guo2024localDG} rely on problem-dependent, empirically tuned velocity space weights.

To address these challenges, we propose an SLAR method with an implicit LoMaC correction that uses an adaptive weight function. Our approach builds on the hierarchical Tucker adaptive cross approximation (HTACA) technique recently introduced in \cite{zheng2025semihighD}. HTACA constructs a low-rank approximation in the HT format by sampling selected entries of the tensor, typically along fibers, which eliminates the need to form the full high-dimensional tensor. In the present work, these sampled entries are evaluated using a new local SL--FD solver that exploits the tensor-product structure of the problem and requires \(\mathcal{O}(d r^3)\) operations per grid point, where \(r\) denotes the maximum rank of the solution at the previous time step. In comparison, the previous SL--FD scheme in \cite{zheng2025semihighD} has a per-entry evaluation cost of \(\mathcal{O}(d^3 r^3)\). Using von Neumann stability analysis, we show that the proposed SL--FD scheme is unconditionally stable when applied to the \(d\)-dimensional constant-coefficient linear advection equation.

The central contribution of this work is a new implicit LoMaC correction that enforces local conservation of mass, momentum, and total energy. A key innovation of the proposed projection is its adaptive weight function, which can be computed efficiently from the velocity bases available in the low-rank representation, eliminating the ad hoc parameter tuning required by earlier approaches \cite{guo2024local,guo2024localDG}. Such adaptive projection reflects problem-dependent velocity space structures. The macroscopic moment system, whose solution provides the moments used to construct the projection, is discretized using backward differentiation formulas (BDF), and the resulting nonlinear systems are solved by a Jacobian-free Newton--Krylov (JFNK) method, following our previous work \cite{sands2025adaptive}. In contrast to explicit LoMaC algorithms previously developed for the VP system \cite{guo2024local,guo2024localDG}, the proposed implicit correction is compatible with the large time steps permitted by SL discretizations. The proposed correction substantially improves the poor long-time conservation behavior observed in the SLAR method of \cite{zheng2025semihighD}.

We briefly summarize the notation used throughout the paper.
Scalar functions are denoted by lowercase letters, e.g., $u(x)$,
while their discrete values are written as $u_i$ or $u_{i_1,\dots,i_d}$.
Vector-valued functions are denoted by bold letters, e.g.,
$\bm u(x)$ or $\bm U(x)$. Discrete vectors are denoted by bold lowercase letters, e.g.,
$\mathbf u \in \mathbb{R}^N$, whereas discrete matrices and higher-order
tensors are denoted by bold uppercase letters, for example
$\mathbf A \in \mathbb{R}^{N\times M}$ and
$\mathbf X \in \mathbb{R}^{N_1\times \cdots \times N_d}$.
Calligraphic uppercase
letters, such as $\mathcal{F}$, are used to denote
discrete tensors in compressed form, such as the HT format.
The scalar entries of a bold tensor are denoted using the multi-index notation, e.g., $X_{i_1,\dots,i_d}$. 


The remainder of the paper is organized as follows. In \Cref{sec:model}, we introduce the kinetic VP system and its associated macroscopic system. In \Cref{sec:scheme}, we present the high-dimensional SLAR scheme with the proposed implicit LoMaC correction. Numerical results are reported in \Cref{sec:numerical_tests}, and the conclusion is given in \Cref{sec:conclusion}.

\section{Model Equations}\label{sec:model}

In this section, we summarize the kinetic model and its associated macroscopic conservation laws. A delicate aspect of the VP system is the treatment of the energy equation. In particular, the moment equation obtained from the kinetic energy alone does not yield a locally conservative balance law for the total energy, since it must be coupled with the field energy associated with the self-consistent electric field. We therefore briefly outline the derivation of the macroscopic system and emphasize the form of the total energy density and flux. This structure guides the construction of the discretization developed in \Cref{sec:scheme}.

\subsection{Kinetic Model}

We consider a collisionless electrostatic plasma model consisting of ions and electrons. 
To simplify the model, we assume that the heavier ions remain stationary and provide a spatially uniform neutralizing background charge density. 
Under this assumption, the electron dynamics are governed by the VP system:
\begin{align}
    &\partial_t f_{\mathrm{e}} 
    + \bm{v}\cdot\nabla_{\bm{x}} f_{\mathrm{e}} 
    + \frac{q_{\mathrm{e}}}{m_{\mathrm{e}}} \bm{E}\cdot\nabla_{\bm{v}} f_{\mathrm{e}} = 0,
    \quad (\bm{x},\bm{v},t)\in \Omega_{\bm{x}}\times\Omega_{\bm{v}}\times(0,T], \label{eq:Vlasov}\\
    &-\Delta_{\bm{x}} \phi = \rho/\epsilon_{0}, 
    \quad (\bm{x},t)\in \Omega_{\bm{x}}\times(0,T], \label{eq:Poisson}\\
    &\bm{E} = -\nabla_{\bm{x}} \phi,
    \quad (\bm{x},t)\in \Omega_{\bm{x}}\times(0,T]. \label{eq:Efromphi}
\end{align}
Here, $f_{\mathrm{e}} = f_{\mathrm{e}}(\bm{x},\bm{v},t)$ denotes the electron distribution function in phase space, which represents the probability of finding an electron at position $\bm{x} \in \Omega_{\bm{x}} \subset \mathbb{R}^{d_x}$ with velocity $\bm{v} \in \Omega_{\bm{v}} = \times_{\mu = 1}^{d_v}\left(\Omega_{v^{(\mu)}}\right)\subset \mathbb{R}^{d_v}$ at time $t$. 
The constants $q_{\mathrm{e}}$ and $m_{\mathrm{e}}$ denote the electron charge and mass, respectively, and $\epsilon_0$ is the vacuum permittivity. The self-consistent electrostatic potential $\phi = \phi(\bm{x},t)$ determines the electric field through $\bm{E} = -\nabla_{\bm{x}}\phi$, and $\rho(\bm{x},t)$ denotes the total charge density. In this work, periodic boundary conditions are prescribed for the system \eqref{eq:Vlasov}--\eqref{eq:Efromphi} on the phase space \(\Omega_{\bm{x}}\times\Omega_{\bm{v}}\). Moreover, we rescale the VP system \eqref{eq:Vlasov}--\eqref{eq:Poisson} so that 
\[
m_{\mathrm e}=1, \quad q_{\mathrm i}=1, \quad q_{\mathrm e}=-1, \quad \epsilon_{0} = 1.
\]

Next, we introduce notation for the velocity moments of the electron distribution function. For convenience, we denote integration over the velocity domain by
\[
\left\langle \cdot \right\rangle_{\bm v}
:=
\int_{\Omega_{\bm v}} (\cdot),d\bm v .
\]
We define the raw moments
\begin{equation*}
\begin{aligned}
    \bm M(f_{\mathrm e})
    &:=
    \Big(
    M_0(f_{\mathrm e}),
    M_{1,1}(f_{\mathrm e}),
    \ldots,
    M_{1,d_{v}}(f_{\mathrm e}),
    M_2(f_{\mathrm e})
    \Big), \\
    &:=
    \Big(
    M_0(f_{\mathrm e}),
    \bm M_{1}(f_{\mathrm e}),
    M_2(f_{\mathrm e})
    \Big),
\end{aligned}
\end{equation*}
where
\begin{equation}
\label{eq:raw moments}
    M_0(f_{\mathrm e})
    :=
    \left\langle f_{\mathrm e}\right\rangle_{\bm v},
    \qquad
    M_{1,\mu}(f_{\mathrm e})
    :=
    \langle v^{(\mu)} f_{\mathrm e}\rangle_{\bm v},
    \qquad
    M_2(f_{\mathrm e})
    :=
    \langle |\bm v|^2 f_{\mathrm e}\rangle_{\bm v}.
\end{equation}
Note that the electron number density, momentum density, and kinetic energy density can be defined in terms of the moments \eqref{eq:raw moments} by
\begin{equation}
\label{eq:physical_moments}
n_{\mathrm e}
=
M_0(f_{\mathrm e}),
\qquad
m_{\mathrm e}n_{\mathrm e}\bm u_{\mathrm e}
=
m_{\mathrm e}\bm M_1(f_{\mathrm e}),
\qquad
e_{k,\mathrm e}
=
\frac12 m_{\mathrm e}M_2(f_{\mathrm e}).
\end{equation}
The charge and current densities are electromagnetic quantities which can be derived from the moments \eqref{eq:physical_moments}. In particular, the total charge density is
\begin{equation}
\label{eq:charge_density}
\rho(\bm x,t)
=
\rho_{\mathrm i}+\rho_{\mathrm e}(\bm x,t)
=
q_{\mathrm i}n_{\mathrm i}
+
q_{\mathrm e}n_{\mathrm e}(\bm x,t)
=
q_{\mathrm i}n_{\mathrm i}
+
q_{\mathrm e}M_{0}(f_{\mathrm e})(\bm x,t),
\end{equation}
where \(n_{\mathrm i}\) is constant under the stationary-ion assumption. Similarly, the total current density is
\begin{equation}
\label{eq:current_density}
\bm J(\bm x,t)
=
\bm J_{\mathrm i}+\bm J_{\mathrm e}(\bm x,t)
=
\bm J_{\mathrm e}(\bm x,t)
=
q_{\mathrm e}n_{\mathrm e}\bm u_{\mathrm e}(\bm x,t)
=
q_{\mathrm e}\bm M_1(f_{\mathrm e}).
\end{equation}
The electron distribution function \(f_{\mathrm e}\) determines the deviation from charge neutrality through \(M_0(f_{\mathrm e})\), which induces the electrostatic potential \(\phi\). The resulting electric field accelerates the electrons through the force \(q_{\mathrm e}\bm E\).

\subsection{Macroscopic System}

The macroscopic system can be derived in the usual way by taking moments of the
kinetic equation \eqref{eq:Vlasov} with respect to the test functions
\(\Phi(\bm v)\in\{1,m_{\mathrm e} \bm v,\frac12 m_{\mathrm e}|\bm v|^2\}\). This produces a
system of balance laws corresponding to particle number density, momentum density, and kinetic
energy density, respectively. We note that the kinetic energy density is not conserved
by itself, as it can be exchanged with the field. Under periodic boundary conditions, or, more generally, under boundary
conditions for which the corresponding surface integrals vanish, it follows
that the spatial integral of the total energy density is conserved. We define this total energy density by
\begin{equation*}
    e := e_{k,\mathrm{e}} + e_{p},
\end{equation*}
where \(e_{k,\mathrm{e}}\) is the electron kinetic energy density and \(e_p\) is
the electric field energy density, given by \(e_{p} = \frac{\epsilon_{0}}{2} |\bm{E}|^{2}\).

As the kinetic energy density is not conserved by itself, we instead use a
conservation law for the total energy density. In particular, it can be shown
that the time derivative of the potential energy density satisfies
\begin{equation}\label{eq:electric_filed_energy_equation}
    \partial_t e_p
    =
    \epsilon_0\,\nabla_{\bm{x}}\!\cdot\!\big(\phi\,\nabla_{\bm{x}}(\partial_t\phi)\big)
    -
    \nabla_{\bm{x}}\!\cdot(\bm{J}\phi)
    -
    \bm{J}\cdot\bm{E},
\end{equation}
where $\bm J$ is defined in \eqref{eq:current_density}. When this balance is combined with the kinetic energy equation, we obtain the
following conservation law for the total energy:
\begin{equation}\label{eq:total_energy_density_equation}
    \partial_t e
    +
    \nabla_{\bm{x}}\cdot
    \left(
        \frac{1}{2}
        \left\langle
            m_{\mathrm e}|\bm{v}|^2\bm{v} f_{\mathrm e}
        \right\rangle_{\bm{v}}
        -
        \epsilon_0\phi\nabla_{\bm{x}}(\partial_t\phi)
        +
        \bm{J}\phi
    \right)
    =
    0.
\end{equation}
Here, the quantity \(\partial_t\phi\) solves the Poisson equation
\begin{equation*}
    -\epsilon_0\Delta_{\bm{x}}(\partial_t\phi)
    =
    -\nabla_{\bm{x}}\cdot\bm{J}.
\end{equation*}
The macroscopic system can be written in the conservative form
\begin{equation}
    \label{eq:macro_sys}
    \partial_t \bm U
    +
    \nabla_{\bm x}\cdot \bm F(\bm U)
    =
    \bm S,
\end{equation}
where
\renewcommand{\arraystretch}{1.25}
\begin{equation}
    \label{eq:macro_sys_comp}
    \bm U :=
    \begin{pmatrix}
        n_{\mathrm e} \\
        m_{\mathrm e} n_{\mathrm e}\bm u_{\mathrm e} \\
        e
    \end{pmatrix},
    \quad
    \bm F(\bm U) :=
    \begin{pmatrix}
        \left\langle \bm v f_{\mathrm e} \right\rangle_{\bm v} \\
        \left\langle m_{\mathrm e}(\bm v \otimes \bm v) f_{\mathrm e} \right\rangle_{\bm v} \\
        \dfrac{1}{2}
        \left\langle
            m_{\mathrm e}|\bm v|^{2}\bm v f_{\mathrm e}
        \right\rangle_{\bm v}
        + \bm F_{e_p}
    \end{pmatrix},
    \quad
    \bm S :=
    \begin{pmatrix}
        0 \\
        \rho_{\mathrm e}\bm E \\
        0
    \end{pmatrix}.
\end{equation}
Here the electric field energy flux is
\begin{equation}\label{eq:electric_field_flux}
    \bm F_{e_p}
    :=
    -\epsilon_{0}\phi \nabla_{\bm x}(\partial_t\phi)
    +
    \bm J \phi.
\end{equation}

We refer the reader to the Supplementary Material for details regarding the derivation of \eqref{eq:electric_filed_energy_equation}--\eqref{eq:electric_field_flux}. The moment system \eqref{eq:macro_sys}--\eqref{eq:macro_sys_comp} identifies the conserved quantities of the VP system, but it is not closed. Rather than evolving this system independently, we use the low-rank kinetic scheme described in \Cref{sec:LoMaC} to supply a dynamical closure. The resulting macroscopic update provides the physically consistent conserved quantities, which are then imposed on the low-rank approximation.

In later sections, we will make use of a connection between the conserved variables $\bm U$ and the raw moments $\bm M$. Specifically, the raw moments can be recovered from the conserved quantities using the relations
\begin{equation}
\label{eq:M from U}
    M_{0} = U_{0}, \qquad M_{1,\mu} = \frac{1}{m_{e}} U_{1,\mu}, \qquad M_{2} = \frac{2}{m_{e}}\left(U_{2} - e_{p}\right).
\end{equation}
Note that the last relation assumes that the electric field energy density \(e_{p}\) is computed from the solution of Poisson's equation, with charge density \eqref{eq:charge_density} determined by \(U_{0}\).

\section{Numerical Method}\label{sec:scheme}

In this section, we first introduce a SLAR scheme with high-dimensional tensor interpolations for the linear advection equation and for the VP system in 
\Cref{sec:SL_FD_scheme}. Then we present the adaptive-weight LoMaC projection and the implicit solver for the coupling of the kinetic SLAR scheme with the
macroscopic solver in \Cref{sec:LoMaC}. 

\subsection{SLAR Discretization}
\label{sec:SL_FD_scheme}

Consider the linear advection equation
\begin{equation}\label{eq:advection_ddim}
    \partial_t f(\bm{x},t) 
    + \bm{a}(\bm{x},t)\cdot\nabla_{\bm{x}} f(\bm{x},t) 
    = 0,
\end{equation}
where $\bm{x} = (x^{(1)},\dots,x^{(d)}) \in \mathbb{R}^d$ and 
$\bm{a}(\bm{x},t)$ denotes a prescribed velocity field. 
It is well known that the solution of \eqref{eq:advection_ddim} remains invariant along characteristic trajectories, which forms the foundation of the SL approach.

We discretize the computational domain by a tensor product of uniform meshes in each coordinate direction. 
For $\mu = 1,\dots,d$, the grids are defined by
\begin{equation*}
    a^{(\mu)} 
    = x^{(\mu)}_{\frac{1}{2}} 
    < x^{(\mu)}_{\frac{3}{2}} 
    < \cdots 
    < x^{(\mu)}_{N_{\mu}+\frac{1}{2}} 
    = b^{(\mu)}.
\end{equation*}
The corresponding cells, cell centers, and mesh spacing are given by
\begin{equation*}
    I^{(\mu)}_{i_{\mu}} 
    = \bigl[x^{(\mu)}_{i_{\mu}-\frac{1}{2}},\, 
            x^{(\mu)}_{i_{\mu}+\frac{1}{2}}\bigr], 
    \qquad 
    x^{(\mu)}_{i_{\mu}} 
    = \frac{1}{2}
      \bigl(
      x^{(\mu)}_{i_{\mu}-\frac{1}{2}}
      + x^{(\mu)}_{i_{\mu}+\frac{1}{2}}
      \bigr),\qquad \Delta x^{(\mu)} 
    = x^{(\mu)}_{i_{\mu}+\frac{1}{2}}
      - x^{(\mu)}_{i_{\mu}-\frac{1}{2}}.
\end{equation*}
Using multi-index notation, we write a generic cell as
\(
I_{i_1,\dots,i_d}
=
I^{(1)}_{i_1}
\times
I^{(2)}_{i_2}
\times
\cdots
\times
I^{(d)}_{i_d}\),
and its corresponding cell center as
\(\mathbf{x}_{i_1,\dots,i_d}
=
\bigl(
x^{(1)}_{i_1},\dots,
x^{(d)}_{i_d}
\bigr).
\)

A backward SL--FD update can be obtained by tracing characteristics backward in time using a high-order Runge-Kutta (RK) integrator. Then, the value of $f$ at a grid point 
$\mathbf{x}_{i_1,\dots,i_d}$ at time $t^{n+1}$ is updated according to
\begin{equation}
    f(\mathbf{x}_{i_1,\dots,i_d}, t^{n+1}) =
    f(\mathbf{x}^\star, t^{n}),
    \label{eq:SL_basic_formulation}
\end{equation}
where $\mathbf{x}^\star = (x^{(1),\star},\dots,x^{(d),\star})$ 
is the foot of the local characteristic passing through 
$\mathbf{x}_{i_1,\dots,i_d}$ at time $t^{n+1}$. To evaluate $f(\mathbf{x}^\star, t^n)$ in \eqref{eq:SL_basic_formulation},
we use a tensor-product quadratic interpolation from a local $3^d$-point stencil. The interpolant lies in $\mathbb{Q}_2 = \bigotimes_{\mu=1}^d \mathbb{P}_2$, achieving third-order local accuracy for smooth solutions.

Assume that $\mathbf{x}^\star$ lies in the cell $I_{j_1,\dots,j_d}$. 
For each direction $\mu = 1,\dots,d$, select the three-point stencil index set
$\mathcal{J}_\mu = \{j_\mu - 1,\, j_\mu,\, j_\mu + 1\}$,
and let $\mathcal{J} = \mathcal{J}_1 \times \cdots \times \mathcal{J}_d$.
Define the normalized coordinate
\begin{equation*}
    \xi^{(\mu)} := \frac{x^{(\mu),\star} - x^{(\mu)}_{j_\mu}}{\Delta x^{(\mu)}},
\end{equation*}
and let $\mathbf{w}(\xi^{(\mu)}) \in \mathbb{R}^3$ be the 1D quadratic Lagrange interpolation weight vector 
\begin{equation}
    \label{eq:1d_quadratic_interp_weight}
    \mathbf{w}(\xi^{(\mu)})
    =
    \Bigl(
        \tfrac{1}{2}\xi^{(\mu)}(\xi^{(\mu)}-1),\;
        1 - (\xi^{(\mu)})^2,\;
        \tfrac{1}{2}\xi^{(\mu)}(\xi^{(\mu)}+1)
    \Bigr).
\end{equation}
The rank-one weight tensor $\mathbf{W}(\bm\xi) := \bigotimes_{\mu=1}^d \mathbf{w}(\xi^{(\mu)})$, with $\bm\xi = (\xi^{(1)},\dots,\xi^{(d)})$, encodes the full $d$-dimensional interpolation weights. Let $\mathbf{F}^n \in \mathbb{R}^{N_1 \times \cdots \times N_d}$ denote the global nodal tensor
with entries $F^n_{i_1,\dots,i_d} = f(\mathbf{x}_{i_1,\dots,i_d}, t^n)$,
and let $\mathbf{F}^n_{\mathcal{J}}$ denote its $3^d$ local block extracted at the stencil $\mathcal{J}$. The new SL--FD update is given by
\begin{equation}\label{eq:tp_interp}
    f(\mathbf{x}^\star,t^n)\approx \sum_{i_1=1}^{3}\sum_{i_2=1}^{3}\cdots\sum_{i_d=1}^{3}
    (F^n_{\mathcal{J}})_{i_1i_2\cdots i_d}\,
    W_{i_1i_2\cdots i_d} 
    := \left\langle
    \mathbf{F}^n_{\mathcal J},
    \mathbf{W}
    \right\rangle,
\end{equation}
where $\langle\cdot,\cdot\rangle$ is the Frobenius inner product. A direct evaluation of \eqref{eq:tp_interp} requires $\mathcal{O}(3^d)$ operations. When $\mathbf{F}^n$ is given in a low-rank format (e.g., TT, Tucker, HT, etc.),
the direct evaluation of \eqref{eq:tp_interp} can be replaced by
a sequence of structured contractions.
We briefly describe the procedure in the HT setting, but note that other formats are analogous.
Let $\mathcal{F}^n$ be an HT approximation of $\mathbf{F}^n$.
At each leaf node $\mu$, let $\mathbf{U}^{(\mu)} \in \mathbb{R}^{N_\mu \times r_\mu}$ be the leaf basis matrix of $\mathcal{F}^n$, and let $\mathbf{U}^{(\mu)}_{\mathcal{J}_\mu} \in \mathbb{R}^{3 \times r_\mu}$ be the submatrix formed by the three rows indexed by $\mathcal{J}_\mu$. We evaluate the interpolated leaf basis at $x^{(\mu),\star}$ via quadratic interpolation,
\begin{equation*}
    \mathbf{c}^{(\mu)}
    :=
    \mathbf{w}(\xi^{(\mu)})\,\mathbf{U}^{(\mu)}_{\mathcal{J}_\mu}
    \in \mathbb{R}^{1\times r_\mu},
    \qquad \mu=1,\dots,d.
\end{equation*}
The inner product in \eqref{eq:tp_interp} is then recovered by contracting
$\{\mathbf{c}^{(\mu)}\}_{\mu=1}^d$ with the non-leaf transfer tensors of $\mathcal{F}^n$ recursively up to the root of the HT dimension tree. With uniform HT rank $r$, this costs $\mathcal{O}(d\,r^3)$,
a significant reduction compared to $\mathcal{O}(d^3 r^3)$ for the reconstruction in \cite{zheng2025semihighD}. Although we use third-order interpolation in this work, this idea extends in a natural way to higher-order interpolation.

Next, we show that the proposed SL--FD scheme is unconditionally stable when applied to the $d$-dimensional constant-coefficient linear advection equation.
\begin{theorem}[Unconditional \(\ell^2\)-stability] \label{thm:uncond_stab} 
Consider the constant-coefficient linear advection equation 
\[ \partial_t f+\bm a\cdot\nabla_{\bm x}f=0 
\] 
on a periodic uniform Cartesian grid. Then the SL--FD scheme with tensor-product quadratic interpolation weights \eqref{eq:1d_quadratic_interp_weight} is unconditionally stable in the discrete \(\ell^2\)-norm. In particular, for any \(\Delta t>0\), we have
\[ \|\mathbf F^{n+1}\|_{\ell^2} \le \|\mathbf F^n\|_{\ell^2}, \qquad n\ge 0. 
\] 
\end{theorem}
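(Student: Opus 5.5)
We will use von Neumann (discrete Fourier) analysis, exploiting the fact that for constant $\bm a$ the foot of every characteristic is $\mathbf x^\star=\mathbf x_{i_1,\dots,i_d}-\bm a\Delta t$. This shift is the same for all grid points. Write $a^{(\mu)}\Delta t/\Delta x^{(\mu)} = k_\mu + \theta_\mu$, where $k_\mu\in\mathbb Z$ is the integer cell shift.

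\textbf{Choice of stencil parameter.} We choose the stencil cell $j_\mu=i_\mu-k_\mu$ so that $-\xi^{(\mu)}=\theta_\mu\in[-\tfrac12,\tfrac12]$. Then for every grid point the same stencil offsets and the same rank-one weight tensor $\mathbf W(\bm\xi)$ are used. The update \eqref{eq:tp_interp} is therefore a translation-invariant convolution on the periodic grid:
\[
F^{n+1}_{\mathbf i}=\sum_{\mathbf m\in\{-1,0,1\}^d}\Big(\prod_{\mu=1}^d w_{m_\mu}(\xi^{(\mu)})\Big)F^n_{\mathbf i-\mathbf k+\mathbf m}.
\]

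\textbf{Fourier diagonalization.} Inserting the discrete Fourier mode $F^n_{\mathbf i}=\exp(\mathrm i\sum_\mu i_\mu\beta_\mu)$, with $\beta_\mu=2\pi p/N_\mu$, gives an amplification factor that factorizes:
\[
G(\bm\beta)=\prod_{\mu=1}^d e^{-\mathrm i k_\mu\beta_\mu}\,g(\xi^{(\mu)},\beta_\mu),\qquad g(\xi,\beta)=\tfrac12\xi(\xi-1)e^{-\mathrm i\beta}+(1-\xi^2)+\tfrac12\xi(\xi+1)e^{\mathrm i\beta}.
\]
The discrete Fourier modes form an orthogonal basis, so Parseval's identity reduces the claim $\|\mathbf F^{n+1}\|_{\ell^2}\le\|\mathbf F^n\|_{\ell^2}$ to showing $|G|\le1$. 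By the product structure and $|e^{-\mathrm i k_\mu\beta_\mu}|=1$, this in turn reduces to the one-dimensional bound $|g(\xi,\beta)|\le1$ for $|\xi|\le\tfrac12$ and all $\beta$. This tensor-product reduction is what makes the $d$-dimensional result follow from the 1D one, and it holds for any $\Delta t$ because large time steps only enter through the unimodular integer shift.

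\textbf{The one-dimensional bound (main step).} The weights sum to one. Using this, we compute
\[
g=1-\xi^2(1-\cos\beta)+\mathrm i\,\xi\sin\beta.
\]
With $s=1-\cos\beta\in[0,2]$ and $\sin^2\beta=s(2-s)$,
\[
|g|^2=(1-\xi^2 s)^2+\xi^2 s(2-s)=1-\xi^2 s^2(1-\xi^2).
\]
This quantity is at most $1$ whenever $|\xi|\le1$. Our choice of stencil gives $|\xi|\le\tfrac12$, so the bound holds with room to spare. This algebra is the only real computation.

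\textbf{Anticipated subtlety.} The delicate point is that the stencil must be centered at the cell containing $\mathbf x^\star$, i.e.\ $|\xi^{(\mu)}|\le\tfrac12$, and not at an arbitrary nearby node. For $|\xi|>1$ the factor $1-\xi^2$ turns negative and the scheme would be unstable. I will state explicitly that the cell-centered choice in the scheme guarantees $|\xi^{(\mu)}|\le\tfrac12$. If a characteristic foot lies exactly on a cell edge, either adjacent cell gives $|\xi|=\tfrac12$, so the bound still applies.
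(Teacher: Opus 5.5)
Your proposal is correct and follows essentially the same route as the paper: von Neumann analysis giving $|g|^2 = 1-\xi^2(1-\xi^2)(1-\cos\beta)^2$ in 1D, an integer cell shift that contributes only a unimodular phase, factorization of the amplification factor across dimensions, and Parseval's identity. The only difference is cosmetic: you build the integer shift into the stencil choice from the start, whereas the paper first treats $|\lambda|\le 1/2$ and then extends to arbitrary CFL numbers.
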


\begin{proof}
The result follows from a standard von Neumann analysis. The one-dimensional
quadratic interpolation update has amplification factor bounded by one for any
CFL number, and the tensor-product interpolation gives a multidimensional
amplification factor that factorizes into the product of the one-dimensional
factors. The details of the proof are provided in \cref{app:stability_proof}.
\end{proof}

The SL--FD solver developed above enables localized evaluation of solution entries at the next time level through high-dimensional characteristic tracing and tensor-product reconstruction. This SL--FD solver provides the local entry evaluations required by the HTACA
compression strategy of \cite{zheng2025semihighD}, resulting in a non-conservative SLAR method for high-dimensional kinetic equations. For the linear advection equation \eqref{eq:advection_ddim}, given an HT representation $\mathcal{F}^n$ at time $t^n$, the solution is advanced by
\[
\mathcal{F}^{n+1} 
=
\mathrm{SLAR}\!\left(\mathcal{F}^n; \bm{a}, \Delta t, \varepsilon_{\text{Base}}\right),
\]
where the local SL--FD solver provides the pointwise values required by the SL formulation, and HTACA constructs an HT approximation of the updated tensor from these localized entries. The overall accuracy and rank adaptivity are controlled by the baseline tolerance $\varepsilon_{\text{Base}}$.

For the nonlinear Vlasov--Poisson system, we employ the third-order RK exponential integrator (RKEI) scheme \cite{celledoni2003commutator}, following \cite{cai2021high,zheng2025semihighD}, which decomposes the characteristic evolution into three linear advection subproblems with frozen velocity fields at each stage. Each subproblem is an advection equation, which can be advanced using SLAR. This leads to the abstract update
\begin{equation}
    \mathcal{F}^{n+1}
    =
    \mathrm{SLAR}_{\mathrm{VP}}
    \bigl(
    \mathcal{F}^n;\,
    \Delta t,\,
    \varepsilon_{\mathrm{Base}}
    \bigr).
    \label{eq: slarvp}
\end{equation}
Here, \(\mathrm{SLAR}_{\mathrm{VP}}\) denotes one full SLAR step for the VP system with RKEI time integration. The self-consistent electric field is computed in HT format using the Poisson
solver from \cite{zheng2025semihighD}, which uses the FFT. We refer to that work for implementation details of the field solver and rank-adaptive compression.

\subsection{Implicit LoMaC Correction}\label{sec:LoMaC}

The distribution function obtained from the SLAR update in
\eqref{eq: slarvp} does not, in general, conserve mass, momentum, or energy. This loss of conservation is caused by several factors, including the nonlinear SL discretization as well as the tensor truncation used to compress the adaptive-rank representation.

To enforce local conservation of the macroscopic densities, we introduce an implicit LoMaC framework equipped with an adaptive velocity space weight. The framework has two main components. First, we construct a dynamic weight function that captures the dominant velocity space structure of the distribution and use it to define the weighted Hilbert space in which the LoMaC projection is performed. We motivate this choice in the continuous setting, where the construction can be interpreted in terms of leading velocity modes, and then describe the modifications needed for the fully discrete algorithm. Second, as in our previous work \cite{sands2025adaptive}, we advance the macroscopic moment equations implicitly. This allows the macroscopic update to remain compatible with the large time steps used by the SLAR kinetic solver, including time steps far beyond those permitted by an Eulerian CFL restriction.

\subsubsection{Adaptive-Weight Projection}
\label{sec:AWLoMaC_functional}

In \cite{sands2025adaptive}, we developed an implicit LoMaC framework for the BGK equation in which the correction to the distribution function is constructed using a local Maxwellian ansatz. This ansatz encodes the relevant macroscopic moments and provides the correct decay as \(|v|\to\infty\). For collisional models, this is a natural choice because the collision operator drives the distribution toward a local Maxwellian. For collisionless Vlasov dynamics, however, no analogous relaxation mechanism is present, and the distribution may remain far from local equilibrium. Consequently, a Maxwellian ansatz is not generally well adapted to the non-Maxwellian velocity space structures that arise in Vlasov simulations. To address this issue, we define the weight function adaptively using the leading singular function in the velocity direction. This choice captures the dominant velocity space profile of the distribution. In the continuous setting, the nonnegativity of the leading singular function follows from the positivity of compact integral operators, e.g., the
Krein--Rutman theorem. In the discrete setting, this reduces to the Perron--Frobenius theorem for nonnegative matrices.

Consider a reference distribution \(f_{\mathrm e}(\bm{x},\bm{v},t) > 0\) at a fixed time \(t\). 
For each velocity component \(v^{(\mu)}\), define
\[
\bm{y}^{(\mu)} := (\bm{x},\bm{v}_{\neq \mu})
\in
\Omega_{\bm{x}} \times \Omega_{\bm{v}_{\neq \mu}},
\qquad
\Omega_{\bm{v}_{\neq \mu}}
=
\underset{\nu\neq \mu}{\times}\Omega_{v^{(\nu)}},
\]
where \(\bm{v}_{\neq \mu}\) denotes all velocity components except \(v^{(\mu)}\).
Without loss of generality, we drop the superscript \((\mu)\) and the time variable \(t\) whenever no confusion arises.

We define the Hilbert--Schmidt operator
\[
\mathcal T_{\mu} :
L^2(\Omega_{\bm{y}})
\to
L^2(\Omega_{v^{(\mu)}}),\quad g
\mapsto (\mathcal{T}_\mu g)(v)=
\int_{\Omega_{\bm{y}}}
f_{\mathrm{e}}(\bm{y},v)\,
g(\bm{y})\, d\bm{y}
\]
and the associated self-adjoint operator
\[
\mathcal T_\mu \mathcal T_\mu^* :
L^2(\Omega_{v^{(\mu)}}) \to L^2(\Omega_{v^{(\mu)}}),\quad u \mapsto
(\mathcal T_\mu \mathcal T_\mu^* u)(v)
=
\int_{\Omega_{v^{(\mu)}}}
K_{\mu}(v,v')\,u(v')\,dv',
\]
where the kernel \(K_\mu\) is defined as
\(
K_\mu(v,v')
:=
\int_{\Omega_{\bm{y}}}
f_{\mathrm{e}}(\bm{y}, v)\,
f_{\mathrm{e}}(\bm{y},v')\,d\bm{y}.
\)

The leading eigenfunction of \(\mathcal{T}_\mu\mathcal{T}_{\mu}^*\) admits the variational characterization
\[
u_1^{(\mu)}
=
\arg\max_{\|u\|_{L^2(\Omega_{v^{(\mu)}})}=1}
\left\langle \mathcal T_\mu \mathcal T_\mu^* u,\, u \right\rangle
=
\arg\max_{\|u\|_{L^2(\Omega_{v^{(\mu)}})}=1}
\|\mathcal T_\mu^* u\|_{L^2(\Omega_{\bm{y}})}^2.
\]
Equivalently,
\[
\|\mathcal T_\mu^* u\|_{L^2(\Omega_{\bm{y}})}^2
=
\int_{\Omega_{\bm{y}}}
\left|
\int_{\Omega_{v^{(\mu)}}}
f_{\mathrm{e}}(\bm{y},v)\,u(v)\,dv
\right|^2
d\bm{y}.
\]
This characterization shows that \(u_1^{(\mu)}\) is the first principal mode of the family of 
\(v^{(\mu)}\)-slices of \(f_{\mathrm e}\). Indeed, for each fixed \(\bm{y}\), the inner integral \(\int_{\Omega_{v^{(\mu)}}}
f_{\mathrm{e}}(\bm{y},v)\,u(v)\,dv\) represents the projection coefficient of the slice 
\(f_{\mathrm{e}}(\bm{y},\cdot)\) along the unit direction \(u\),
and the maximization selects the unit function \(u\) that maximizes the total squared projection
over all \(\bm{y}\). Therefore, \(u_1^{(\mu)}\) provides the optimal one-dimensional profile that captures,
in an \(L^2\)-averaged sense, the dominant shape and exponential decay of \(f_{\mathrm{e}}\) in the \(v^{(\mu)}\)-direction. 

We define a weight function $\omega(\bm v)$ as
\[
\omega(\bm v)
:=
\prod_{\mu=1}^{d_v}
u_1^{(\mu)}\bigl(v^{(\mu)}\bigr).
\]
Since \(f_{\mathrm e}>0\), the kernel \(K_\mu(v,v')\) is strictly positive, and hence it follows from the Krein--Rutman theorem that each leading eigenfunction \(u_1^{(\mu)}\) can be chosen to be strictly positive. Consequently, \(\omega(\bm v)>0\) on \(\Omega_{\bm v}\).
We therefore define the weighted inner product
\begin{equation*}
\langle g,h \rangle_{\omega^{-1}}
=
\int_{\Omega_{\bm v}}
g(\bm v)\,h(\bm v)\,\omega(\bm v)^{-1}\,d\bm v,
\end{equation*}
and the associated weighted Hilbert space \(L^2_{\omega^{-1}}(\Omega_{\bm v})\). Since \(u_1^{(\mu)}\) captures the dominant profile of \(f_{\mathrm e}\) in each \(v^{(\mu)}\),
the weight \(\omega(\bm v)\) reflects the principal velocity structure of \(f_{\mathrm e}\).
Moreover, under the exponential decay of \(f_{\mathrm e}\), the leading eigenfunctions
\(u_1^{(\mu)}\) inherit exponential decay in \(v^{(\mu)}\), and hence the weight
\(\omega(\bm v)\) also decays exponentially. Consequently,
\[
\mathcal{N}(\omega)
=
\mathrm{span}
\Big\{
\omega(\bm{v}),
\;
v^{(1)}\,\omega(\bm{v}),\dots,
v^{(d_v)}\,\omega(\bm{v}),
\;
|\bm{v}|^2\,\omega(\bm{v})
\Big\}
\subset
L^2_{\omega^{-1}}(\Omega_{\bm{v}}).
\]
Let \(\mathcal{P}_{\mathcal{N}(\omega)}\) denote the orthogonal projection onto
\(\mathcal{N}(\omega)\) with respect to \(\langle \cdot,\cdot \rangle_{\omega^{-1}}\).
We define the adaptive-weight LoMaC projection as the phase-space operator
\[
\mathcal P:
L^2(\Omega_{\bm{x}})\widehat{\otimes}L^2_{\omega^{-1}}(\Omega_{\bm{v}})
\;\to\;
L^2(\Omega_{\bm{x}})\widehat{\otimes}\mathcal{N}(\omega),
\]
given by
\(
\mathcal P
=
\mathcal I_{L^2(\Omega_{\bm{x}})}
\otimes
\mathcal{P}_{\mathcal{N}(\omega)}.
\)
Here \(\widehat{\otimes}\) denotes the Hilbert tensor product (i.e., the completion of the algebraic tensor product), and \(\mathcal I_{L^2(\Omega_{\bm{x}})}\) is the identity operator on \(L^2(\Omega_{\bm{x}})\). Equivalently, \(\mathcal P\) applies the weighted projection pointwise in \(\bm x\). The following theorem establishes the main properties of the weighted projection.

\begin{theorem}[Adaptive-weight LoMaC projection]
\label{thm:adaptive_LoMaC_functional}

If
\(
f\in L^2(\Omega_{\bm{x}})\widehat{\otimes}L^2_{\omega^{-1}}(\Omega_{\bm{v}})
\),
then the following statements hold:

\begin{enumerate}

\item (Macroscopic moment preservation)
The projection \(\mathcal P f\) preserves the velocity moments associated with
\(
\psi(\bm v)\in\{1,\,v^{(1)},\ldots,v^{(d_v)},\,|\bm v|^2\},
\)
that is,
\[
\int_{\Omega_{\bm v}}
(\mathcal P f)(\bm x,\bm v)\,\psi(\bm v)\,d\bm v
=
\int_{\Omega_{\bm v}}
f(\bm x,\bm v)\,\psi(\bm v)\,d\bm v.
\]

\item (Low-dimensional velocity structure)
\[
\mathcal P f
\in
L^2(\Omega_{\bm{x}})
\,\widehat{\otimes}\,
\mathcal{N}(\omega)
\subset
L^2(\Omega_{\bm{x}})\widehat{\otimes}L^2_{\omega^{-1}}(\Omega_{\bm{v}}),
\qquad
\dim \mathcal{N}(\omega) = d_v + 2.
\]

\item (Velocity decay structure)
If \(\Omega_{\bm{v}}=\mathbb{R}^{d_v}\) and \(\omega(\bm{v})\) decays exponentially as \(|\bm{v}|\to\infty\),
then \(\mathcal P f\) inherits the same exponential decay in \(\bm{v}\),
up to at most quadratic polynomial factors.

\end{enumerate}
\end{theorem}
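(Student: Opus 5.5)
The key identity behind all three statements is that $\omega^{-1}$ turns velocity moments into weighted inner products. For every $\psi\in\{1,v^{(1)},\ldots,v^{(d_v)},|\bm v|^2\}$ and every $g\in L^2_{\omega^{-1}}(\Omega_{\bm v})$ we have
\[
\int_{\Omega_{\bm v}} g(\bm v)\psi(\bm v)\,d\bm v=\langle g,\psi\omega\rangle_{\omega^{-1}} .
\]
This integral is finite, since $\psi\omega\in L^2_{\omega^{-1}}$: indeed $\int\psi^2\omega\,d\bm v<\infty$ because $\omega$ decays exponentially (or $\Omega_{\bm v}$ is bounded). The functions $\psi\omega$ are exactly the generators of $\mathcal N(\omega)$.

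\textbf{Item 1.} I would work pointwise in $\bm x$. For a.e.\ $\bm x$, the slice $f(\bm x,\cdot)$ lies in $L^2_{\omega^{-1}}$; this follows from the identification $L^2(\Omega_{\bm x})\widehat{\otimes}L^2_{\omega^{-1}}\cong L^2(\Omega_{\bm x};L^2_{\omega^{-1}}(\Omega_{\bm v}))$ together with Fubini. Since $\mathcal P$ acts as $\mathcal P_{\mathcal N(\omega)}$ on each slice, orthogonality of the projection gives $f(\bm x,\cdot)-(\mathcal Pf)(\bm x,\cdot)\perp_{\omega^{-1}}\mathcal N(\omega)$. Testing this against $\psi\omega\in\mathcal N(\omega)$ and applying the identity above yields equality of the moments.

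\textbf{Item 2.} Membership of $\mathcal Pf$ in $L^2(\Omega_{\bm x})\widehat\otimes\mathcal N(\omega)$ is immediate from the definition of $\mathcal P$. The substantive claim is $\dim\mathcal N(\omega)=d_v+2$, that is, linear independence of the generators. Suppose $(c_0+\bm c\cdot\bm v+c_2|\bm v|^2)\,\omega(\bm v)=0$ a.e. Because $\omega>0$ on $\Omega_{\bm v}$ (Krein--Rutman, as established above), the quadratic polynomial must vanish a.e.\ on $\Omega_{\bm v}$. Since $\Omega_{\bm v}$ is a product of nondegenerate intervals and so has nonempty interior, the polynomial is identically zero, and all coefficients vanish. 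Equivalently, the Gram matrix $G_{ij}=\int\psi_i\psi_j\omega\,d\bm v$ is positive definite. Using $G$, I would also write the projection explicitly as
\[
\mathcal Pf=\omega\sum_j\alpha_j(\bm x)\psi_j,\qquad \alpha(\bm x)=G^{-1}\bm M(f)(\bm x).
\]
This formula is useful for item 3, and it shows $\alpha\in L^2(\Omega_{\bm x})$.

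\textbf{Item 3.} From the explicit formula, $(\mathcal Pf)(\bm x,\bm v)=p_{\bm x}(\bm v)\,\omega(\bm v)$ with $p_{\bm x}$ a polynomial of degree at most two. Hence if $\omega(\bm v)\le Ce^{-\lambda|\bm v|}$, then $|\mathcal Pf|\le C|\alpha(\bm x)|(1+|\bm v|+|\bm v|^2)e^{-\lambda|\bm v|}$. This is the claimed decay, up to quadratic factors.

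\textbf{Main obstacle.} I expect the functional-analytic bookkeeping to be the main obstacle. One must check that $\psi\omega$ lies in $L^2_{\omega^{-1}}$ and that $f(\bm x,\cdot)\psi$ is integrable, which follows from Cauchy--Schwarz in the weighted space: $|\int f\psi|\le\|f\|_{\omega^{-1}}(\int\psi^2\omega)^{1/2}$. One must also justify that the tensor-product operator acts slicewise, via the Bochner-space identification. None of this requires new ideas. The rest is routine.
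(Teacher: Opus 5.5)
Your proposal is correct and follows the same route as the paper. Item 1 uses the identity $\int g\psi\,d\bm v=\langle g,\psi\omega\rangle_{\omega^{-1}}$ with $\psi\omega\in\mathcal N(\omega)$ together with orthogonality of the slicewise projection, item 2 follows from the definition of $\mathcal P$, and item 3 follows from the representation $\mathcal Pf=\omega\sum_k c_k(\bm x)\psi_k(\bm v)$. Your extra steps are also correct and only add rigor: the linear-independence argument for $\dim\mathcal N(\omega)=d_v+2$, which the paper treats as immediate, and the Gram-matrix formula $\alpha=G^{-1}\bm M(f)$, which is the continuous analogue of the supplement's weighted Gram--Schmidt construction.
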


\begin{proof}
We divide the proof into three steps.

\medskip
\noindent
\textit{Step 1: Moment representers and preservation.}
Let \(\psi(\bm{v}) \in \{1, v^{(1)}, \ldots, v^{(d_v)}, |\bm{v}|^2\}\) and define
\(\psi^{\omega}(\bm{v}) := \psi(\bm{v})\,\omega(\bm{v}) \in \mathcal{N}(\omega)\).
Since
\[
\langle g,h\rangle_{\omega^{-1}}
=
\int_{\Omega_{\bm{v}}} g(\bm{v})h(\bm{v})\,\omega(\bm{v})^{-1}\,d\bm{v},
\]
we have
\[
\int_{\Omega_{\bm{v}}}
(\mathcal I-\mathcal P)f(\bm{x},\bm{v})\,
\psi(\bm{v})\, d\bm{v}
=
\left\langle
(\mathcal I-\mathcal P)f(\bm{x},\cdot),
\psi^{\omega}
\right\rangle_{\omega^{-1}}.
\]
Here \(\mathcal{I}\) denotes the identity operator on the phase space.
Since \(\psi^{\omega} \in \mathcal{N}(\omega)\) and
\(
\mathcal P
=
\mathcal I_{L^2(\Omega_{\bm{x}})}\otimes
\mathcal{P}_{\mathcal{N}(\omega)},
\)
the definition of orthogonal projection in
\(L^2_{\omega^{-1}}(\Omega_{\bm{v}})\) implies
\[
\left\langle
(\mathcal I-\mathcal P)f(\bm{x},\cdot),
\psi^{\omega}
\right\rangle_{\omega^{-1}}
=0.
\]
Hence,
\[
\int_{\Omega_{\bm{v}}}
(\mathcal P f)(\bm{x},\bm{v})\,
\psi(\bm{v})\, d\bm{v}
=
\int_{\Omega_{\bm{v}}}
f(\bm{x},\bm{v})\,
\psi(\bm{v})\, d\bm{v}.
\]

\medskip
\noindent
\textit{Step 2: Range characterization.}
This follows directly from the definition of \(\mathcal{P}\).

\medskip
\noindent
\textit{Step 3: Velocity decay structure.}
Since
\(
\mathcal P f \in L^2(\Omega_{\bm{x}})\widehat{\otimes}\mathcal{N}(\omega),
\)
it admits the representation
\begin{equation*}
(\mathcal P f)(\bm{x},\bm{v})
=
\sum_{k=1}^{d_v+2} c_k(\bm{x})\,\psi_k(\bm{v})\,\omega(\bm{v}),
\qquad
\psi_k(\bm{v}) \in \{1, v_1, \ldots, v_{d_v}, |\bm{v}|^2\}.
\end{equation*}
In particular, if \(\Omega_{\bm{v}}=\mathbb{R}^{d_v}\) and \(\omega(\bm{v})\) decays exponentially fast as \(|\bm{v}|\to\infty\),
then the decay of \(\mathcal P f\) as \(|\bm{v}|\to\infty\)
is governed by \(\omega(\bm{v})\), up to the polynomial factors
\(1, v_1, \ldots, v_{d_v}, |\bm{v}|^2\).
\end{proof}

\begin{remark}[Adaptive velocity weight] 
The weight \(\omega(\bm v)\) is chosen to reflect the dominant velocity space structure of the distribution function at the current time step. In contrast to the original LoMaC projection~\cite{guo2024local}, where the weight function is prescribed a priori, the present construction determines the weight directly from the numerical solution. In particular, the one-dimensional factors defining \(\omega(\bm v)\) are obtained from leading velocity-mode information extracted from the current phase-space distribution. Since such a distribution is available at every time step, the projection space \(\mathcal N(\omega)\) adapts dynamically to the evolving velocity profile. For this reason, we refer to \(\omega\) as an \emph{adaptive velocity weight}. 
\end{remark}

\begin{remark}[Positivity of the weight function]
To ensure well-posedness of the weighted inner product
\(\langle\cdot,\cdot\rangle_{\omega^{-1}}\), the weight
\(\omega(\bm v)\) must be strictly positive. In the continuous construction,
this positivity is guaranteed under suitable positivity assumptions on the
reference distribution \(f_{\mathrm e}\). In the discrete setting, strict positivity of the numerical distribution is difficult to enforce. Therefore, in practice, we take the absolute value of each discrete velocity factor \(u_1^{(\mu)}\) and replace any entries below a small floor parameter \(\delta>0\) with \(\delta\). This guarantees a strictly positive discrete weight function; see the Supplementary Material for details.
\end{remark}

\subsubsection{Discrete Projection}\label{sec:AWLoMaC_discrete}

We now describe the adaptive-weight LoMaC projection in the fully discrete setting.
Let
\[
\mathcal F^n
\in
\mathbb R^{N_{x^{(1)}}\times\cdots\times N_{x^{(d_x)}}
\times
N_{v^{(1)}}\times\cdots\times N_{v^{(d_v)}}}
\]
denote the discrete phase-space tensor at time level $t^n$. At each time step, an intermediate distribution
$\mathcal F^{n+1,\star}$ is first obtained from the non-conservative SLAR solver.
From $\mathcal F^{n+1,\star}$, we construct an adaptive \emph{discrete} velocity weight
\[
\mathbf{\Omega}^\star
\in
\mathbb R^{N_{v^{(1)}}\times\cdots\times N_{v^{(d_v)}}},
\]
which is kept fixed during the LoMaC correction at time level $n+1$.
In particular, $\mathbf{\Omega}^\star$ is chosen in a separable (rank-one) tensor form; the explicit construction is deferred to Section~2 in the Supplementary Material.

Let $\mathcal{N}(\mathbf{\Omega}^\star)$ denote the discrete velocity subspace
\[
\mathcal{N}(\mathbf{\Omega}^\star)
=
\mathrm{span}
\big\{
\mathbf{\Omega}^\star,
\;
\mathbf{V}_1 * \mathbf{\Omega}^\star,
\;
\cdots,
\;
\mathbf{V}_{d_v} * \mathbf{\Omega}^\star,
\;
 \mathbf{V}^{* 2} * \mathbf{\Omega}^\star
\big\}
\subset
\mathbb R^{N_{v^{(1)}}\times\cdots\times N_{v^{(d_v)}}},
\]
whose dimension is $d_v+2$.
Here $*$ denotes the Hadamard (entrywise) product. For each $\mu = 1,\ldots,d_v$, $\mathbf{V}_\mu\in\mathbb{R}^{N_{v^{(1)}}\times\cdots\times N_{v^{(d_v)}}}$ denotes the discrete samples of $v^{(\mu)}$, i.e. the $\mu$-fiber of $\mathbf{V}_\mu$ equals the grid vector $\mathbf{v}^{(\mu)}\in\mathbb{R}^{N_{v^{(\mu)}}}$. Furthermore, $\mathbf{V}^{* 2} = \sum_{\mu=1}^{d_v}\mathbf{V}_\mu*\mathbf{V}_\mu$ represents the discrete samples of $|\bm{v}|^2$.
We define the discrete projection
\[
\mathcal P^\star
=
\mathcal I_{\mathbb R^{N_{x^{(1)}}\times\cdots\times N_{x^{(d_x)}}}}
\otimes
\mathcal P_{\mathcal{N}(\mathbf{\Omega}^\star)},
\]
where
\(\mathcal P_{\mathcal{N}(\mathbf{\Omega}^\star)}\)
denotes the weighted orthogonal projection onto
\(\mathcal{N}(\mathbf{\Omega}^\star)\)
with respect to the discrete counterpart of the weighted inner product
\(\langle \cdot,\cdot\rangle_{\omega^{-1}}\).
An explicit construction of this projection is provided in
Section~2 of the Supplementary Material.
This discrete operator mirrors the projection introduced in \cref{thm:adaptive_LoMaC_functional} and serves as its
finite-dimensional analogue.
In particular, it preserves the discrete macroscopic moments,
enforces the same $(d_v+2)$-dimensional velocity subspace structure,
and retains the adaptive velocity-decay template encoded by the
discrete weight $\mathbf{\Omega}^\star$.

By construction, $\mathcal P^\star$ acts only on the velocity degrees of freedom.
Hence, for each fixed spatial multi-index $i$,
\[
(\mathcal P^\star \mathcal F)_{i,\cdot}
\in
\mathcal{N}(\mathbf{\Omega}^\star).
\]
Moreover, the discrete projection preserves the raw moments, i.e.,
\begin{equation}
\label{eq:moments_pres_disc}
\mathbf M(\mathcal P^\star \mathcal F)
=
\mathbf M(\mathcal F),
\end{equation}
where $\mathbf M(\cdot)$ denotes the discrete analogue of \eqref{eq:raw moments}.
The derivation of \eqref{eq:moments_pres_disc} is provided in
Section~2 of the Supplementary Material. This is the discrete analogue to property 1 of \cref{thm:adaptive_LoMaC_functional}.

Since $\mathcal P^\star \mathcal F$ lies in the $(d_v+2)$-dimensional
velocity subspace $\mathcal N(\mathbf{\Omega}^\star)$ for each spatial index, it is uniquely determined from the conserved variables $\mathbf U(\mathcal F)$. In particular, the raw moments are related to the conserved variables through relation \eqref{eq:M from U}. Consequently, there exists a discrete reconstruction operator
\[
\mathcal R :
\mathbb R^{(d_v+2)\prod_{\mu=1}^{d_x} N_{x^{(\mu)}}}
\;\longrightarrow\;
\mathbb{R}^{N_{x^{(1)}}\times\cdots\times N_{x^{(d_x)}}
\times
N_{v^{(1)}}\times\cdots\times N_{v^{(d_v)}}},
\]
such that
\[
\mathcal P^\star \mathcal F
=
\mathcal R\big(\mathbf U(\mathcal F)\big).
\]
We provide an explicit construction of $\mathcal R$ in
Section~2 of the Supplementary Material using weighted Gram-Schmidt orthogonalization.

Let $\mathbf U^{n+1}$ denote the macroscopic solution obtained from a
conservative solver of the system \eqref{eq:macro_sys}. 
The discrete adaptive-weight LoMaC correction is defined as
\begin{equation}\label{eq:LoMaC_update_discrete}
\mathcal F^{n+1}
=
\mathcal F^{n+1,\star}
-
\mathcal R\big(\mathbf U(\mathcal F^{n+1,\star})\big)
+
\mathcal R\big(\mathbf U^{n+1}\big).
\end{equation}
The first two terms remove the contributions of the raw moments associated with the non-conservative kinetic solution obtained with the SLAR method. The last term provides the correction which is consistent with the local conservation laws \eqref{eq:macro_sys}. By the construction of $\mathcal R$, it follows that
\[
\mathbf U(\mathcal F^{n+1})
=
\mathbf U^{n+1}.
\]
That is, the conserved moments from the corrected kinetic solution agree exactly with those produced by the conservative macroscopic solver. This is a direct consequence of equation \eqref{eq:moments_pres_disc} and the identity \eqref{eq:M from U}.

\subsubsection{Implicit Moment Update}
\label{sssection: implicit}

The implicit first-order backward Euler (BE) discretization of the macroscopic system \eqref{eq:macro_sys} gives
\begin{equation}\label{eq:BE_macro}
    \mathbf{U}^{n+1}
    = \mathbf{U}^{n}
      - \Delta t\,\mathcal{D}_{\bm{x}} \cdot \big(\mathbf{F}(\mathbf{U}^{n+1})\big)
      + \Delta t\,\mathbf{S}^{n+1},
\end{equation}
where $\mathcal{D}_{\bm{x}}\cdot$ denotes a conservative discrete approximation of
$\nabla_{\bm{x}}\cdot(\cdot)$.
The source term is given by
\[
\mathbf{S}^{n+1} =
\begin{pmatrix}
\bm{0},
\bm{\rho}_{\mathrm{e}}^{\,n+1} * \mathbf{E}_1^{\,n+1},
\cdots
\bm{\rho}_{\mathrm{e}}^{\,n+1} * \mathbf{E}_{d_x}^{\,n+1},
\bm{0}
\end{pmatrix}^{\top},
\]
where $*$ denotes element-wise multiplication and
$\mathbf{E}^{n+1}=-\nabla_{\bm{x}}\phi^{n+1}$ is obtained from the Poisson solve at time level $n+1$.
In \eqref{eq:BE_macro}, the macroscopic flux
$\mathbf{F}(\mathbf{U}^{n+1})$
contains both the kinetic moment contributions and the field-energy flux
$\mathbf{F}_{e_p}$.
The kinetic part of the flux is obtained as velocity moments of the corrected kinetic state
$\mathcal{F}^{n+1}$ through a LoMaC update \eqref{eq:LoMaC_update_discrete}. $\mathcal{F}^{n+1}$ is a function which explicitly depends on the moments from the SLAR update of the kinetic solution (the first two terms in \eqref{eq:LoMaC_update_discrete}), and encodes the dependence on $\mathbf{U}^{n+1}$ in an implicit way (the last term in \eqref{eq:LoMaC_update_discrete}). 
For each spatial direction, we adopt the conservative flux-difference formulation as in~\cite{sands2025adaptive}, and generalize it to the multidimensional discretization by applying the same conservative derivative operator
componentwise in each spatial direction. The field-energy flux $\mathbf{F}_{e_p}$ depends on the electric potential
$\bm{\phi}^{n+1}$ and its time derivative.
To ensure consistency with the discrete energy structure,
all spatial derivatives appearing in
$\mathbf{F}_{e_p}$,
the electric field
$\mathbf{E}^{n+1}$,
and the auxiliary Poisson problem for $\partial_t\phi^{n+1}$
are evaluated using spectral differentiation via the FFT.
This choice guarantees algebraic compatibility between the discrete gradient,
divergence, and Laplacian operators.
After applying space and time discretization to the system \eqref{eq:BE_macro}, we obtain the following nonlinear
algebraic system for $\mathbf{U}^{n+1}$:
\begin{equation}\label{eq:BE_nonlinear_system}
    \mathbf{G}(\mathbf{U}^{n+1}) = \bm{0},
    \qquad
    \mathbf{G}(\mathbf{U}^{n+1})
    :=
    \mathbf{U}^{n+1}-\mathbf{U}^{n}
    +\Delta t\,\mathcal{D}_{\bm{x}}\cdot\big(\mathbf{F}(\mathbf{U}^{n+1})\big)
    -\Delta t\,\mathbf{S}^{n+1}.
\end{equation}
We solve the system \eqref{eq:BE_nonlinear_system} using a JFNK method \cite{knoll2004jacobian,knoll2005jacobian}, as in our previous work \cite{sands2025adaptive}. An advantage of this approach is that it eliminates the need to store the Jacobian associated with the nonlinear residual. Instead, the Jacobian-vector products used in the inner Krylov solver, e.g., GMRES, can be approximated using differences of the nonlinear residual. Additionally, for modest time steps, the non-conservative SLAR solution provides a reasonable initial guess to the nonlinear system so that it is not necessary to consider preconditioning. This observation is consistent with our previous work \cite{sands2025adaptive}. For readers interested in full implementation details, including the spectral spatial discretization and solver parameters, an open-source reference implementation of the present method is available in the accompanying code repository \href{https://github.com/Nanyheng/LoMaC_SLAR}{\texttt{https://github.com/Nanyheng/LoMaC\_SLAR}}.

The BE discretization introduced above can be extended to higher-order
implicit time integrators.
We consider two classes of methods to address the coupling between the kinetic and macroscopic systems.
The first option is to employ diagonally-implicit RK (DIRK) schemes.
In this case, each intermediate stage requires the solution of a nonlinear macroscopic system,
which is handled by the same JFNK strategy described above
(See~\cite{sands2025adaptive} for further details.). For DIRK schemes, the intermediate stage distributions are generated by applying the SLAR update \eqref{eq: slarvp} at the corresponding stage times. Each stage therefore consists of a SLAR evaluation and a JFNK solve.
While this approach yields high-order accuracy,
the computational cost scales with the number of stages.
The second option is to use BDF methods.
Unlike DIRK schemes, BDF methods do not introduce intermediate stage times,
so that each time step requires only one evaluation of
the SLAR update \eqref{eq: slarvp} and one JFNK solve.
Higher-order BDF schemes naturally lead to a single nonlinear system per step,
which is again solved by JFNK.
However, BDF methods require appropriate start-up procedures. In our implementation, we employ a DIRK integrator to generate the initial
time steps required for start-up. We then switch to a BDF method for the remaining time integration. A fixed-step BDF-2 scheme is often the default choice due to its A-stability. In our implementation, however, we use the classical variable-step BDF-3 scheme \cite{gear1971numerical} to achieve higher temporal accuracy and adaptive time stepping. Additional care is required because, even in the fixed-step case, the stability region of BDF-3 does not fully contain the imaginary axis. For related results on step-ratio conditions for the perturbation stability of variable-step BDF methods, see \cite{li2022stability}.

\begin{figure}[t]
\begin{center}
\begin{tikzpicture}[scale=0.8, transform shape]

    \draw (2.5,0) rectangle (6.5,1); 
    \node at (2+2+0.5,0.5) {$\mathrm{SLAR}_{\mathrm{VP}}\bigl(\mathcal{F}^n;\,\Delta t,\,\varepsilon_{\mathrm{Base}}\bigr)$};

    \draw (3.4,2) rectangle (4.6,3); 
    \node at (2+2,2.5) {$\mathcal{F}^{n+1,\star}$}; 

    \draw (7.25,0) rectangle (14.5,1); 
    \node at (10.9,0.5) {$\mathbf{U}^{n+1} = \mathbf{U}^{n}
      - \Delta t\,\mathcal{D}_{\bm{x}} \cdot\Big(\mathbf{F} \big( \mathcal F^{n+1} \big) \Big)
      + \Delta t\,\mathbf{S}^{n+1}$}; 

    \draw (7.45,2) rectangle (14.65,3); 
    \node at (11.1,2.5) {$\mathcal F^{n+1}= \mathcal F^{n+1,\star}-\mathcal R\big(\mathbf U(\mathcal F^{n+1,\star})\big) + \mathcal R\big(\mathbf U^{n+1}\big)$}; 

    \node at (11,-1.5) {\textbf{Converged?}}; 
    \node at (14.8,-1.8) {\scriptsize No};
    \node at (7.0,-1.8) {\scriptsize Yes};
    
    \node[anchor=west] at (4,1.5) {\scriptsize Adaptive-rank non-conservative update};
    \node[anchor=west] at (11,-0.65) {\scriptsize Implicit conservative moment update};
    \node[anchor=west] at (11,1.5) {\scriptsize Update macroscopic flux};

    \node[anchor=west] at (14.75,2.8) {\scriptsize Modify closure};

    \node[anchor=west] at (4,-0.75) {\scriptsize Prepare for the next time step};
    
    \node at (6.2, 2.8) {\scriptsize Initialize closure};

    \draw[->][thick, dashed] (4,1) -- (4,2);

    \draw[->][thick, dashed] (4.6,2.5) -- (7.45,2.5);

    \draw[->][thick] (11,2) -- (11,1);

    \draw[->][thick] (11,0) -- (11,-1.25);

    \draw[thick] (12.5,-1.5) -- (16+1,-1.5);
    \draw[thick] (17,-1.5) -- (16+1,2.5);
    \draw[->][thick] (17,2.5) -- (14.65,2.5);

    \draw[thick, dashed] (9.5,-1.5) -- (4,-1.5);
    \draw[->][thick, dashed] (4,-1.5) -- (4,0);
\end{tikzpicture}
\end{center}
\caption{Flowchart for the VP system with the SLAR method and the implicit LoMaC correction.}
\label{fig:Fluid correction diagram}
\end{figure}

The flow chart of the proposed implicit LoMaC algorithm is shown in \Cref{fig:Fluid correction diagram}. We denote by \(\mathcal F^{n+1,\star}\) the non-conservative distribution obtained from the SLAR update \eqref{eq: slarvp}. The corrected distribution \(\mathcal F^{n+1}\) is then obtained through the LoMaC correction of the form \eqref{eq:LoMaC_update_discrete}. In this update, the components depending on \(\mathcal F^{n+1,\star}\) are known explicitly, while the dependence on the macroscopic state \(\mathbf U^{n+1}\) enters implicitly through the macroscopic moment system \eqref{eq:macro_sys}. This correction is used to close the macroscopic system, which is solved by a JFNK method. Upon convergence, the resulting macroscopic state \(\mathbf U^{n+1}\) is used to complete the correction and obtain \(\mathcal F^{n+1}\). The kinetic and macroscopic solutions are then self-consistent and serve as the input for the next time step.

\section{Numerical Results}\label{sec:numerical_tests}

We present benchmark results for the 1D--1V VP system in \Cref{sec:1D1V_VP_tests} and the 2D--2V VP system in \Cref{sec:2D2V_VP_tests}. We use the S-stable DIRK-3 method from Theorem~5 of \cite{alexander1977diagonally} for start-up, followed by the variable-step BDF-3 method in \cite{gear1971numerical}.
In the macroscopic correction, the JFNK solver uses tolerances of $10^{-10}$ for Newton iteration and $10^{-4}$ for the inner Krylov solver. 

\subsection{1D--1V Examples}\label{sec:1D1V_VP_tests}

In this subsection, we report three 1D--1V benchmark tests: Landau damping, the two-stream instability, and the bump-on-tail instability. Unless otherwise noted, we use the HTACA tolerance \(\varepsilon_{\mathrm{Base}}=10^{-4}\) from \cite{zheng2025semihighD}. These tests assess the accuracy, robustness, adaptive-rank behavior and conservation of the proposed scheme.

\begin{example}(1D--1V Landau damping)
Consider the initial condition
\begin{equation*}
f(x,v,0)=\frac1{\sqrt{2\pi}}\left(1+\alpha\cos(kx)\right)\exp\left(-\frac{v^2}{2}\right),
\end{equation*}
where $\Omega_{x}=[0,4\pi]$, $\Omega_{v}=[-2\pi,2\pi]$, \(k=0.5\). We set \(\alpha=0.01\) for weak Landau damping and \(\alpha=0.5\) for strong Landau damping. Unless otherwise specified, we use a \(256\times256\) mesh and a CFL number of 5 for the tests below. For weak Landau damping, we set \(\varepsilon_{\text{Base}}=10^{-5}\) to match the resolution requirement.

\Cref{fig:LD_summary} shows numerical results for both the weak and strong Landau damping tests. The first two rows show the evolution of the electric energy, the phase-space distribution at \(t=40\), and the corresponding adaptive weight function selected by the LoMaC projection for the weak and strong damping configurations, respectively. In the weak damping case, the method captures the rapid decay in the electric field energy which is transferred to kinetic energy. The distribution function remains close to a perturbed Maxwellian and develops only mild filamentation. The corresponding adaptive weight inherits the dominant velocity space structure which is a smooth Maxwellian-like profile. In the strong damping case, the electric field energy first undergoes a rapid decay, as in the weak case, which is later followed by growth as particles transfer their kinetic energy back into the electric field. The phase-space distribution exhibits pronounced filamentation and nonlinear deformation due to trapping, and the adaptive weight captures both the filamentary features and the velocity decay of the distribution. The third row of \Cref{fig:LD_summary} shows the conservation properties for the strong damping case. The errors in total mass, momentum, and total energy remain at the level of the nonlinear solver tolerance throughout the simulation, demonstrating that the LoMaC correction effectively enforces macroscopic conservation. Without the LoMaC correction, the corresponding errors are \(\mathcal{O}(10^{-4})\) for mass and energy, and \(\mathcal{O}(10^{-3})\) for momentum.

\begin{figure}[!htbp]
    \centering
    
    \hspace{-0.4cm}
    \subfigure{
    \includegraphics[width=0.32\textwidth]{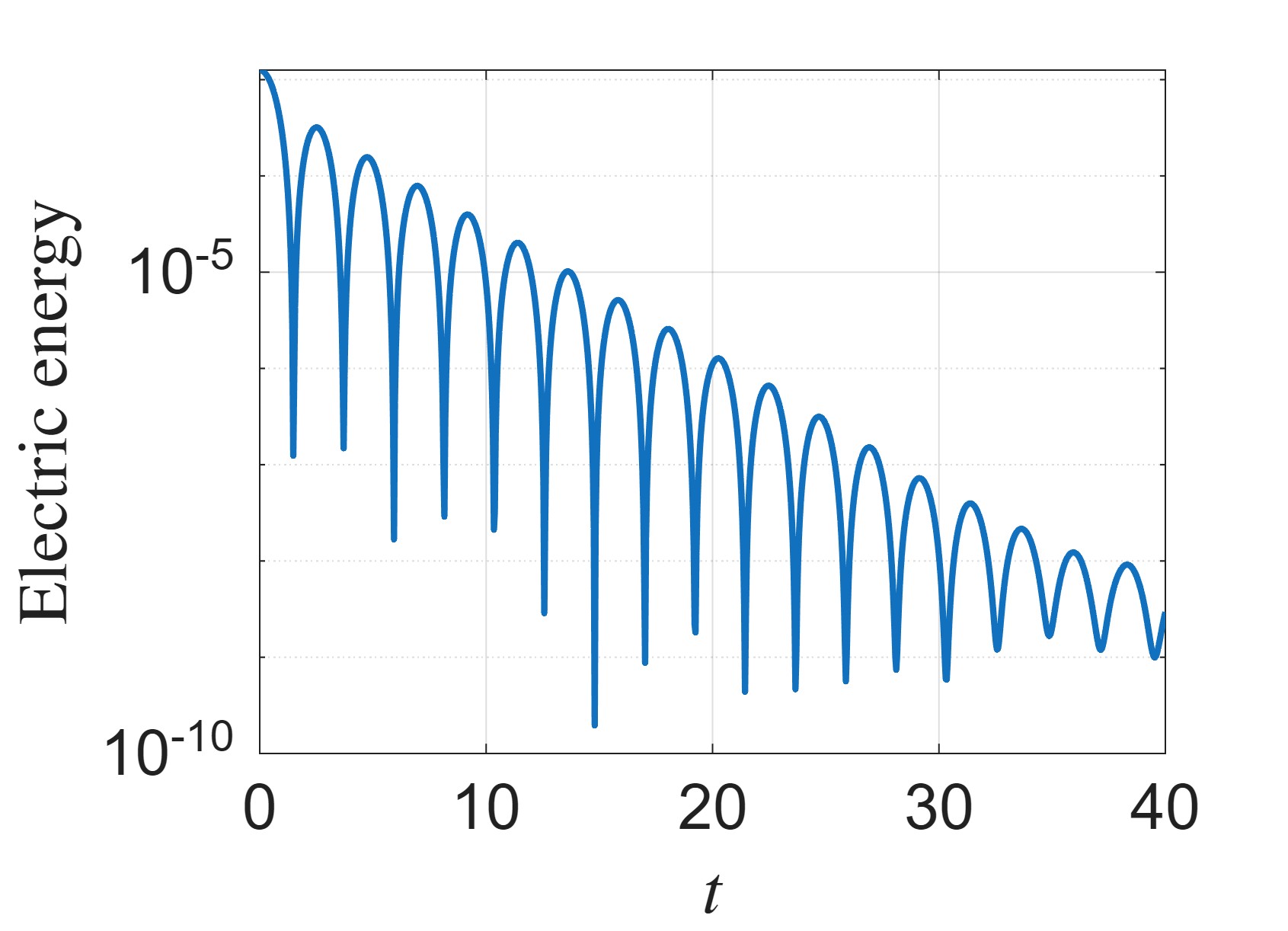}
    }
    \hspace{-0.55cm}
    \subfigure{
    \includegraphics[width=0.32\textwidth]{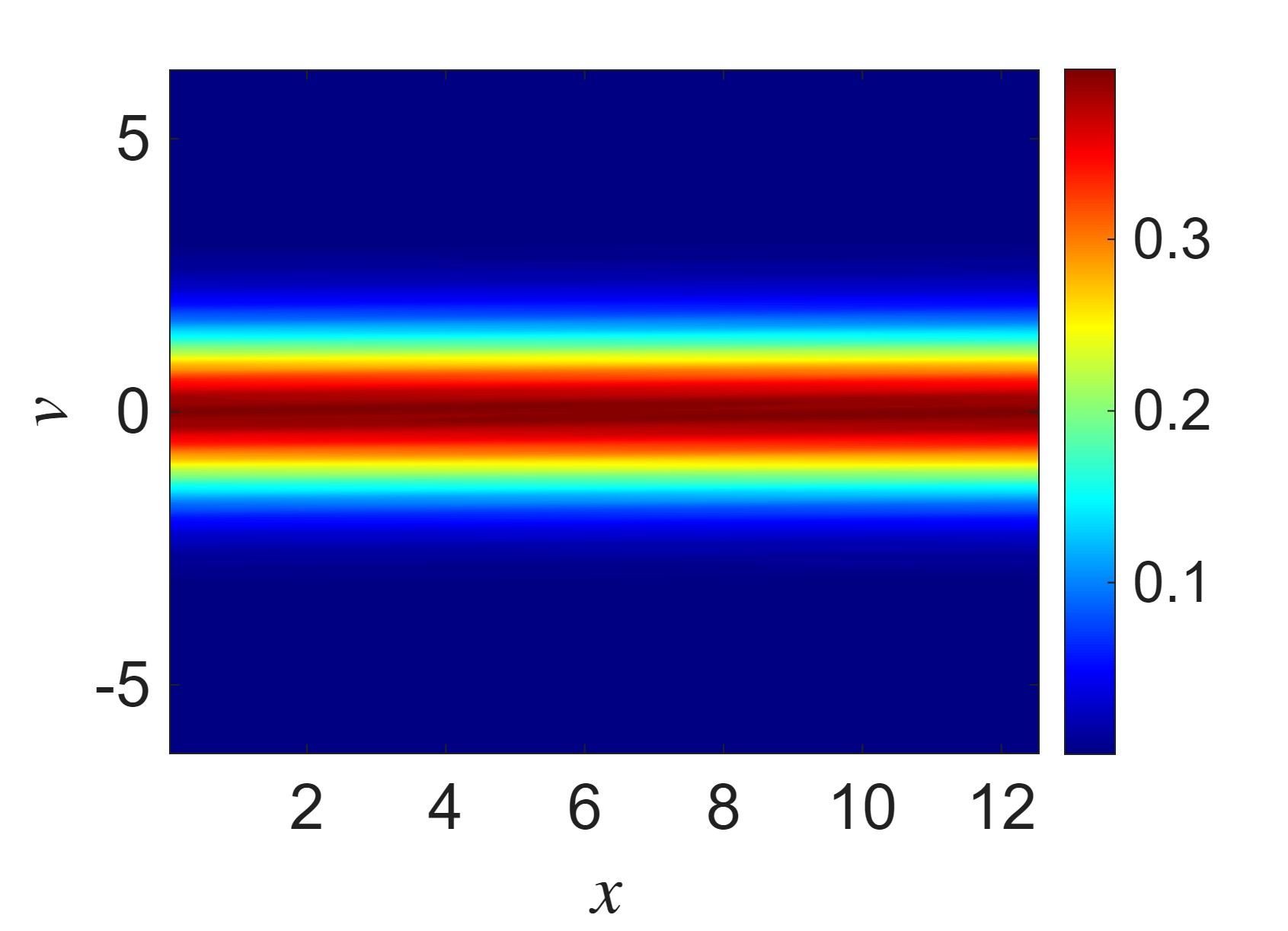}
    }
    \hspace{-0.55cm}
    \subfigure{
    \includegraphics[width=0.28\textwidth]{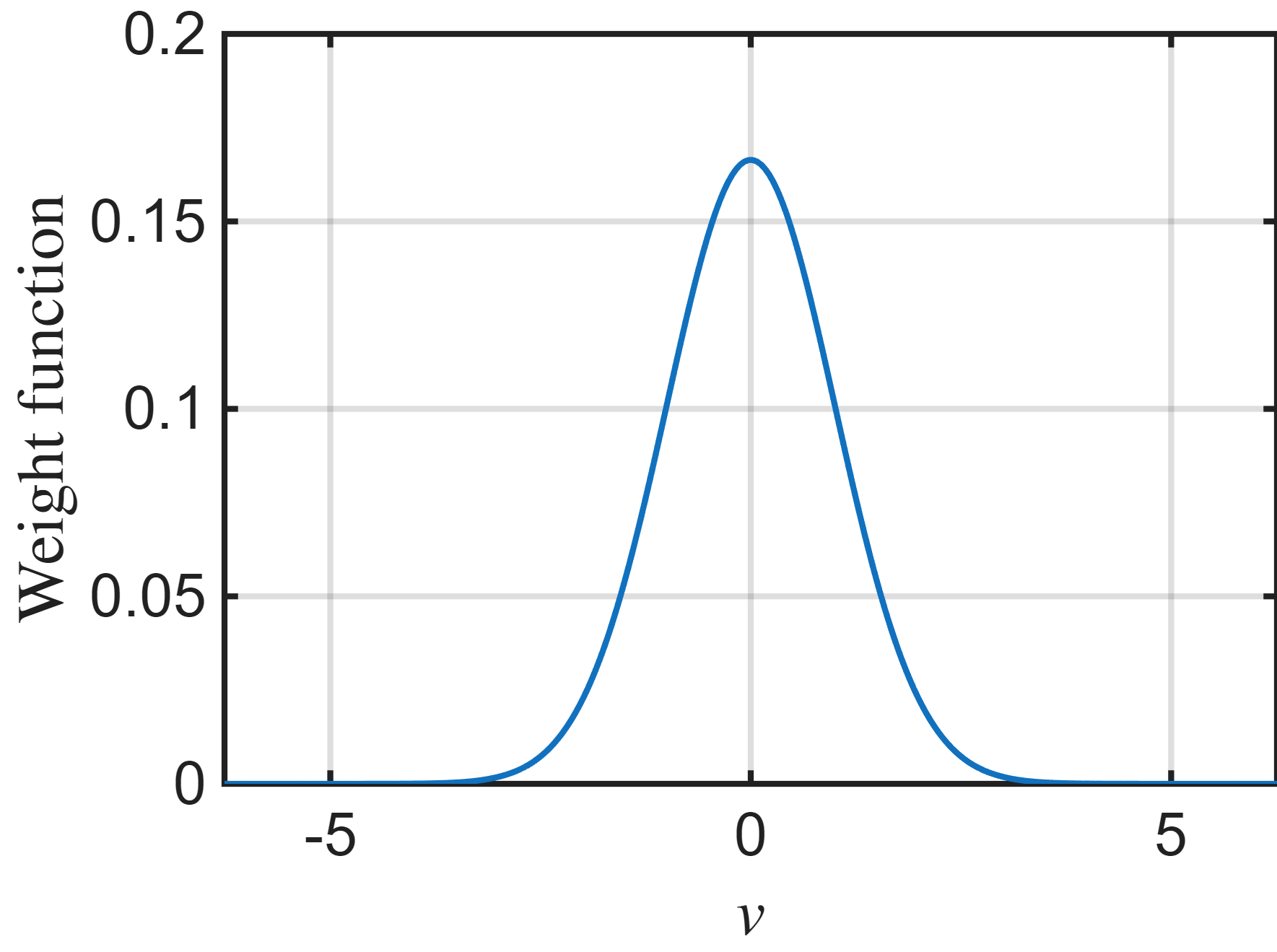}
    }

    \vspace{-0.25cm}

    \hspace{-0.4cm}
    \subfigure{
    \includegraphics[width=0.32\textwidth]{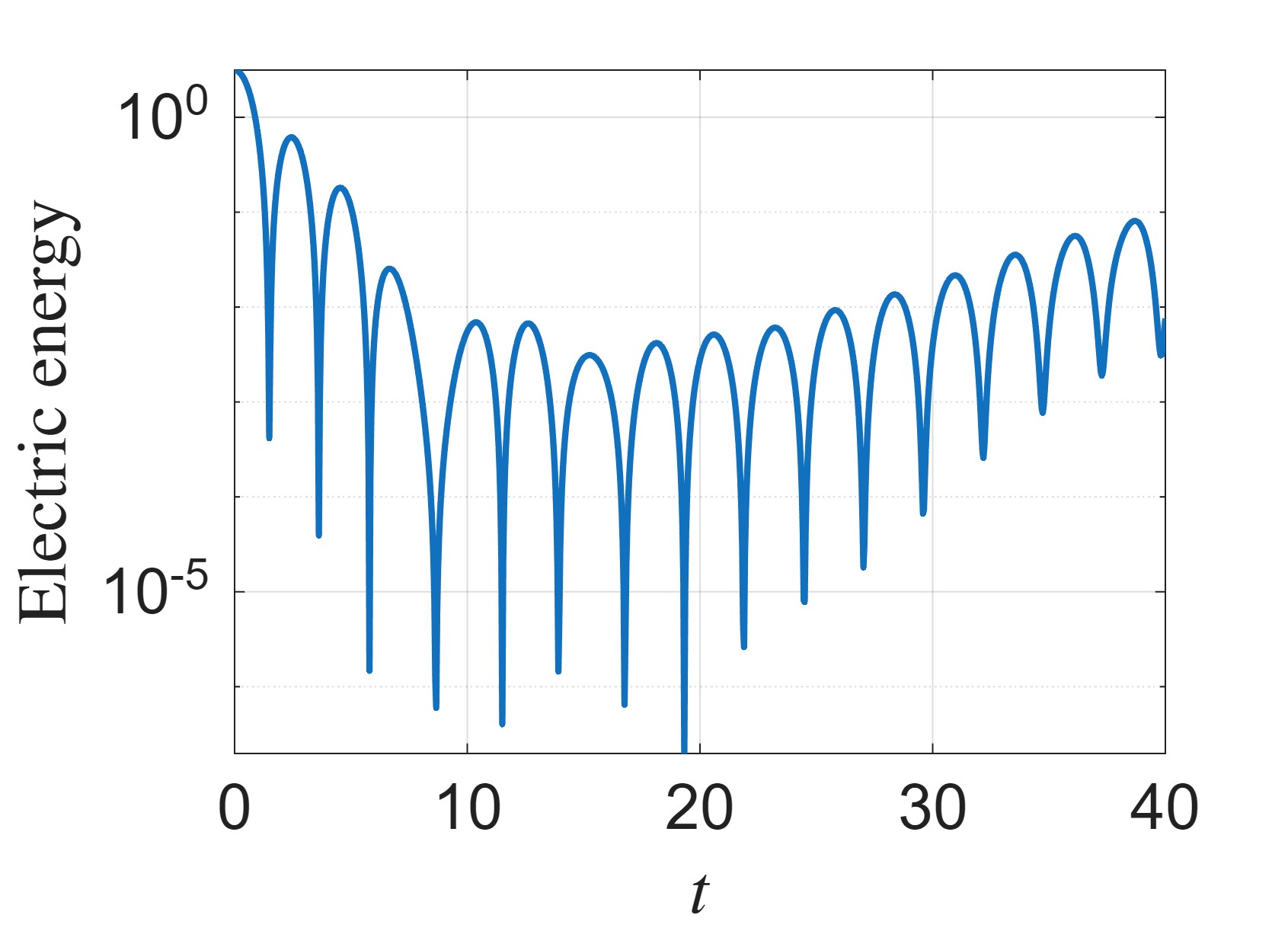}
    }
    \hspace{-0.55cm}
    \subfigure{
    \includegraphics[width=0.32\textwidth]{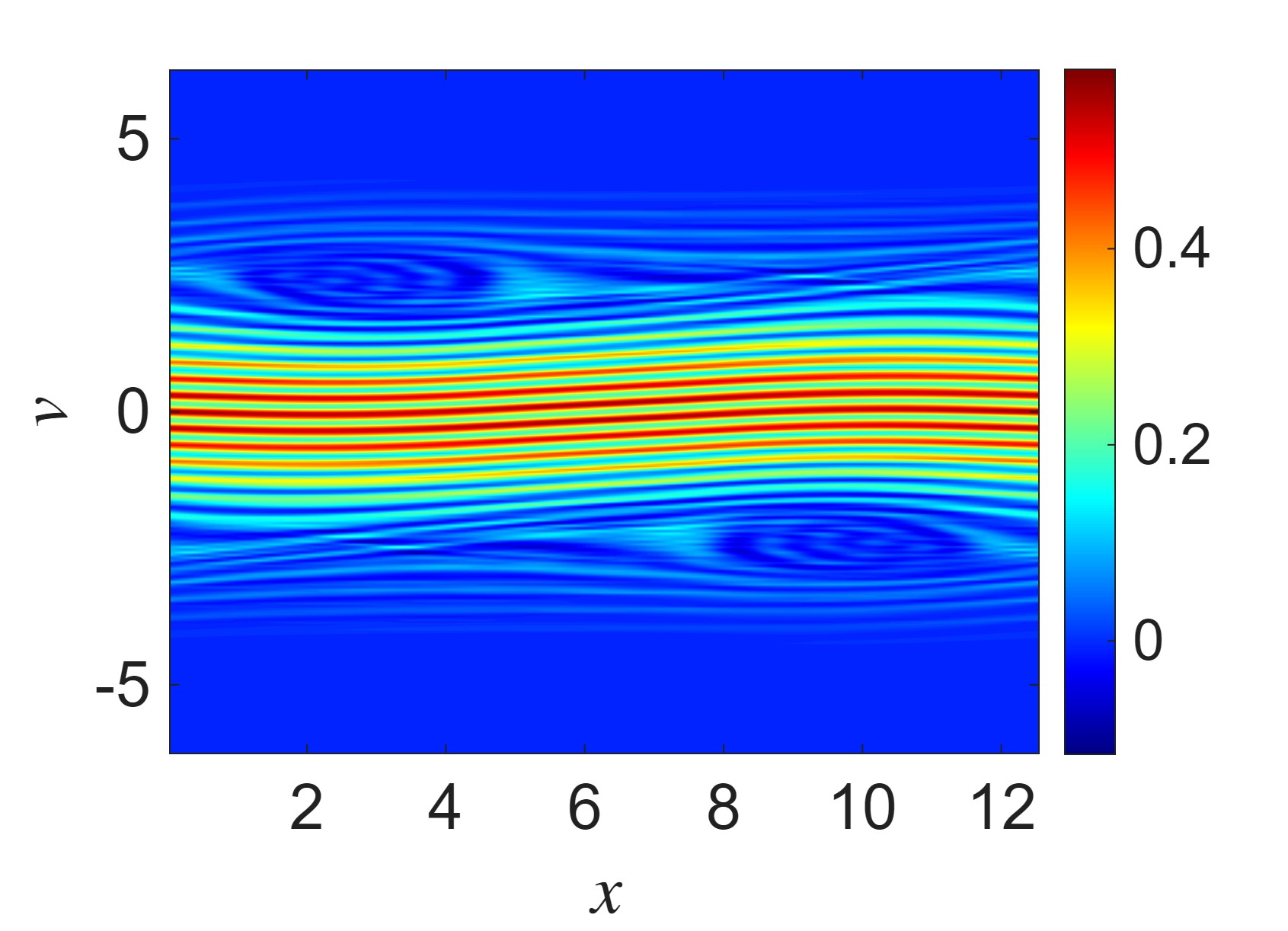}
    }
    \hspace{-0.55cm}
    \subfigure{
    \includegraphics[width=0.28\textwidth]{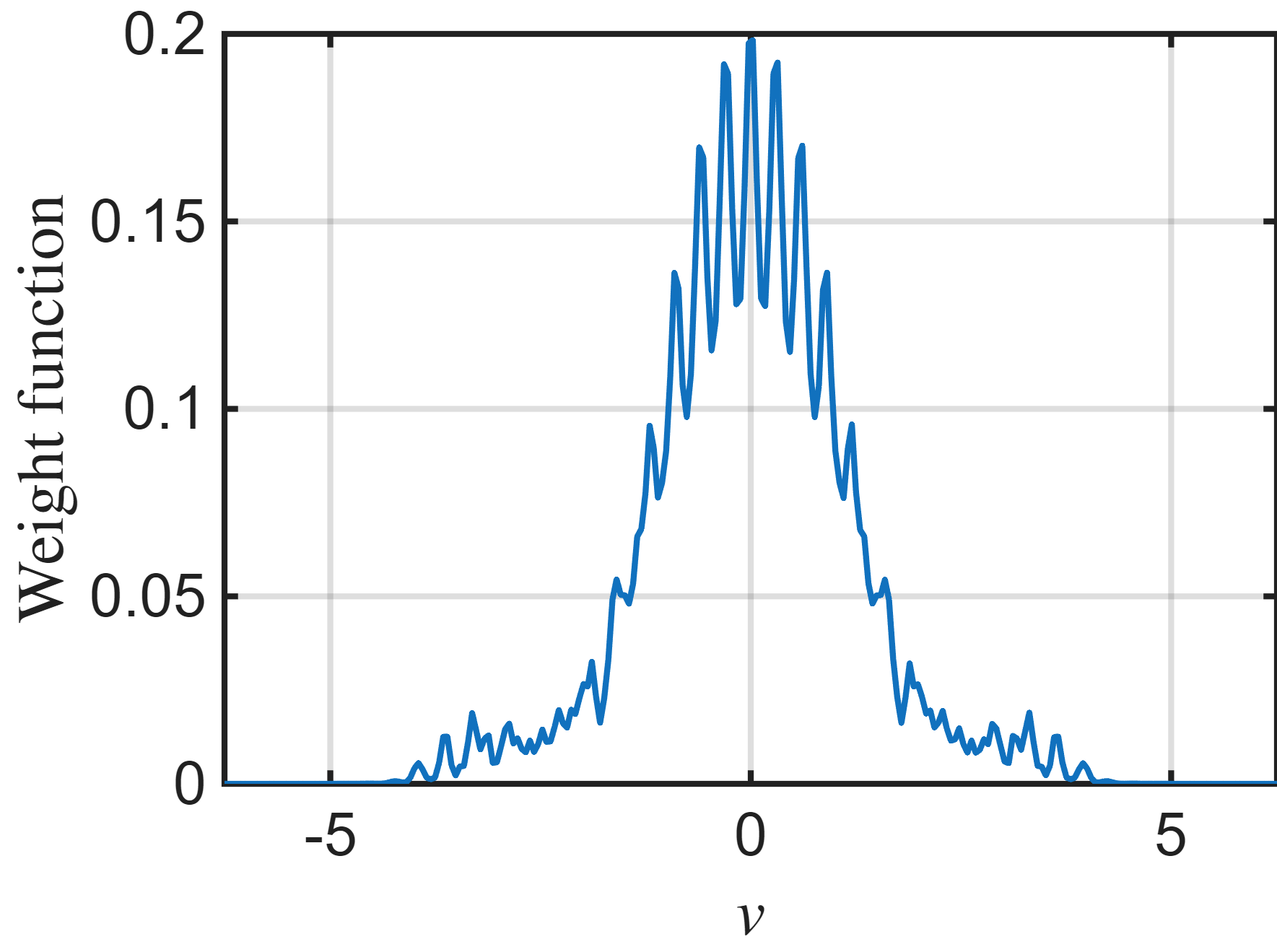}
    }

    \vspace{-0.25cm}

    \hspace{-0.2cm}
    \subfigure{
    \includegraphics[width=0.32\textwidth]{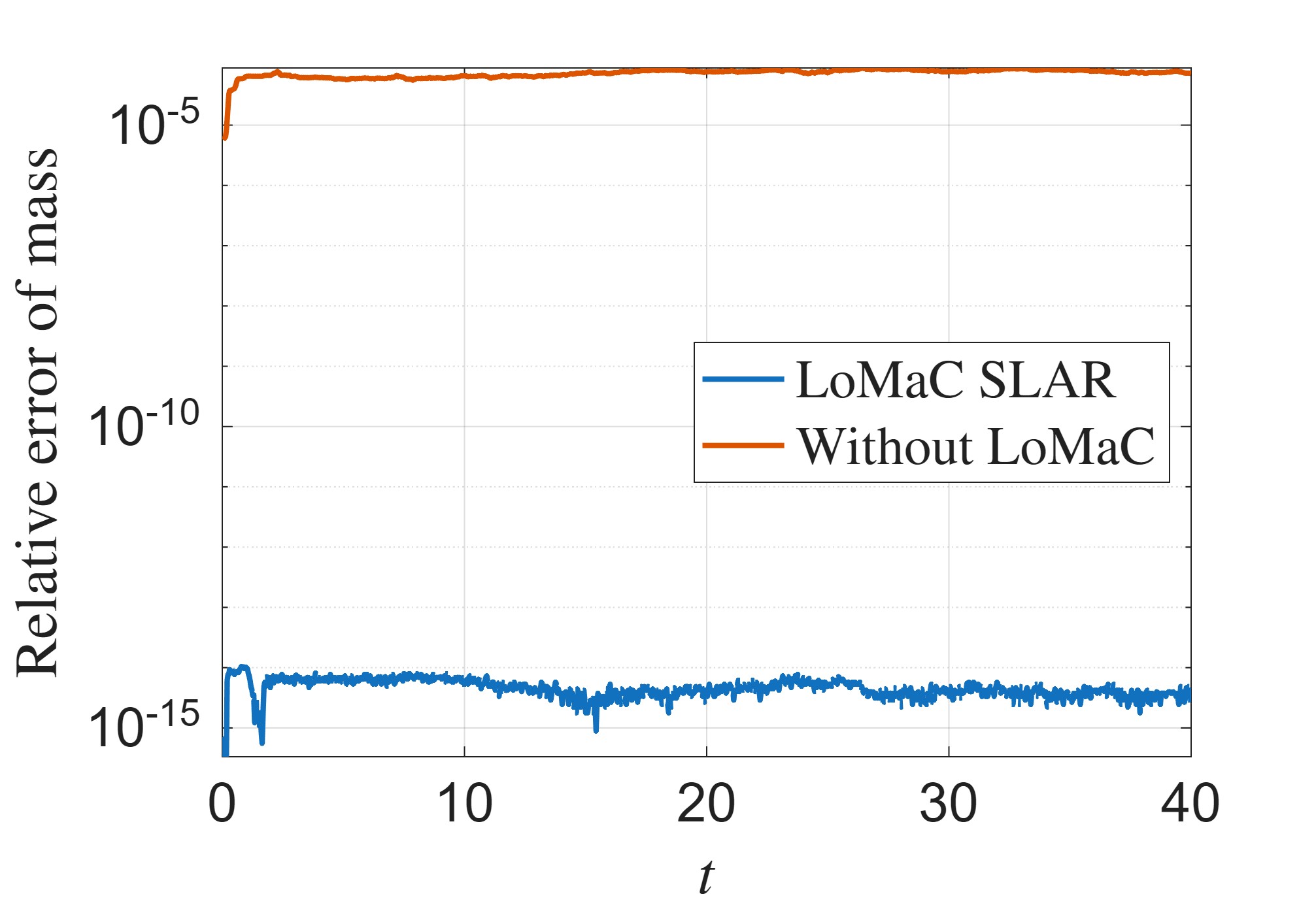}
    }
    \hspace{-0.55cm}
    \subfigure{
    \includegraphics[width=0.32\textwidth]{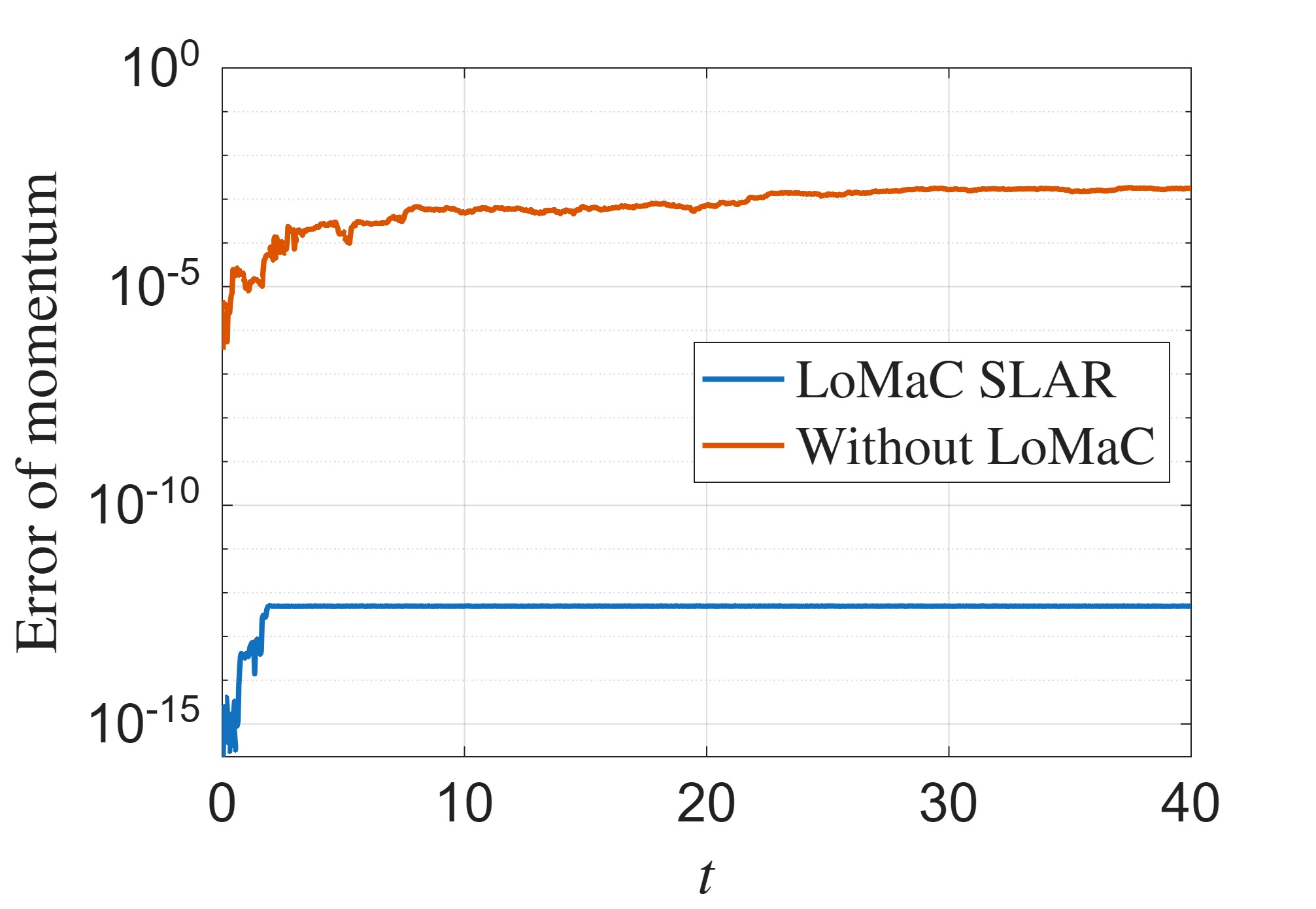}
    }
    \hspace{-0.55cm}
    \subfigure{
    \includegraphics[width=0.32\textwidth]{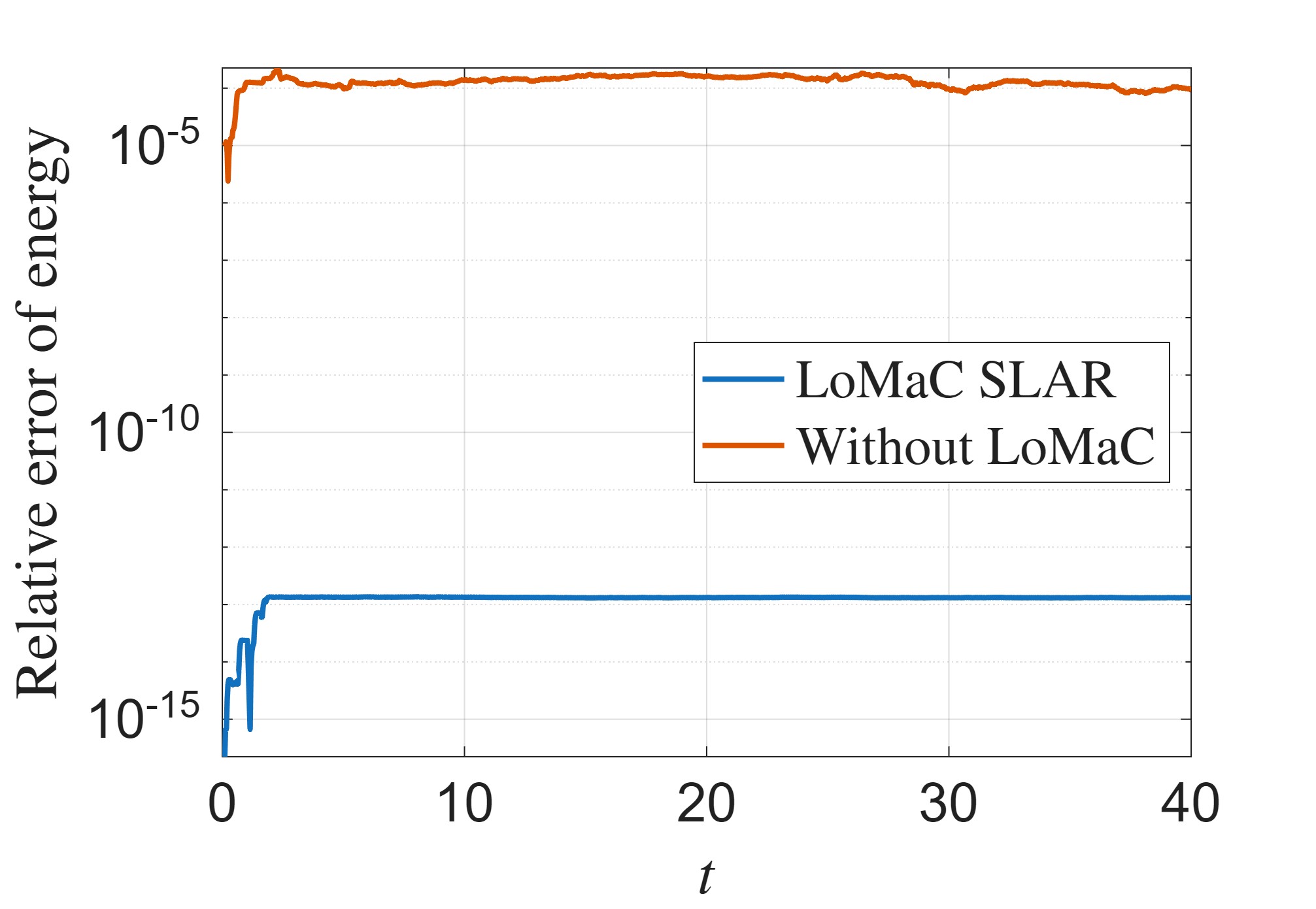}
    }
    \caption{(1D--1V Landau damping). The first row shows the electric energy, a contour plot of the distribution function at \(t=40\), and the adaptive weight function for the weak Landau damping configuration. The second row shows the corresponding quantities for the strong Landau damping configuration. The third row shows the relative or absolute errors in mass, momentum, and total energy for the SLAR method with and without the correction for the strong Landau damping.}
    \label{fig:LD_summary}
\end{figure}

\Cref{fig:CFL_vs_something_SLD} presents a temporal refinement study of the proposed solver using two different phase-space meshes. Reference solutions are computed with the fourth-order conservative SL--FV method \cite{zheng2022fourth} on a refined \(512\times512\) mesh with a CFL of 1. The log-log error curves confirm third-order temporal accuracy. As the CFL number increases, the average numerical rank grows moderately, while the Newton and Krylov iteration counts increase noticeably for sufficiently large CFL numbers. Since the implicit macroscopic correction is initialized using a non-conservative update, larger time steps produce a larger initial nonlinear residual in the JFNK solve. This requires more iterations to reach the prescribed tolerance. No preconditioning is used, so this sensitivity reflects the conditioning of the implicit macroscopic closure at large time steps. These results motivate future preconditioning strategies for the LoMaC solver. Despite the increased iteration counts, the scheme remains stable over the tested CFL range.

\begin{figure}[!htbp]
    \centering
    
    \hspace{-0.85cm}
    \subfigure{
    \includegraphics[width=0.42\textwidth]{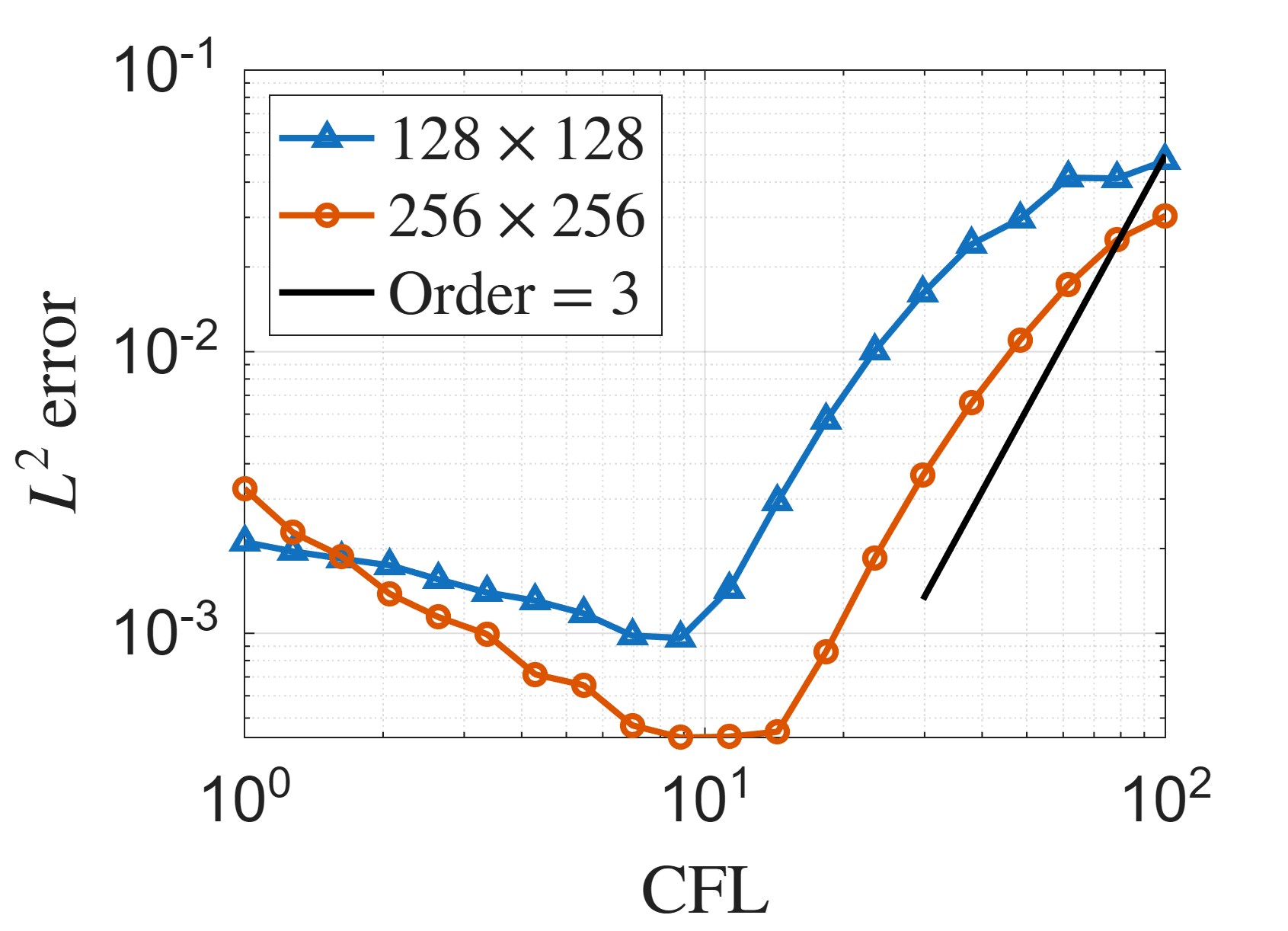}
    }
    \hspace{-0.55cm}
    \subfigure{
    \includegraphics[width=0.42\textwidth]{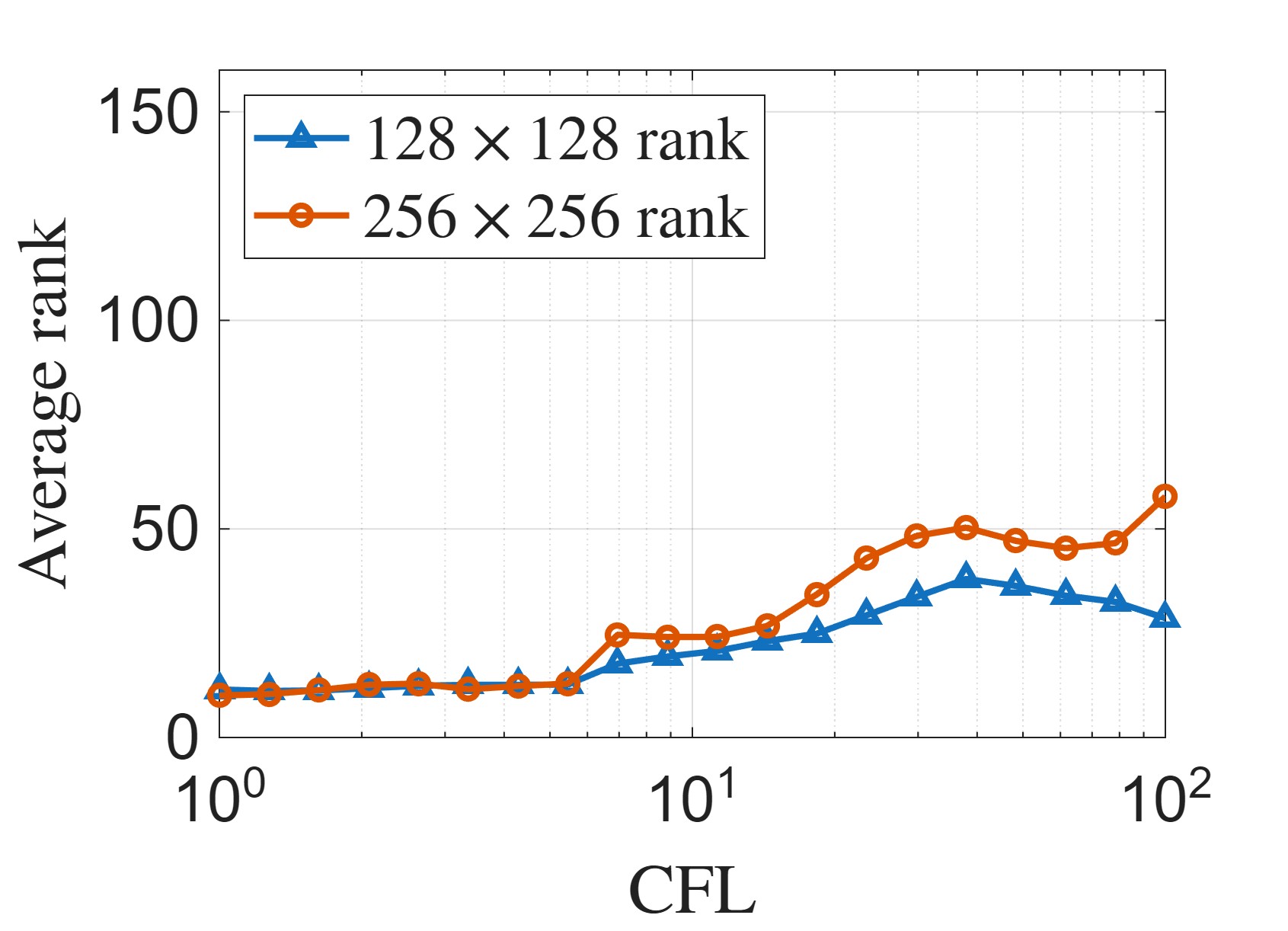}
    }
    
    \vspace{-0.25cm}

    \hspace{-0.6cm}
    \subfigure{
    \includegraphics[width=0.4\textwidth]{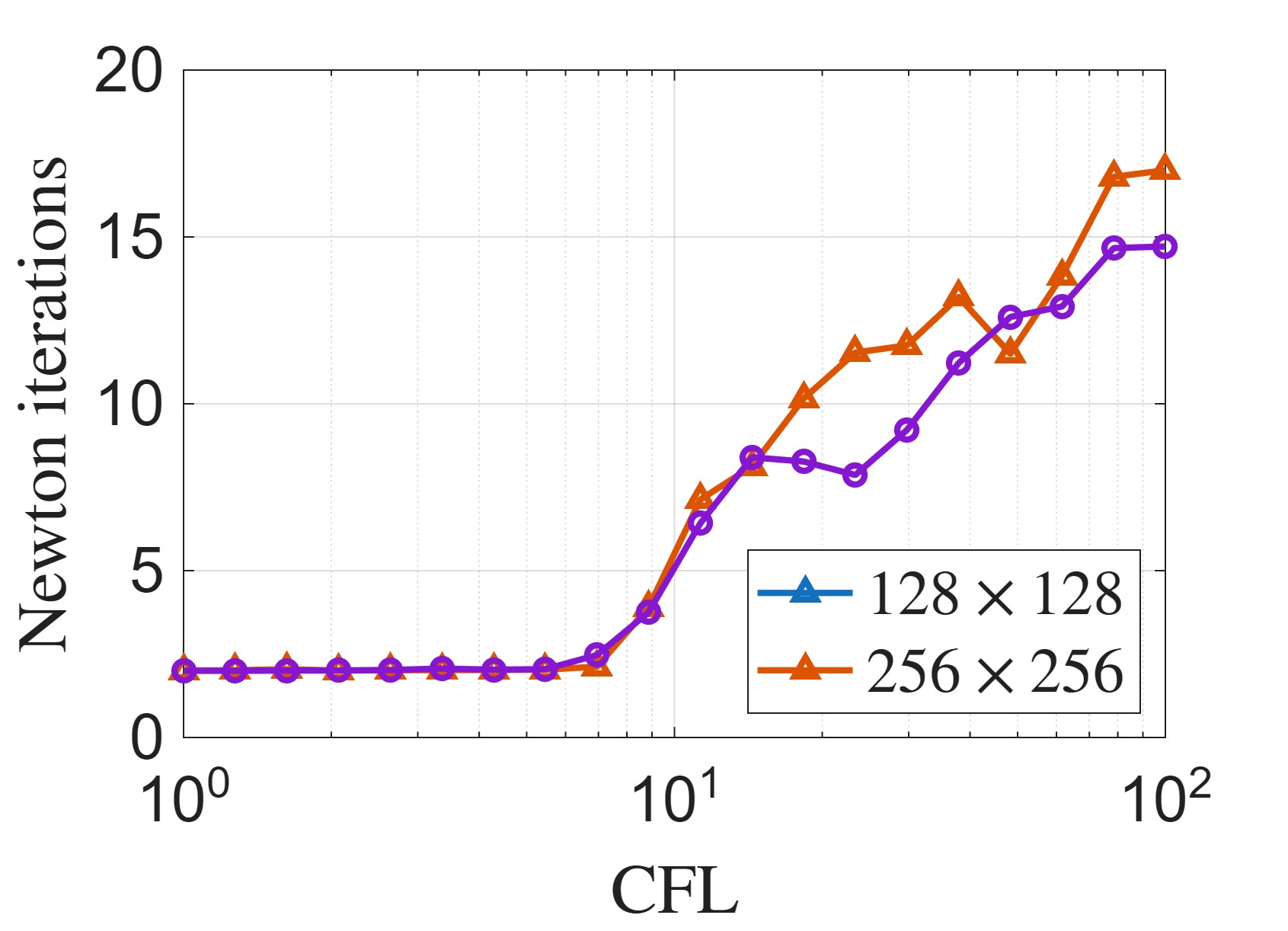}
    }
    \hspace{-0.55cm}
    \subfigure{
    \includegraphics[width=0.4\textwidth]{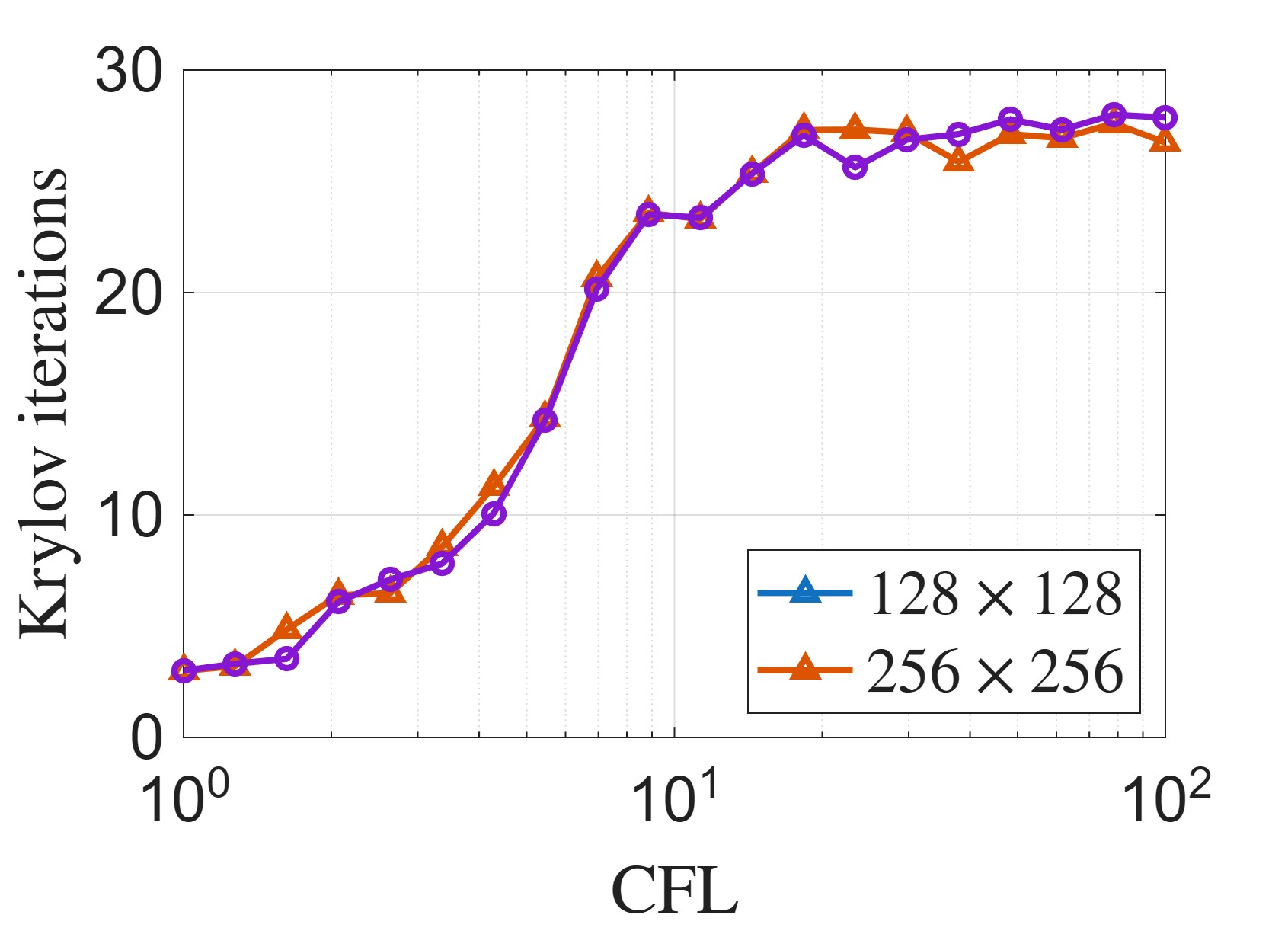}
    }
    \caption{(1D--1V Strong Landau damping). Temporal refinement study using different CFL numbers. Each experiment uses a fixed mesh and evolves the solution up to time $t=5$. Top left: Log-log plot of CFL numbers versus $L^2$ errors. Top right: semi-log plot of CFL numbers versus average ranks. Bottom left: semi-log plot of CFL numbers versus average Newton iterations per time step. Bottom right: semi-log plot of CFL numbers versus average Krylov iterations in each Newton step.}
    \label{fig:CFL_vs_something_SLD}
\end{figure}

\end{example}

\begin{example}(1D--1V Two-stream instability). Consider the initial condition
\begin{equation*}
\begin{split}
f(x,v,t=0) = \frac1{\sqrt{2\pi}}\left(1+\alpha\cos(kx)\right)\left[\exp\left(-\frac{(v-v_0)^2}{2}\right)+\exp\left(-\frac{(v+v_0)^2}{2}\right)\right],
\end{split}
\end{equation*}
where $\Omega_{x}=[0,10\pi]$, $\Omega_{v}=[-8,8]$, \(k = 0.2\), \(\alpha = 0.001\). In the numerical test below, a mesh of $256\times256$ and a CFL of $5$ are used. 

\Cref{fig:TSI_results} shows representative results for the two-stream instability. The first row displays the electric energy, the rank history, and a contour plot of the numerical solution at \(t=40\). The electric energy exhibits the expected linear growth followed by nonlinear saturation during the formation of the vortex. The rank increases sharply during the linear growth phase, which reflects the rapid development of fine-scale phase-space structures, as evidenced by the characteristic particle-trapping regions observed in the contour plot. These results show that the low-rank representation captures the essential nonlinear dynamics while maintaining a compressed approximation of the solution. The second row shows a mesh plot of the distribution function and the corresponding adaptive weight selected by the LoMaC projection at \(t=40\). The distribution exhibits a pronounced vortex structure in phase space. The adaptive weight captures the resulting bimodal velocity profile of the two streams and preserves the overall velocity decay, which reflects the dominant velocity space structure of the nonlinear solution. The conservation results are similar to those for the Landau damping test and are therefore omitted for conciseness.

\begin{figure}[!htbp]
\centering

    \hspace{-0.4cm}
    \subfigure{
    \includegraphics[width=0.33\textwidth]{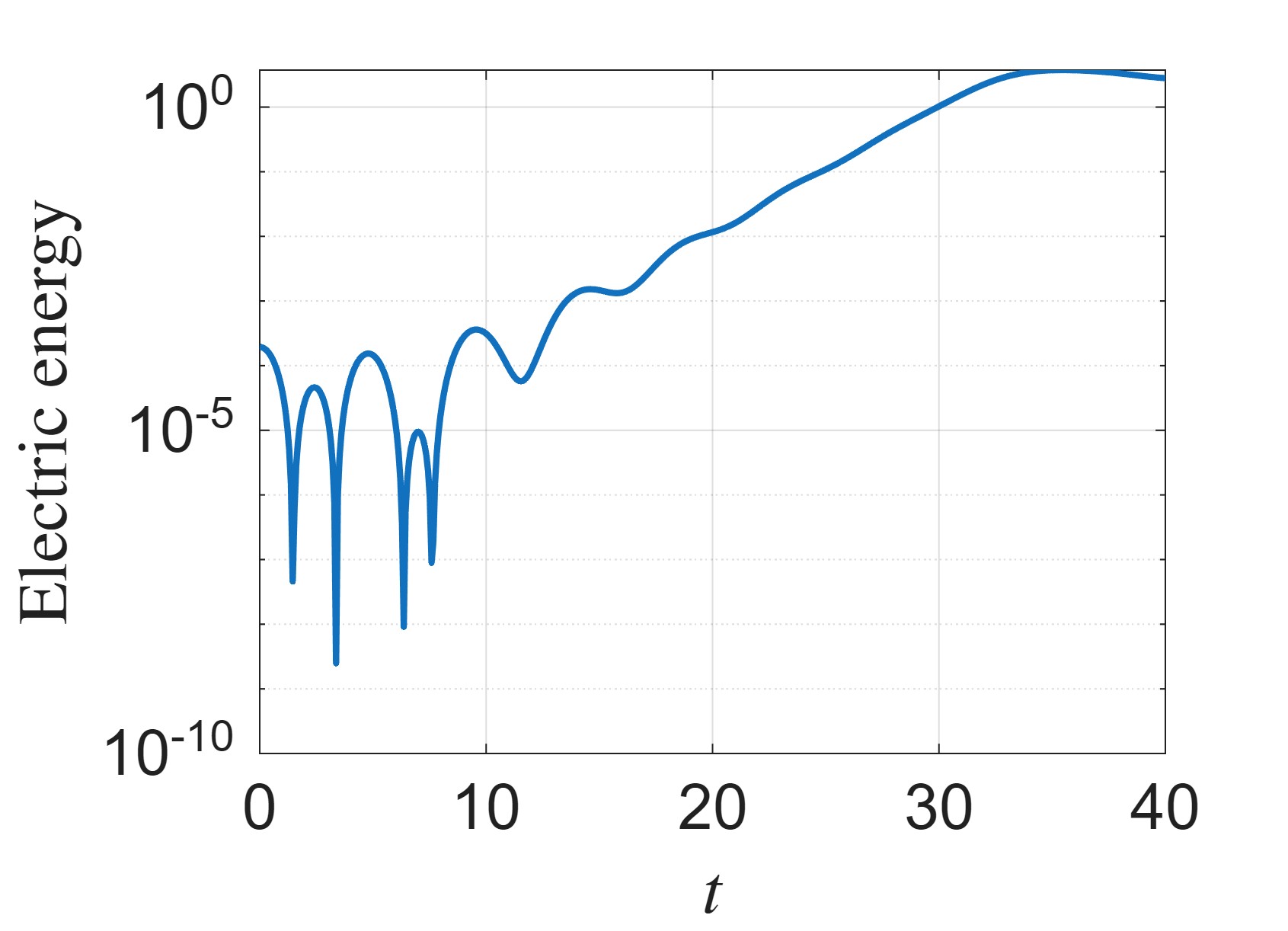}
    }
    \hspace{-0.55cm}
    \subfigure{
    \includegraphics[width=0.33\textwidth]{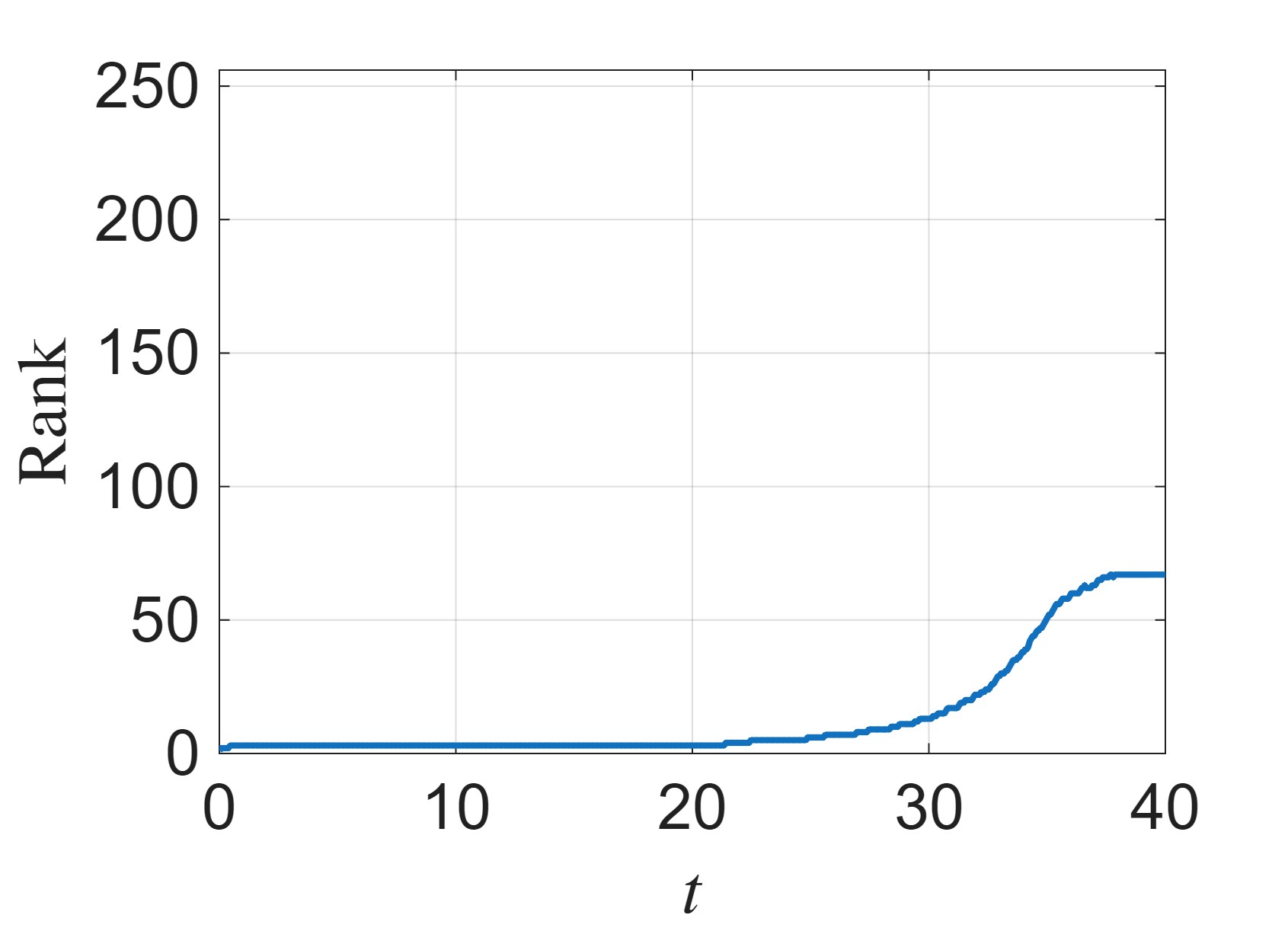}
    }
    \hspace{-0.55cm}
    \subfigure{
    \includegraphics[width=0.33\textwidth]{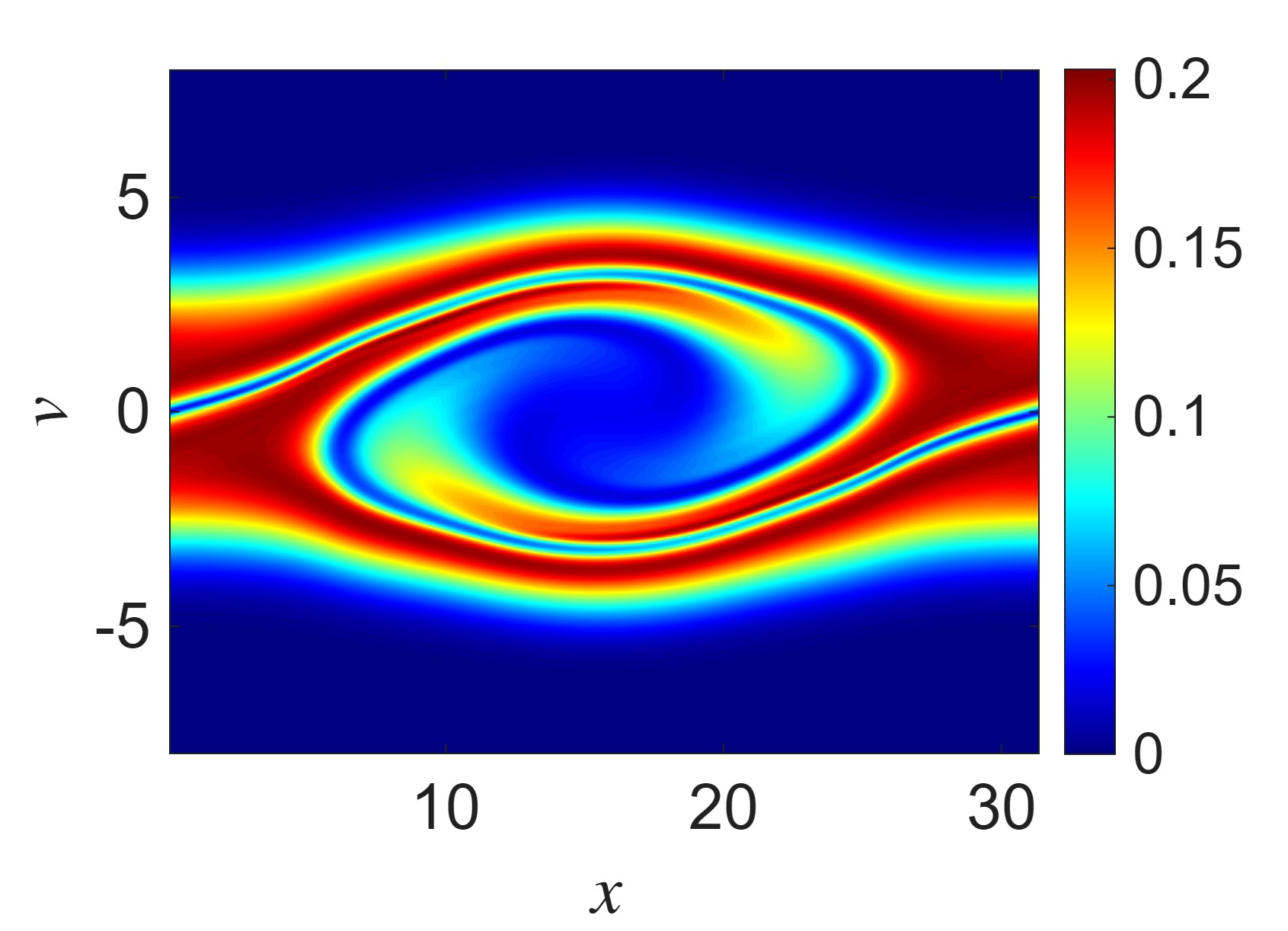}
    }

    \hspace{-0.4cm}
    \subfigure{
    \includegraphics[width=0.4\textwidth]{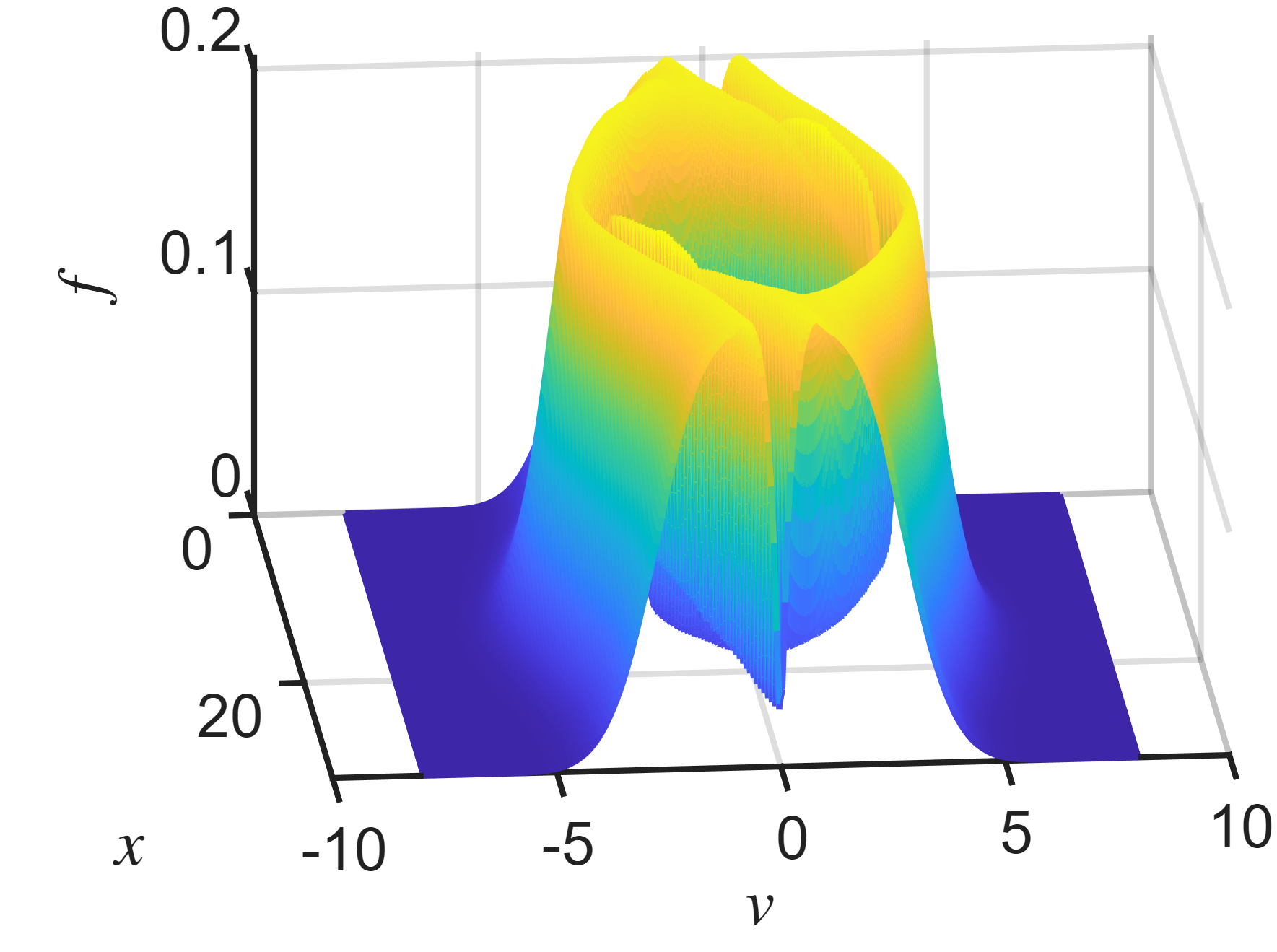}
    }
    \subfigure{
    \includegraphics[width=0.4\textwidth]{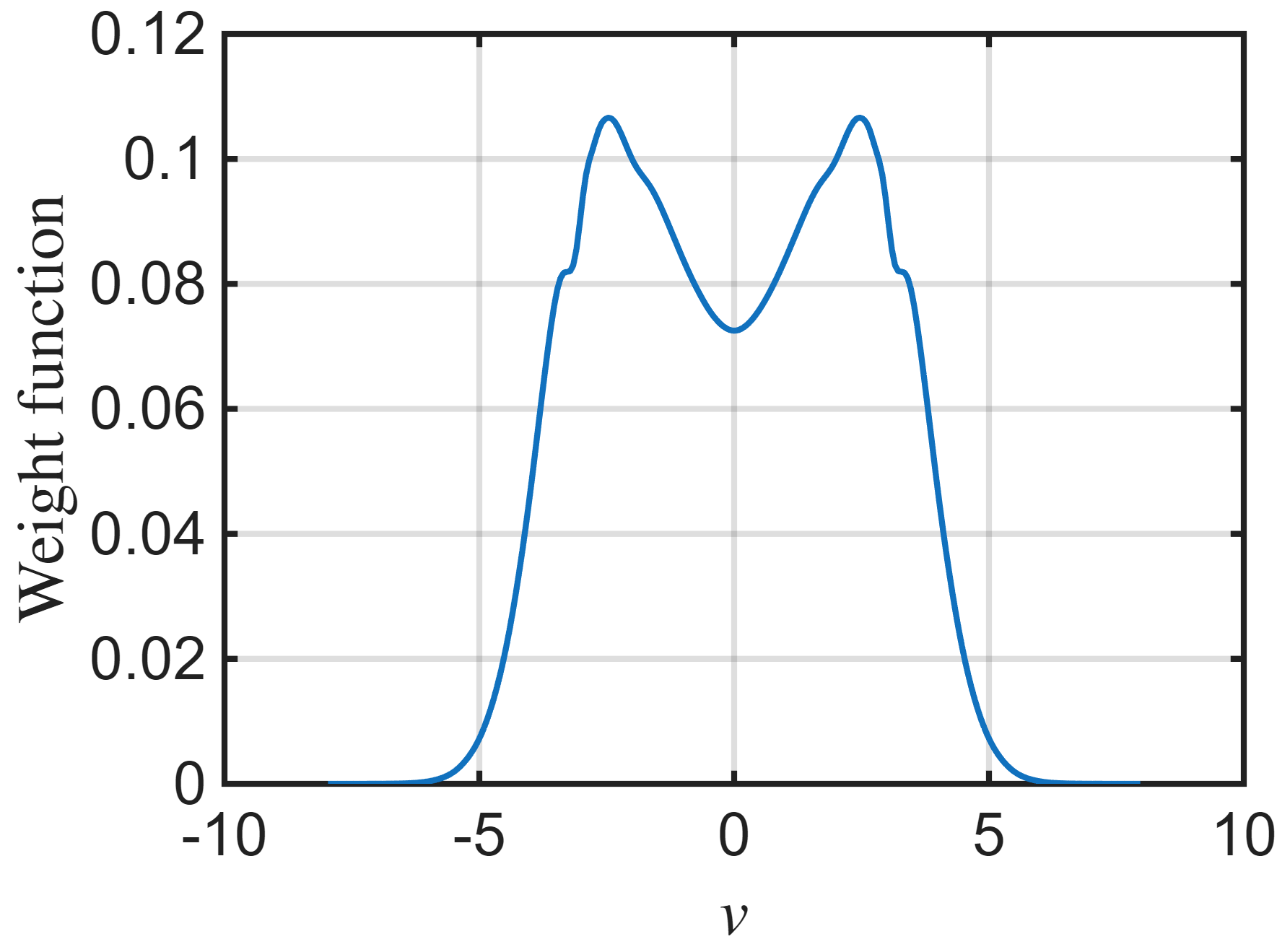}
    }
    \caption{(1D--1V Two-stream instability). The first row displays the time evolution of the electric energy, the rank history, and a contour plot of the numerical solution at $t=40$, respectively. In the second row, we present a mesh plot of the distribution function and the corresponding adaptive weight function selected by the LoMaC projection at $t=40$.}
\label{fig:TSI_results}
\end{figure}

\end{example}

\begin{example}(1D--1V Bump-on-tail instability). Consider the initial condition
\begin{equation*}
\begin{split}
f(x,v,t=0) = (1+\alpha\cos(kx))\left(n_p\exp\left(-\frac{v^2}{2}\right) + n_b\exp\left(-\frac{(v-u)^2}{2v_t}\right)\right),
\end{split}
\end{equation*}
with $\Omega_{x}=[0,20\pi/3]$, $\Omega_{v}=[-13,13]$, where \(\alpha = 0.04\), \(k = 0.3\), \(n_p = 9/(10\sqrt{2\pi})\), \(n_b = 2/(10\sqrt{2\pi})\), \(u=4.5\), and \(v_t=0.5\). A mesh of $256\times256$ and a CFL of $5$ are used for the simulation. 

\Cref{fig:BOT_results} shows representative results for the bump-on-tail test. The first row displays the electric energy, the rank history, and a contour plot of the numerical solution at \(t=40\). The electric energy exhibits the expected linear growth followed by nonlinear saturation. The rank increases gradually as phase-space filamentation develops. The contour plot shows the asymmetric deformation induced by the tail perturbation. The method, again, captures high-velocity tails and nonlinear trapping structures. The second row shows a mesh plot of the distribution function and the corresponding adaptive weight selected by the LoMaC projection at \(t=40\). The distribution develops a localized nonlinear structure in phase space. The adaptive weight reflects this asymmetric velocity space profile, capturing both the dominant Maxwellian-like bulk and the secondary structure observed in the high-velocity tail while preserving the overall velocity decay. The conservation results are similar to those for the Landau damping test and are again omitted for conciseness.

\begin{figure}[!htbp]
\centering

    \hspace{-0.4cm}
    \subfigure{
    \includegraphics[width=0.33\textwidth]{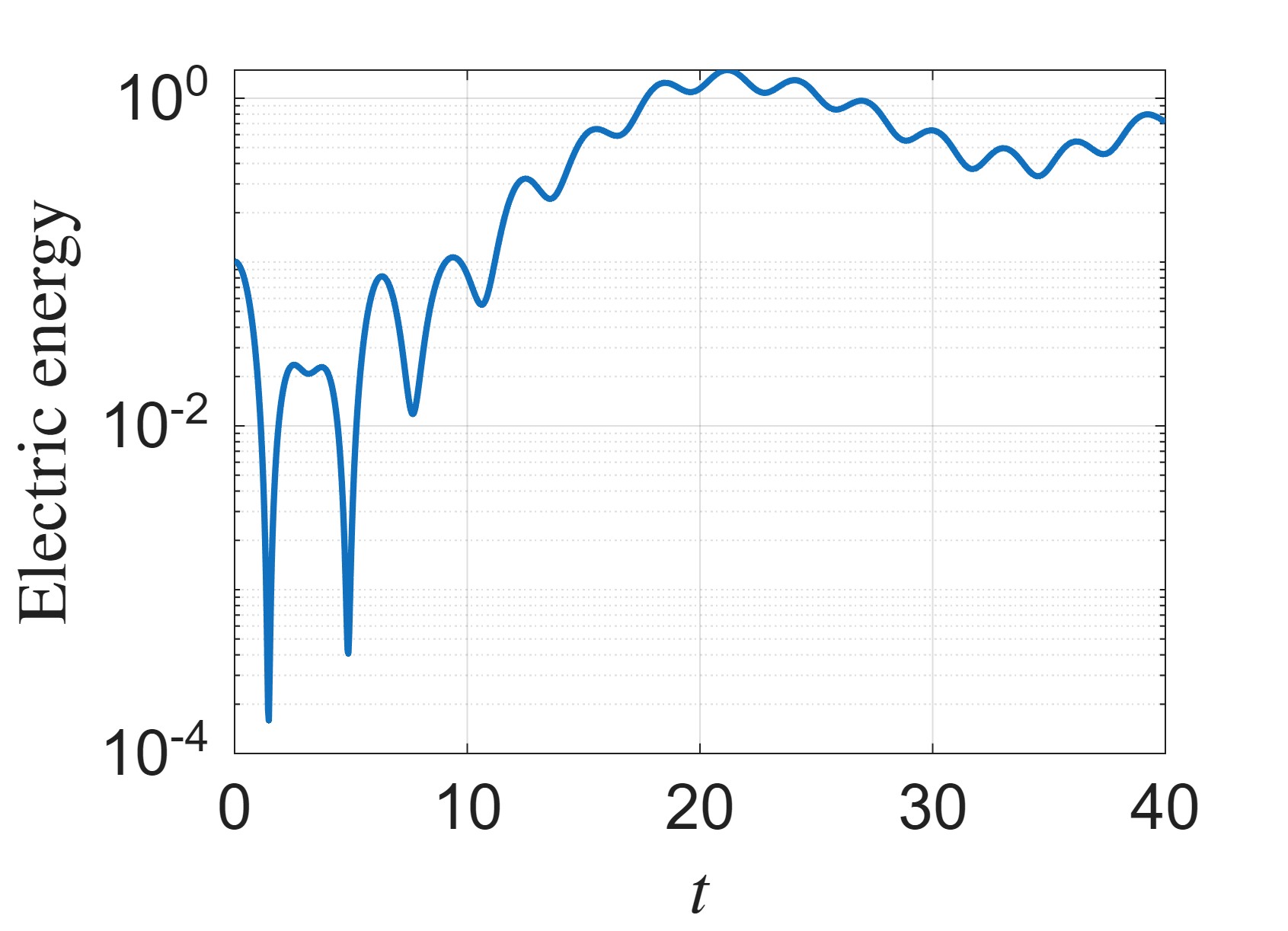}
    }
    \hspace{-0.55cm}
    \subfigure{
    \includegraphics[width=0.33\textwidth]{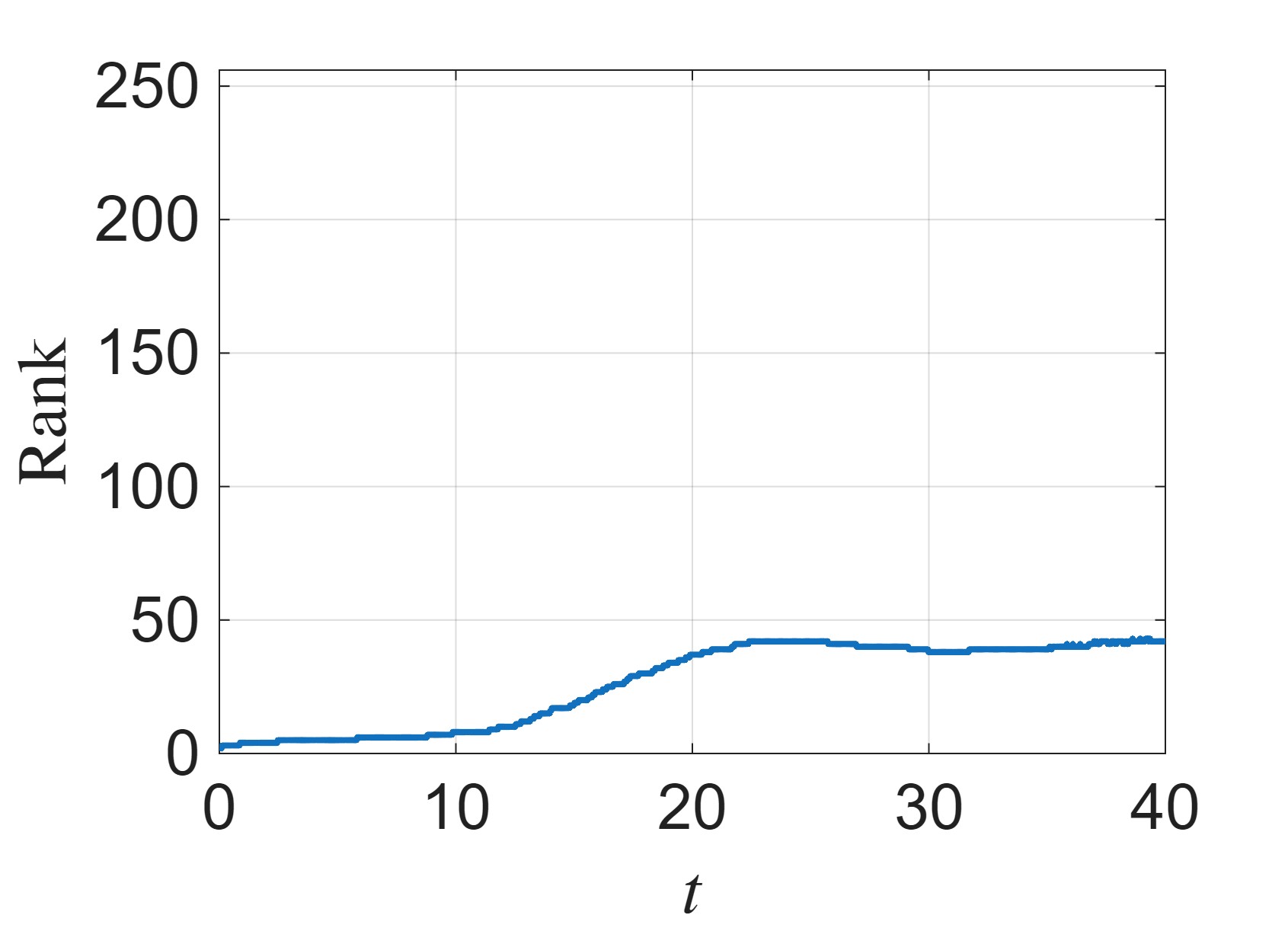}
    }
    \hspace{-0.55cm}
    \subfigure{
    \includegraphics[width=0.33\textwidth]{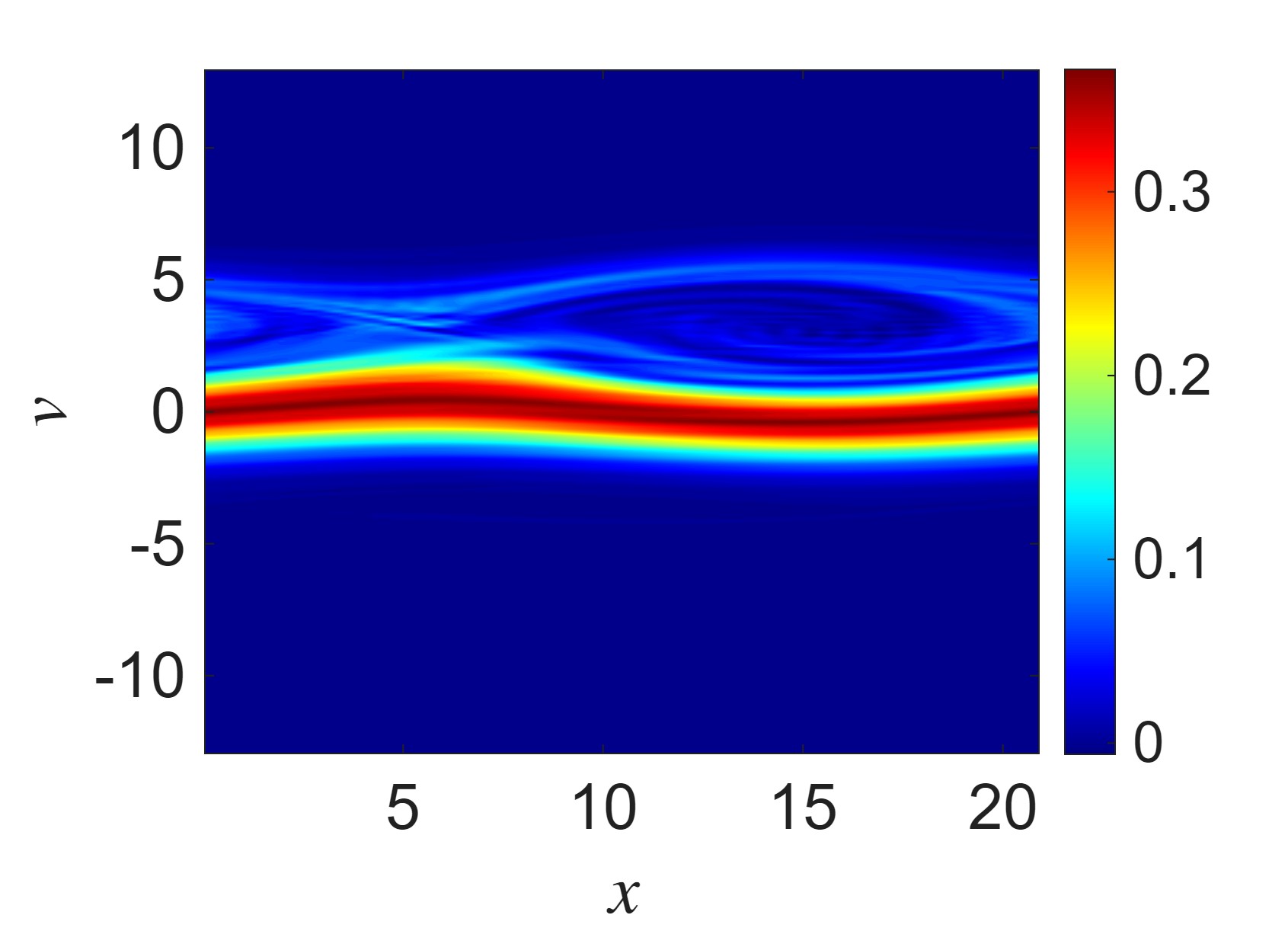}
    }

    \hspace{-0.4cm}
    \subfigure{
    \includegraphics[width=0.4\textwidth]{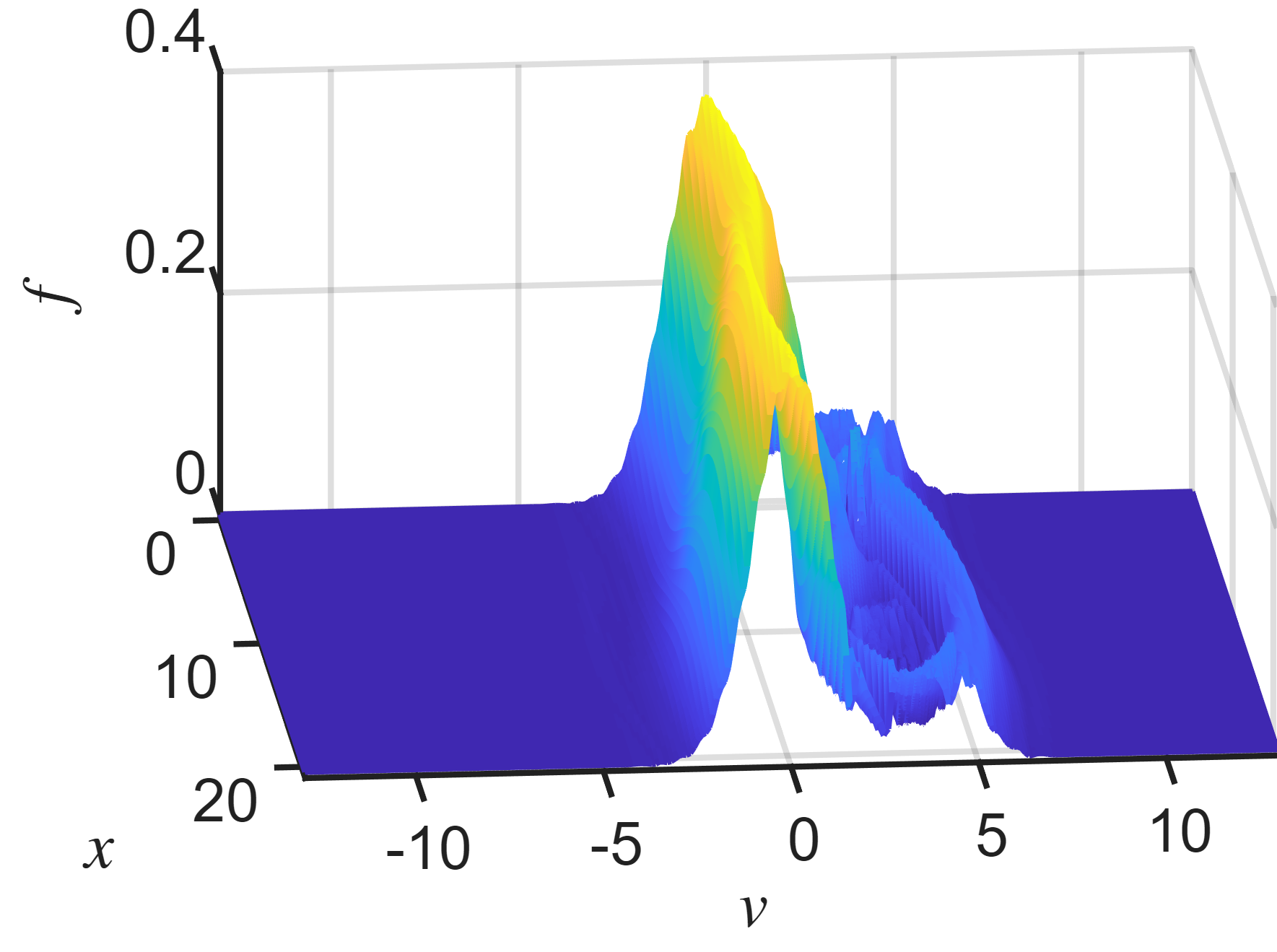}
    }
    \subfigure{
    \includegraphics[width=0.4\textwidth]{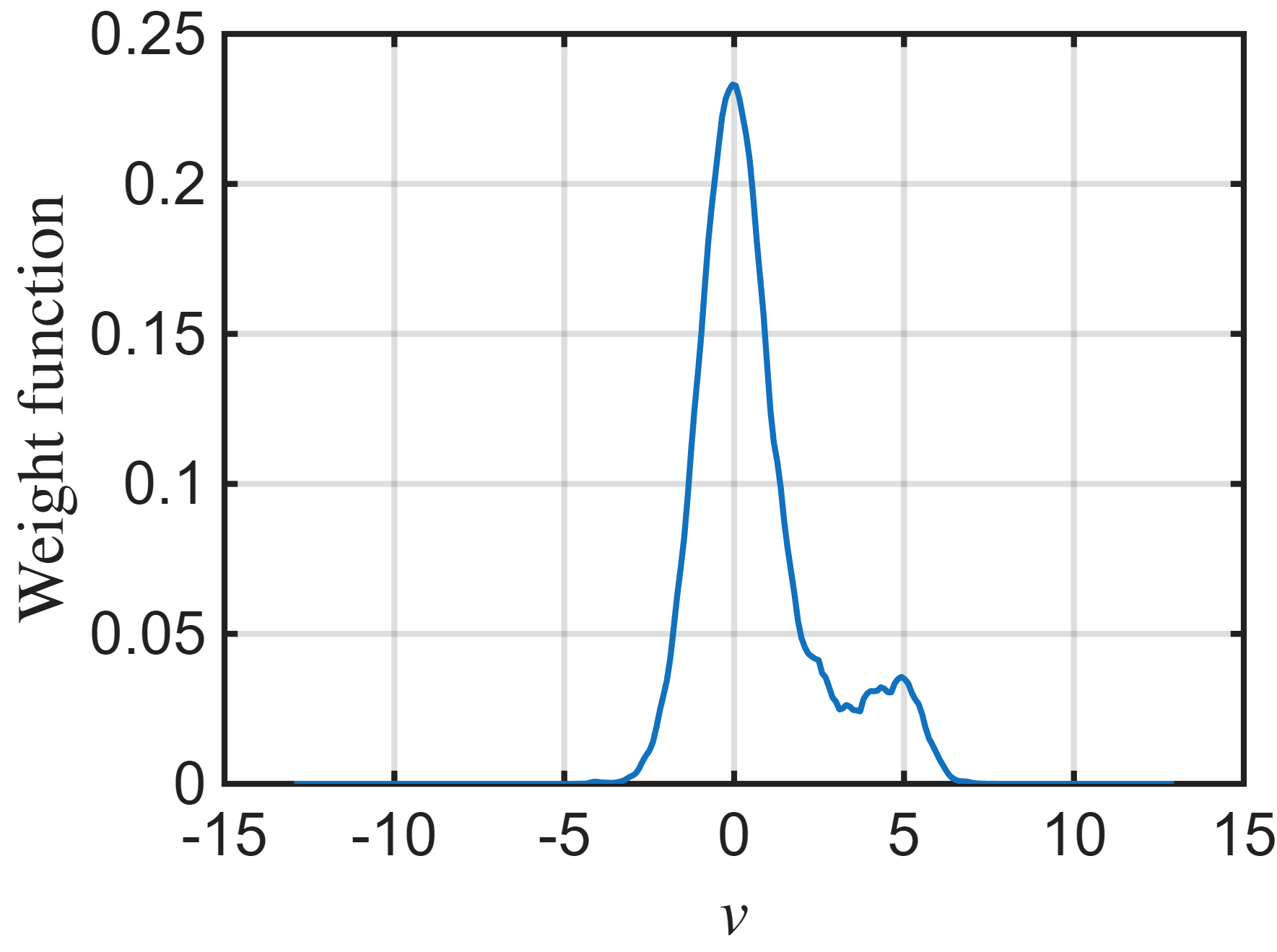}
    }
    \caption{(1D--1V Bump-on-tail instability). The first row displays the time evolution of the electric energy, the rank history, and a contour plot of the numerical solution at $t=40$, respectively. In the second row, we present a mesh plot of the distribution function and the corresponding adaptive weight function selected by the LoMaC projection at $t=40$.}
\label{fig:BOT_results}
\end{figure}

\end{example}

\subsection{2D--2V Examples}\label{sec:2D2V_VP_tests}
In this subsection, we present two 2D--2V benchmark tests: two-stream instability
and bump-on-tail instability. The computations use a mesh of $128^4$ with a CFL
number of 5.
Through these tests, we investigate the robustness, adaptive-rank behavior, and
conservation properties of the proposed scheme.
\begin{example}(2D--2V two-stream instability, \cite{kormann2015semi}). Consider the initial condition
\begin{equation*}
\begin{aligned}
    f(\bm{x},\bm{v},0) &=
    \frac{1}{(2\sqrt{2\pi})^2}
    \left(1 + \alpha \sum_{\mu=1}^{2} \cos\!\big(k x^{(\mu)}\big)\right) \\
    &\quad \times \prod_{\mu=1}^{2}
    \Bigl[
        \exp\!\left(-\tfrac{(v^{(\mu)}-v_0)^2}{2}\right)
        + \exp\!\left(-\tfrac{(v^{(\mu)}+v_0)^2}{2}\right)
    \Bigr],
\end{aligned}
\end{equation*}
on $\Omega_{\bm x}=[0,10\pi]^2$ and $\Omega_{\bm v}=[-8,8]^2$,
where $k=0.2$, $\alpha=0.001$, and $v_0=2.4$. For this test, the HTACA tolerance is set to $\varepsilon_{\mathrm Base} = 5\times10^{-4}$.

\Cref{fig:2D2V_TSI_results} shows representative results for the 2D--2V two-stream instability. The first row displays the time evolution of the electric energy, the HT rank history, and a representative \((x^{(1)},v^{(1)})\)-slice of the distribution at \(t=40\). As expected, the electric energy undergoes rapid linear growth, followed by nonlinear saturation. The rank history shows that both the non-leaf ranks \(r_{12}\) and \(r_{34}\) and the leaf ranks \(r_1,\dots,r_4\) increase as nonlinear filamentation develops, with the leaf ranks growing more noticeably. The rightmost panel in the first row shows a two-dimensional slice of the four-dimensional distribution at \(x^{(2)} = x^{(2)}_1 = \Delta x^{(2)}/2\) and \(v^{(2)} = v^{(2)}_{70} = 11/16\). The characteristic vortex-like trapping structures and strong filamentation are observed, demonstrating that the method captures the nonlinear interaction. The second row shows the conservation properties. The relative or absolute errors in total mass, momentum, and total energy remain at the level of the nonlinear solver tolerance throughout the simulation. Here, we only show the error in the first component of momentum; the second component behaves similarly. Without LoMaC, the corresponding errors are \(\mathcal{O}(10^{-4})\) for mass and energy and \(\mathcal{O}(10^{-2})\) for momentum. The last row shows a representative \((v^{(1)},v^{(2)})\)-slice of the distribution and the corresponding adaptive weight selected by the LoMaC projection at \(t=40\). The slice exhibits multiple localized peaks in velocity space, reflecting the development of nontrivial velocity space structures in the nonlinear regime. The adaptive weight captures the dominant multi-peak profile and the decay in the velocity domain.

\begin{figure}[!htbp]
\centering

    \hspace{-0.4cm}
    \subfigure{
    \includegraphics[width=0.32\textwidth]{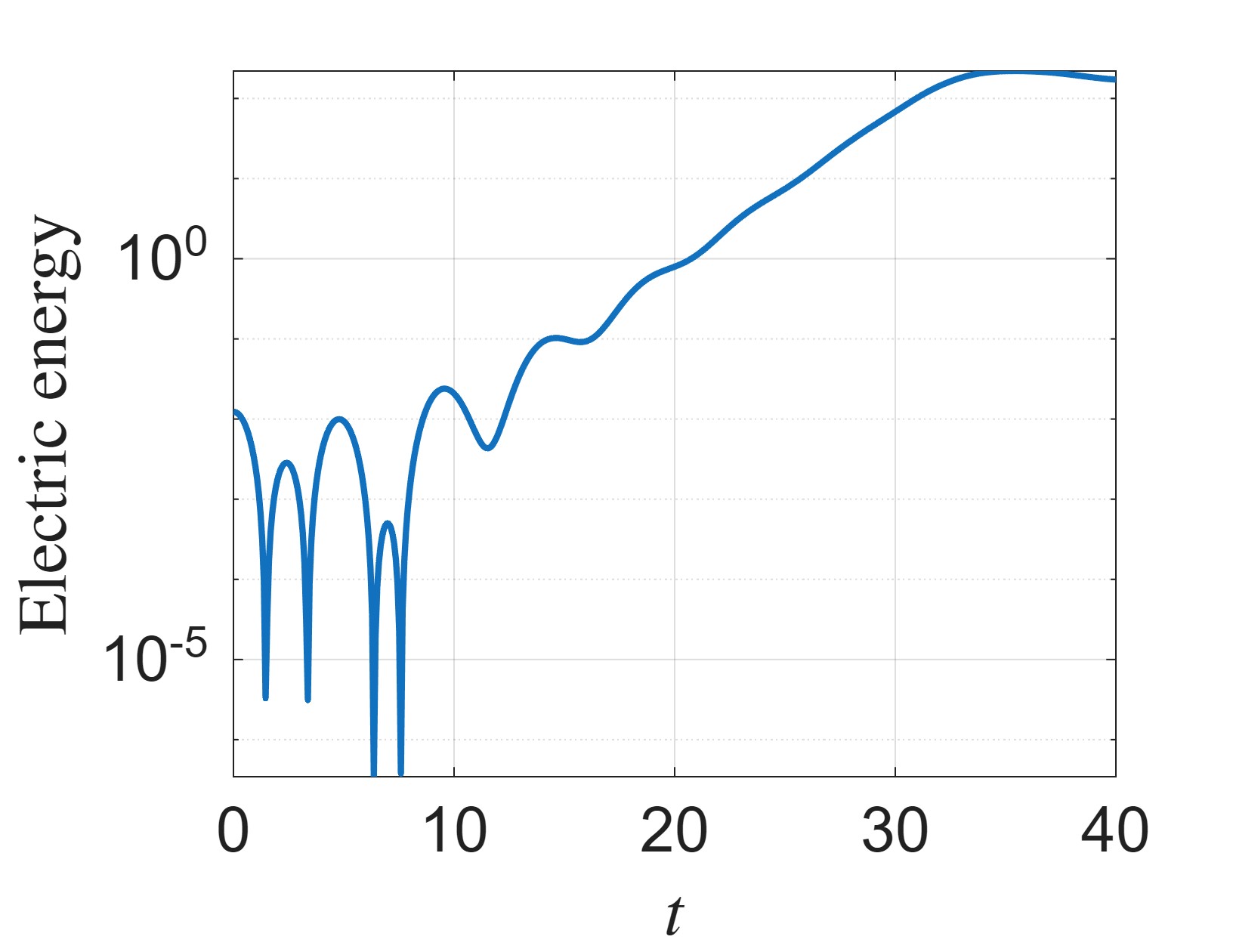}
    }
    \hspace{-0.55cm}
    \subfigure{
    \includegraphics[width=0.32\textwidth]{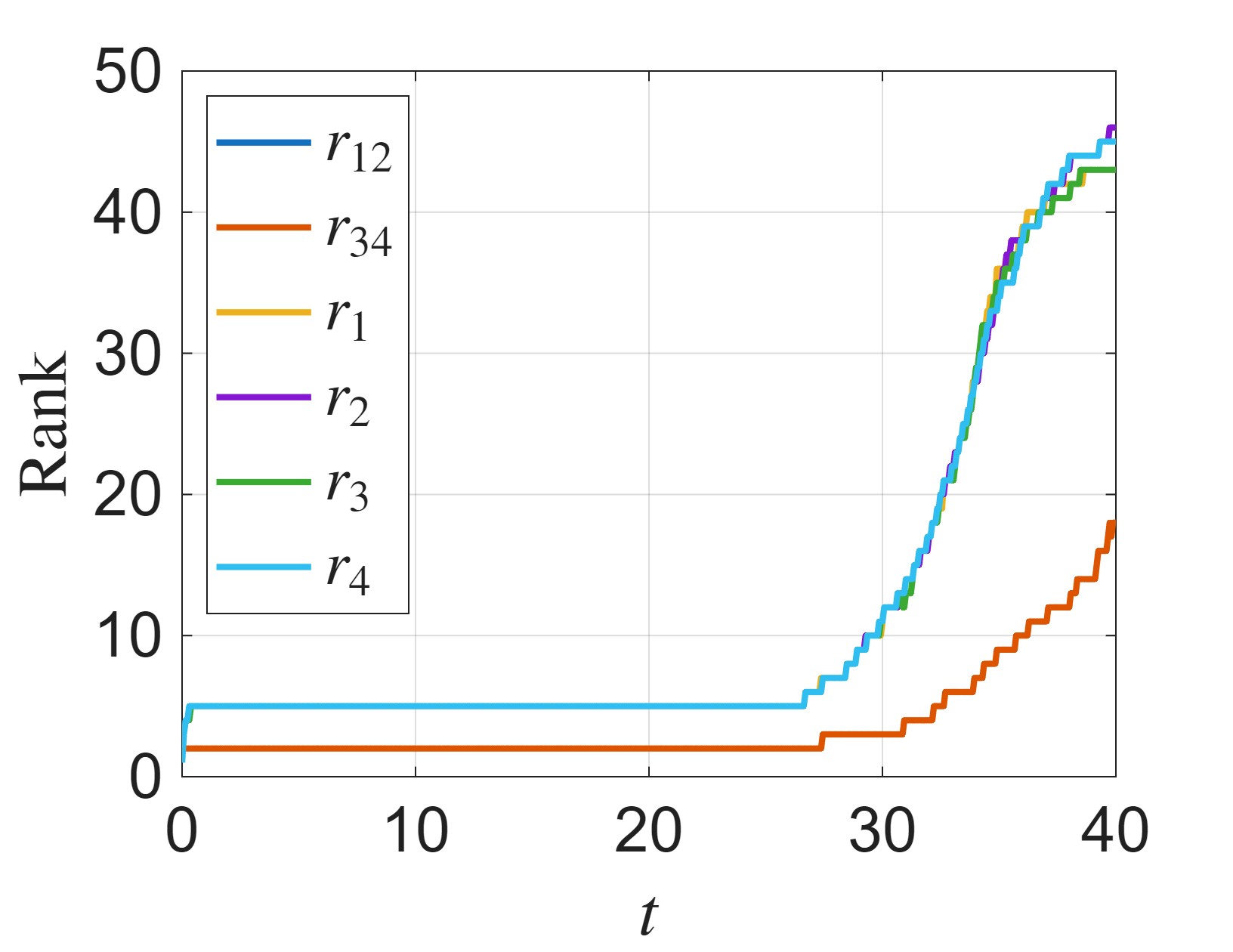}
    }
    \hspace{-0.55cm}
    \subfigure{
    \includegraphics[width=0.32\textwidth]{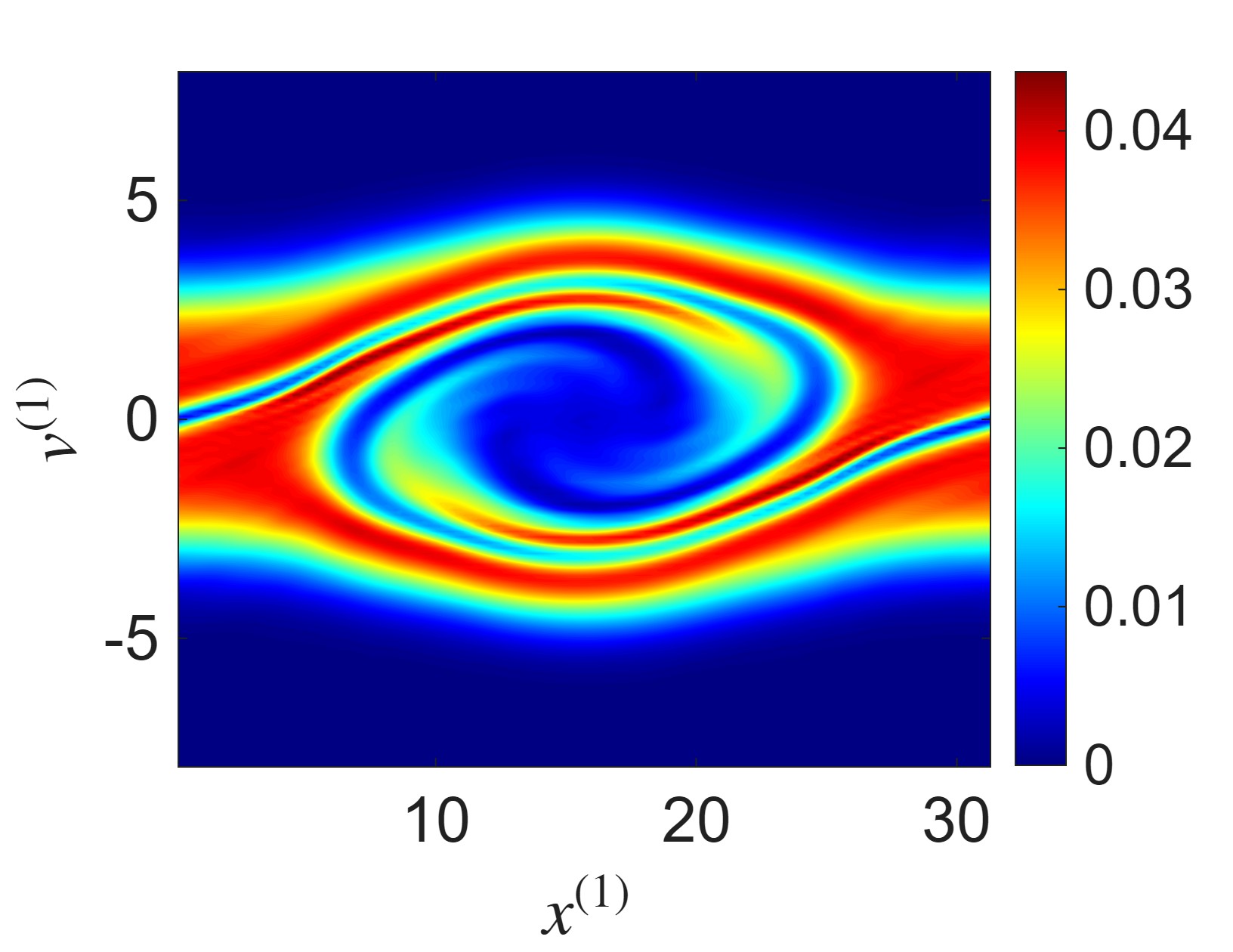}
    }

    \vspace{-0.25cm}

    \hspace{-0.4cm}
    \subfigure{
    \includegraphics[width=0.32\textwidth]{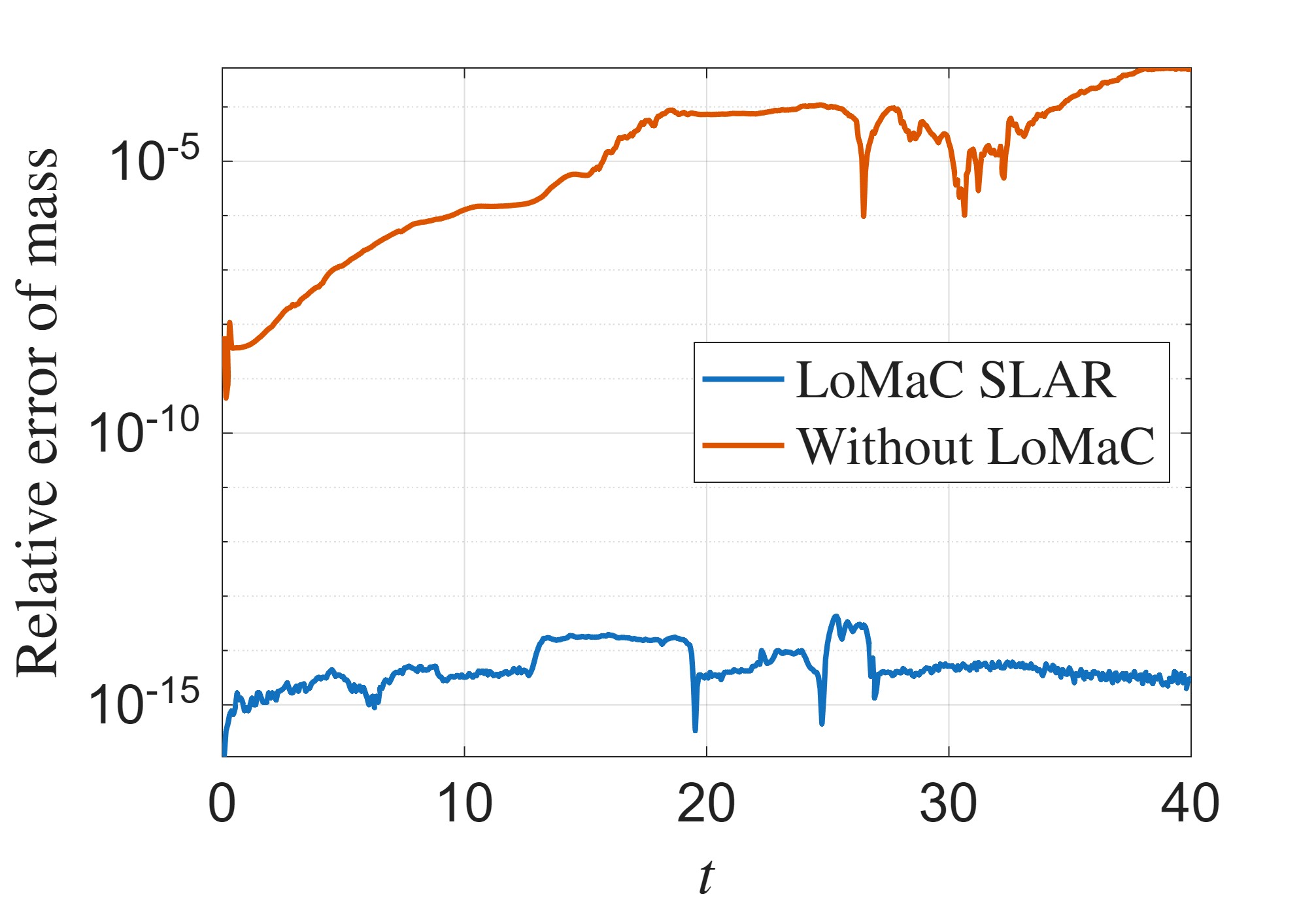}
    }
    \hspace{-0.55cm}
    \subfigure{
    \includegraphics[width=0.32\textwidth]{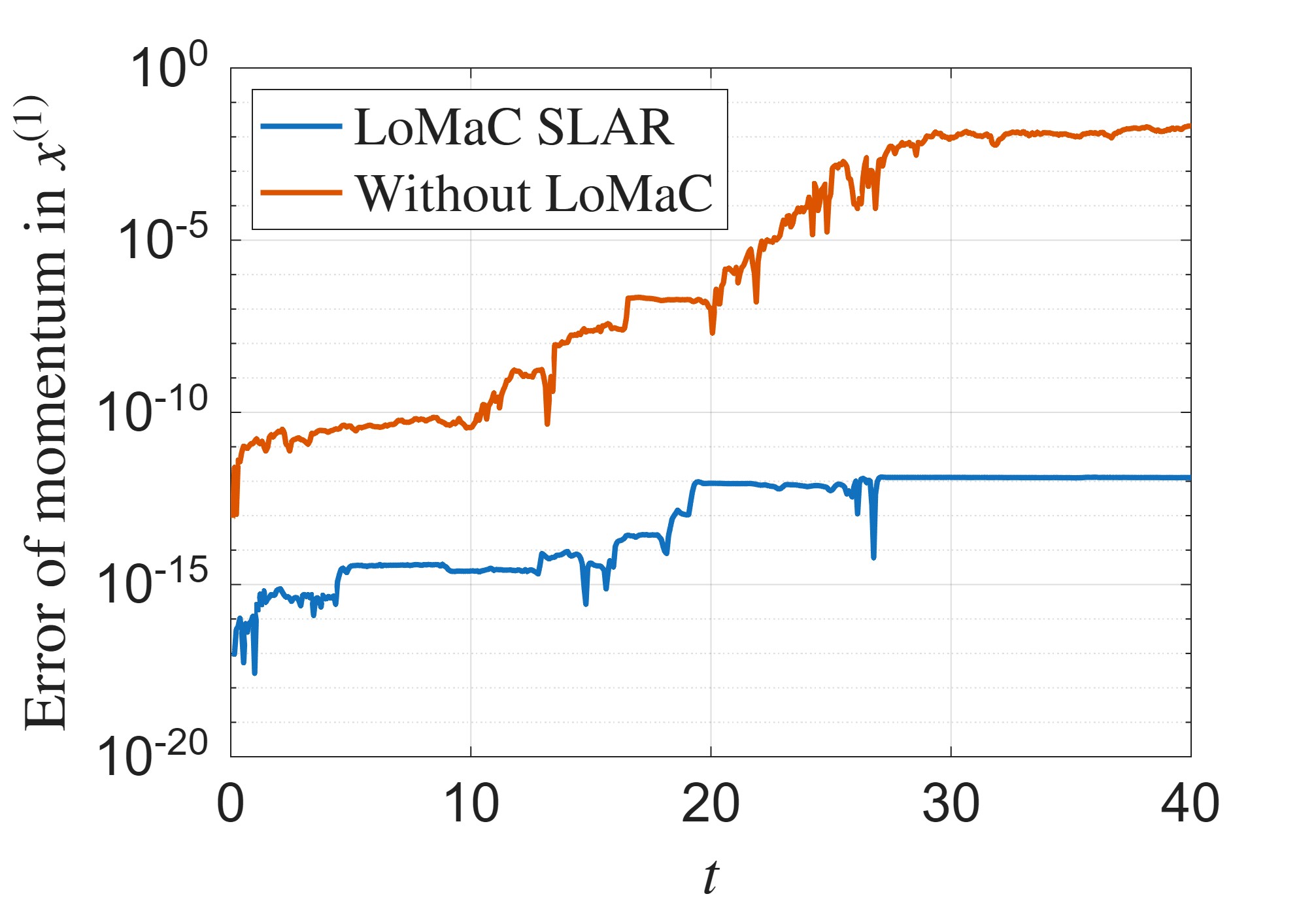}
    }
    \hspace{-0.55cm}
    \subfigure{
    \includegraphics[width=0.32\textwidth]{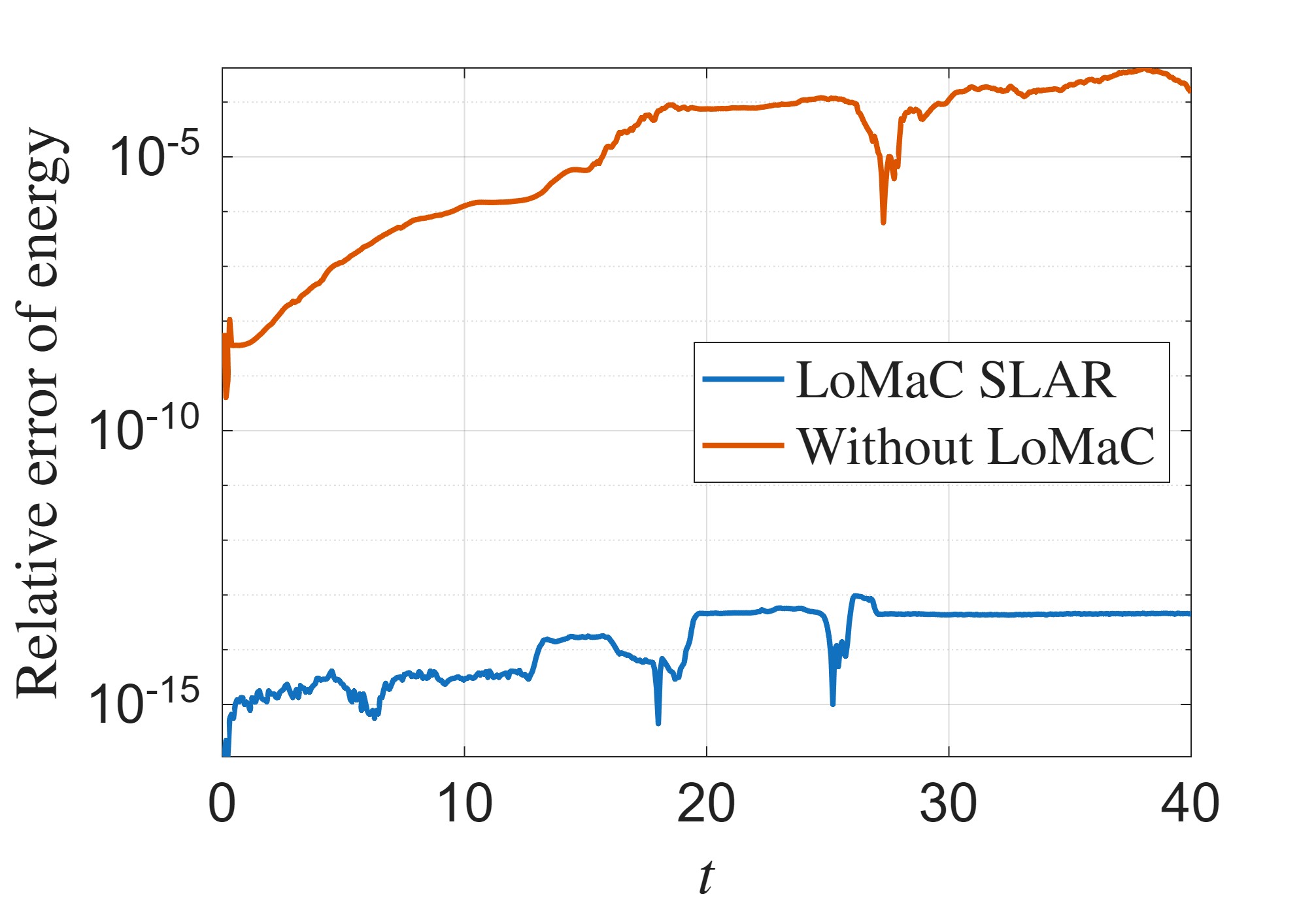}
    }

    \vspace{-0.5cm}

    \hspace{-0.4cm}
    \subfigure{
    \includegraphics[width=0.45\textwidth]{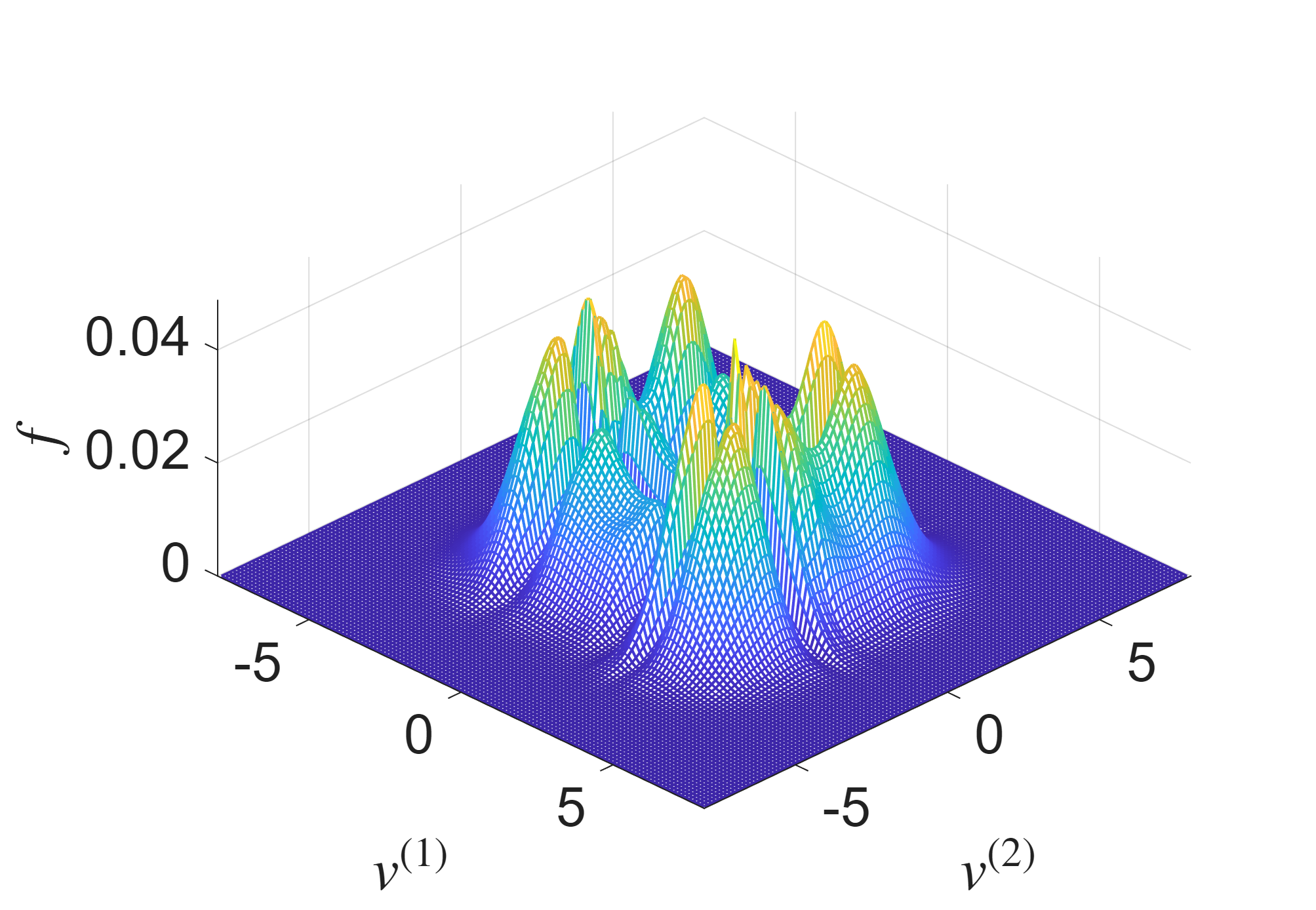}
    }
    \hspace{-0.55cm}
    \subfigure{
    \includegraphics[width=0.45\textwidth]{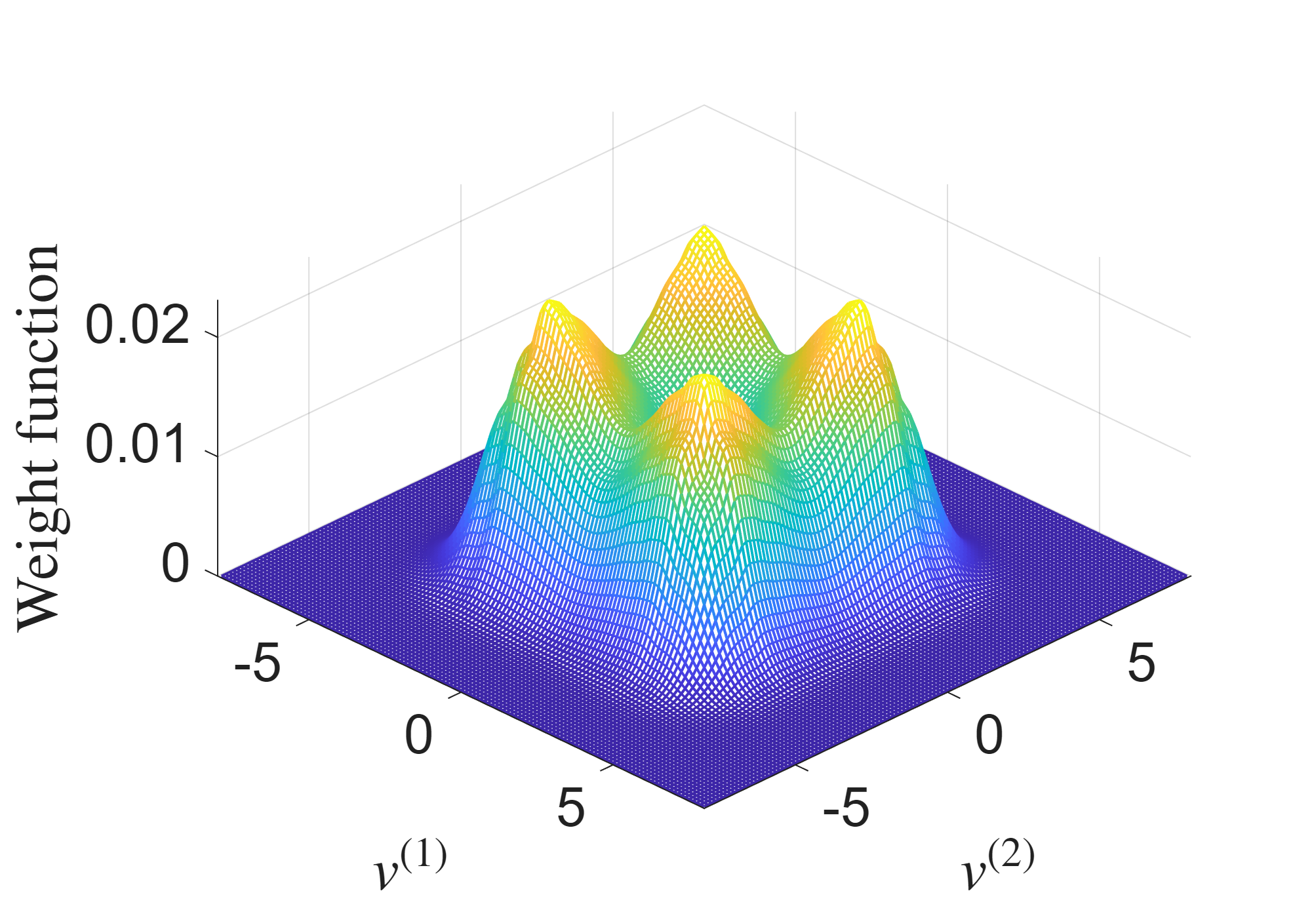}
    }

 \caption{(2D--2V two-stream instability). The first row shows the time evolution of the electric energy, the HT rank history, and a contour plot of the distribution slice at \(x^{(2)} = x^{(2)}_1 = \Delta x^{(2)}/2\) and \(v^{(2)} = v^{(2)}_{70} = 11/16\) and $t=40$. The second row shows the relative errors in mass, the absolute error in the first component of momentum and the relative error in the total energy. The last row shows the mesh plots of a selected $(v^{(1)},v^{(2)})$-slice of the distribution function and the corresponding adaptive weight function selected by the LoMaC projection at $t=40$.}
\label{fig:2D2V_TSI_results}
\end{figure}

\begin{figure}[!htbp]
\centering

    \hspace{-0.4cm}
    \subfigure{
    \includegraphics[width=0.33\textwidth]{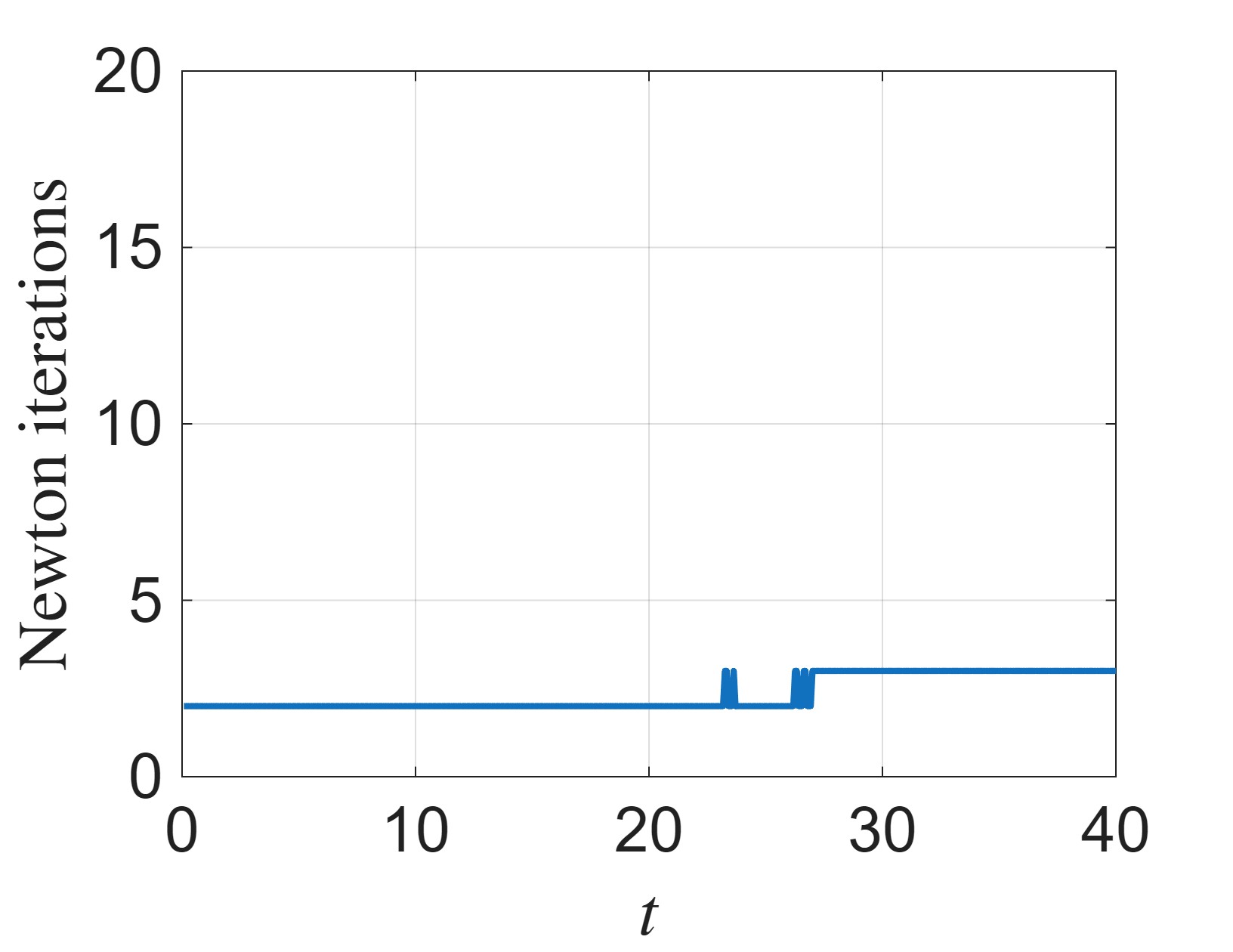}
    }
    \hspace{-0.55cm}
    \subfigure{
    \includegraphics[width=0.33\textwidth]{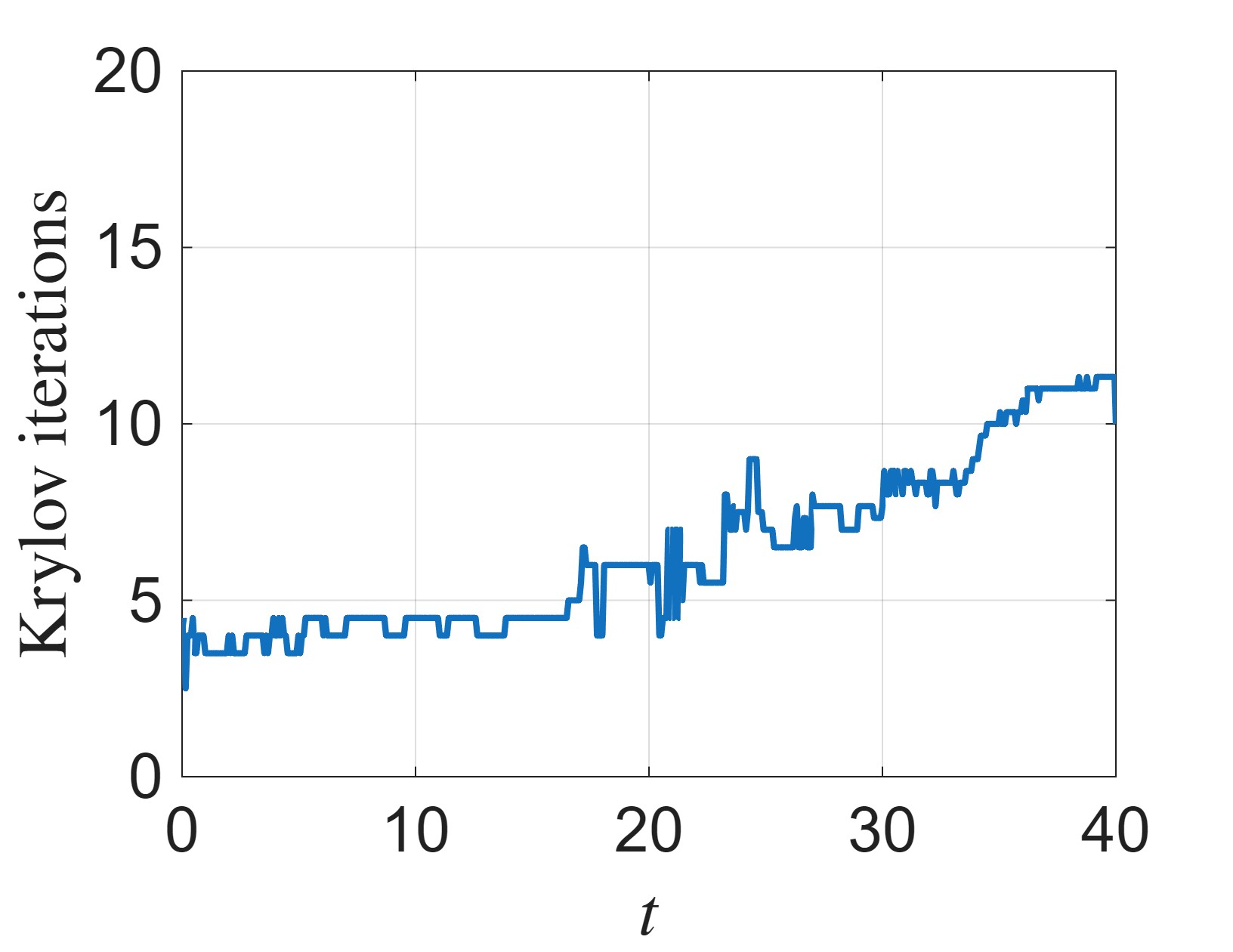}
    }
    \hspace{-0.55cm}
    \subfigure{
    \includegraphics[width=0.35\textwidth]{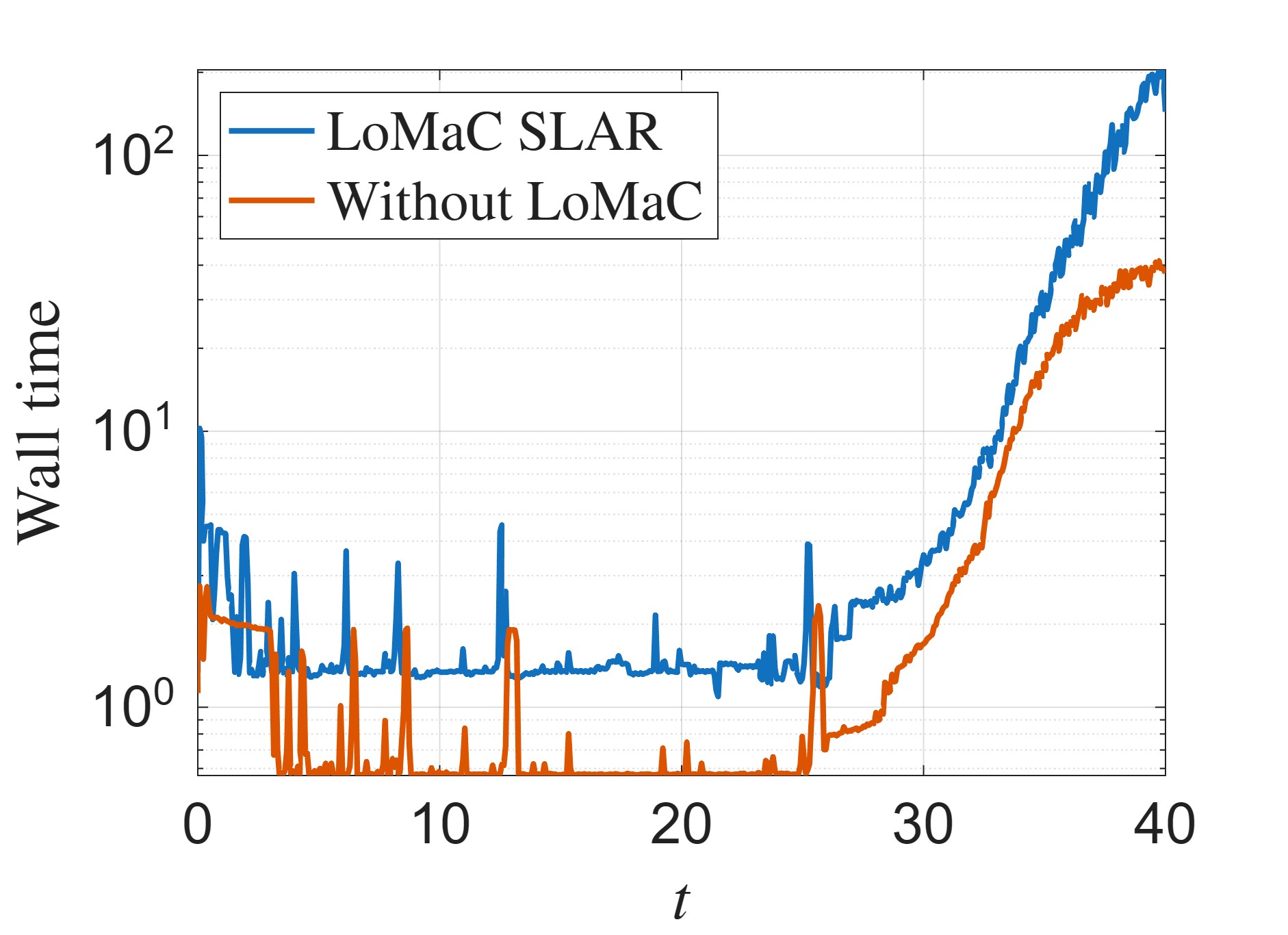}
    }

 \caption{(2D--2V two-stream instability). JFNK solver and performance diagnostics. The panels show the number of Newton iterations, number of Krylov iterations, and wall-clock time in seconds.}
  \label{fig:2D2V_TSI_solver}
\end{figure}

\Cref{fig:2D2V_TSI_solver} shows the performance of the nonlinear macroscopic solver and the wall-clock time history of the numerical scheme with and without LoMaC. The average Newton iteration count remains nearly constant, while the Krylov iteration count increases only moderately as finer structures develop, indicating robust JFNK performance in this high-dimensional setting. The right panel compares the wall-clock time with and without the LoMaC correction. At later times, the LoMaC-corrected SLAR solver becomes increasingly expensive, whereas the macroscopic solver iteration counts increase only mildly. This suggests that the additional cost is mainly caused by reduced compression efficiency in the adaptive-rank representation, rather than by a substantial increase in the difficulty of the nonlinear macroscopic solve. This behavior is expected, since the original HTACA algorithm proposed in \cite{zheng2025semihighD} is sensitive to the rank of the solution, and the current MATLAB implementation does not include rank-level parallelization. Additionally, while the LoMaC correction successfully enforces the local conservation laws, it also leads to an increase in the ranks of the kinetic solution. This rank growth makes rank-level parallelism an important consideration for low-rank kinetic solvers and motivates our ongoing work on an enhanced HTACA algorithm.

\end{example}

\begin{example}(2D--2V bump-on-tail instability). Consider the initial condition
\begin{equation*}
\begin{aligned}
    f(\bm{x},\bm{v},0)
    &=
    \frac{1}{(2\sqrt{2\pi})^2}
    \left(
        1+\alpha\prod_{\mu=1}^{2}\cos\!\big(kx^{(\mu)}\big)
    \right)
    \\
    &\quad \times
    \left[
        n_p
        \exp\!\left(
            -\frac{(v^{(1)})^2+(v^{(2)})^2}{2}
        \right)
        +
        n_b
        \exp\!\left(
            -\frac{(v^{(1)}-u)^2+(v^{(2)})^2}{2v_t}
        \right)
    \right],
\end{aligned}
\end{equation*}
with $\Omega_{\bm{x}} = [0,20\pi/3]^2$ and $\Omega_{\bm{v}} = [-13,13]^2$, where \(k = 0.3\), \(\alpha = 0.04\), \(n_p = 17/(40\pi)\), \(n_b = 3/(40\pi v_t)\), \(v_t = 2.4\), and \(u = 4.5\). Unlike the classical 1D--1V bump-on-tail test, the present 2D--2V configuration is constructed as a high-dimensional extension for which no direct reference solution or established benchmark
is available in the literature. For this test, the HTACA tolerance is set to $\varepsilon_{\mathrm Base} = 10^{-4}$.

\Cref{fig:2D2V_BOT_results} shows representative results for the 2D--2V bump-on-tail instability. The first row displays the time evolution of the electric energy, the HT rank history, and a representative \((x^{(1)},v^{(1)})\)-slice of the distribution at \(t=30\). The electric energy exhibits oscillations at early times, followed by a gradual growth as the nonlinear instability develops. The HT ranks increase throughout the simulation, with a particularly rapid growth of the non-leaf ranks \(r_{12}\) and \(r_{34}\) at later times, indicating the increasing complexity of the velocity space structures. The right panel in the first row shows an asymmetric deformation of the distribution caused by the drifting bump, together with the resulting nonlinear trapping structures. The second row shows the relative error in total mass, the absolute error in the first component of momentum, and the relative error in total energy. The errors remain at the level of the nonlinear solver tolerance throughout the simulation. Without LoMaC, the corresponding errors are \(\mathcal{O}(10^{-4})\) for mass and energy and \(\mathcal{O}(10^{-1})\) for momentum. The momentum drift without LoMaC is severe for this test. The last row shows a representative \((v^{(1)},v^{(2)})\)-slice of the distribution function and the corresponding adaptive weight selected by the LoMaC projection at \(t=30\). The adaptive weight closely follows the dominant velocity space profile of the selected distribution slice, capturing both the primary core and the secondary bump structure while preserving the decay in the velocity domain.

\begin{figure}[!htbp]
\centering

    \hspace{-0.4cm}
    \subfigure{
    \includegraphics[width=0.32\textwidth]{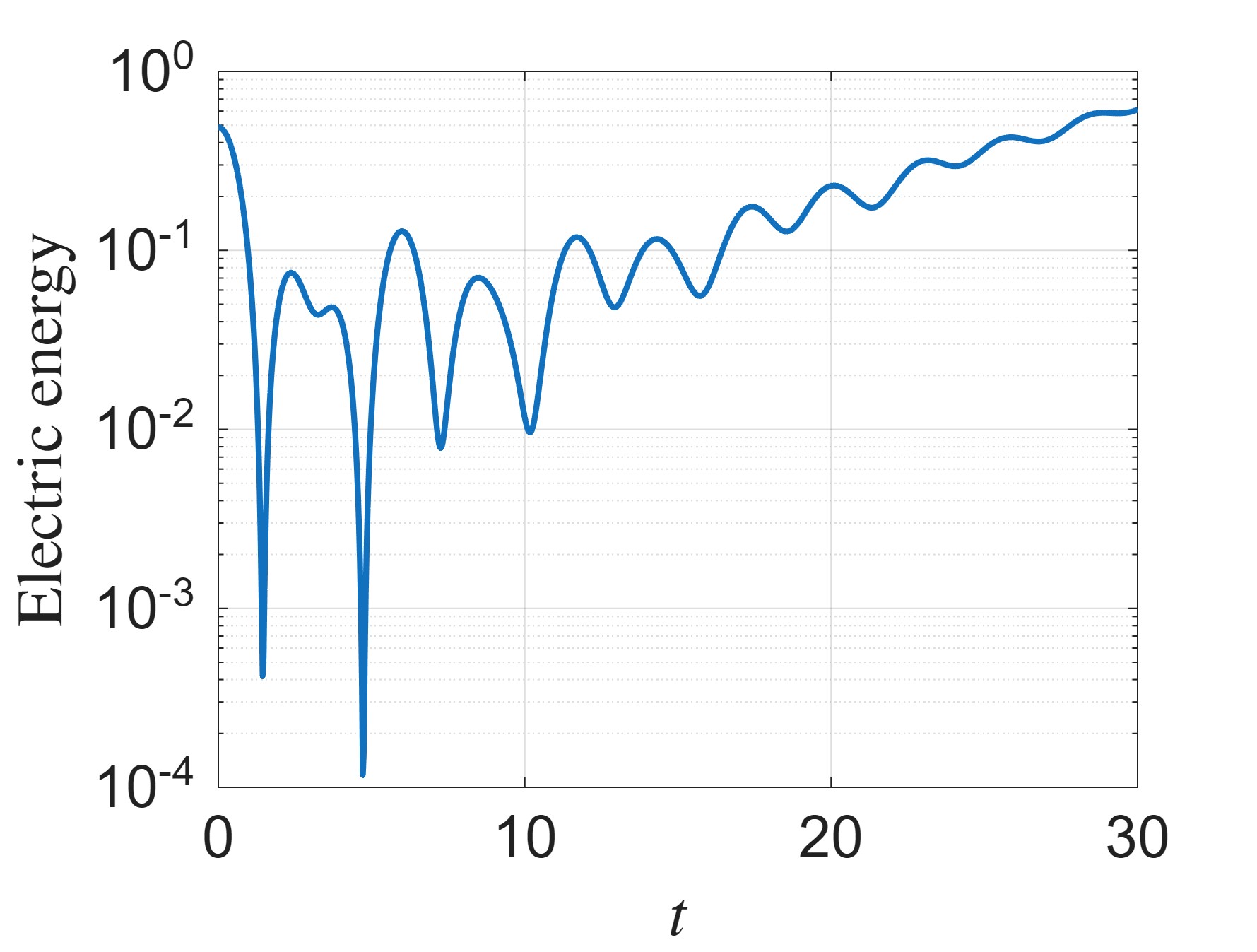}
    }
    \hspace{-0.55cm}
    \subfigure{
    \includegraphics[width=0.32\textwidth]{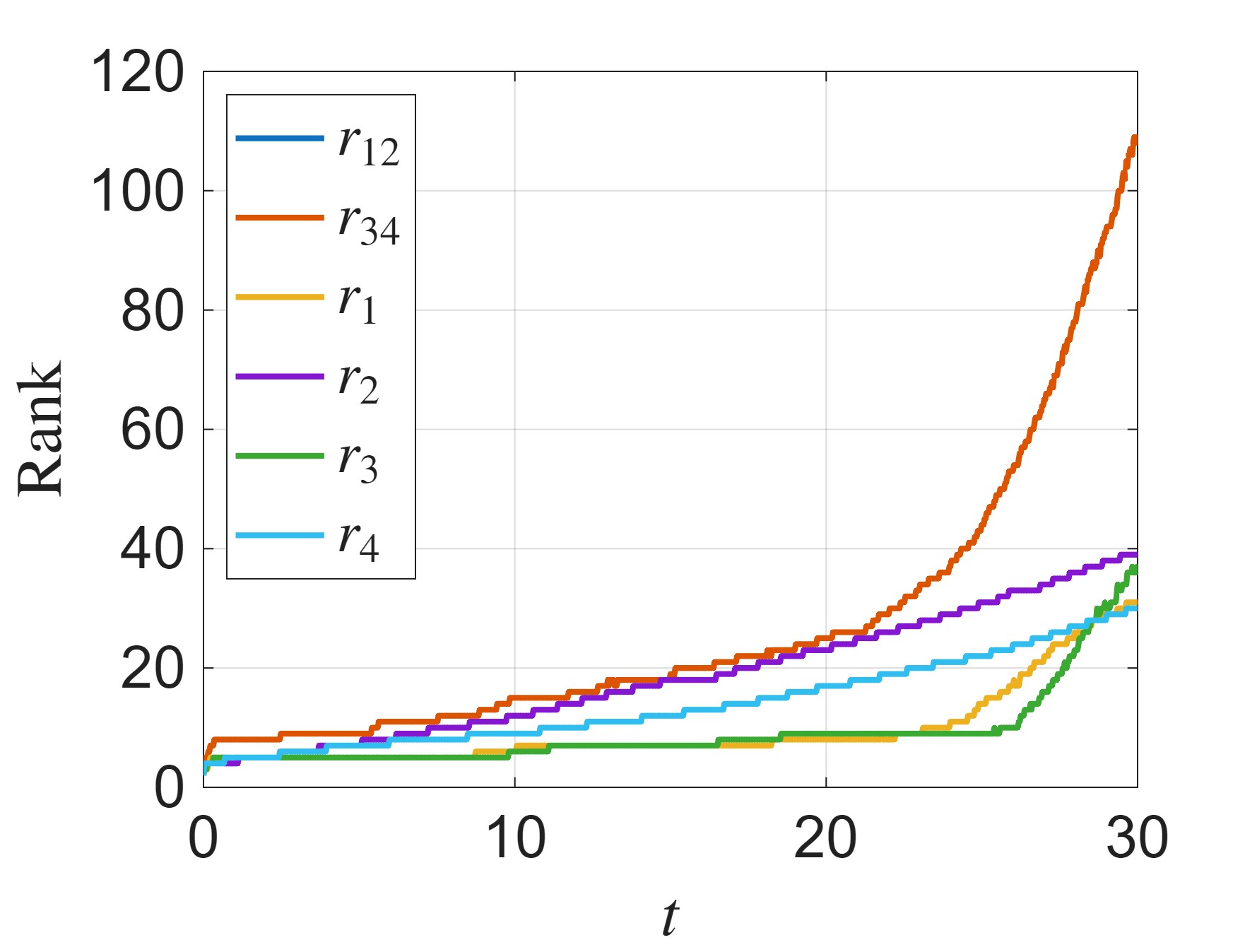}
    }
    \hspace{-0.55cm}
    \subfigure{
    \includegraphics[width=0.32\textwidth]{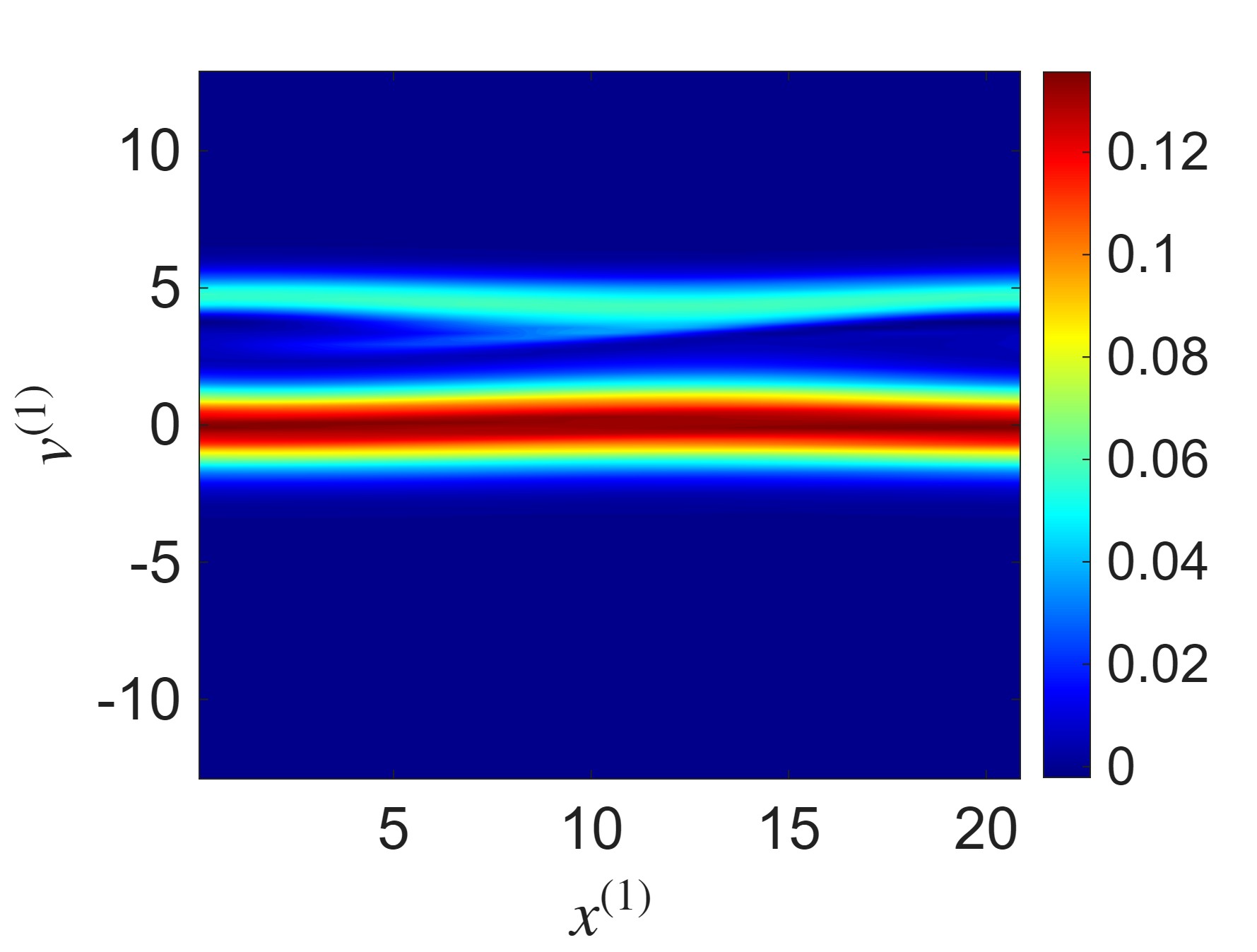}
    }

    \vspace{-0.25cm}

    \hspace{-0.4cm}
    \subfigure{
    \includegraphics[width=0.32\textwidth]{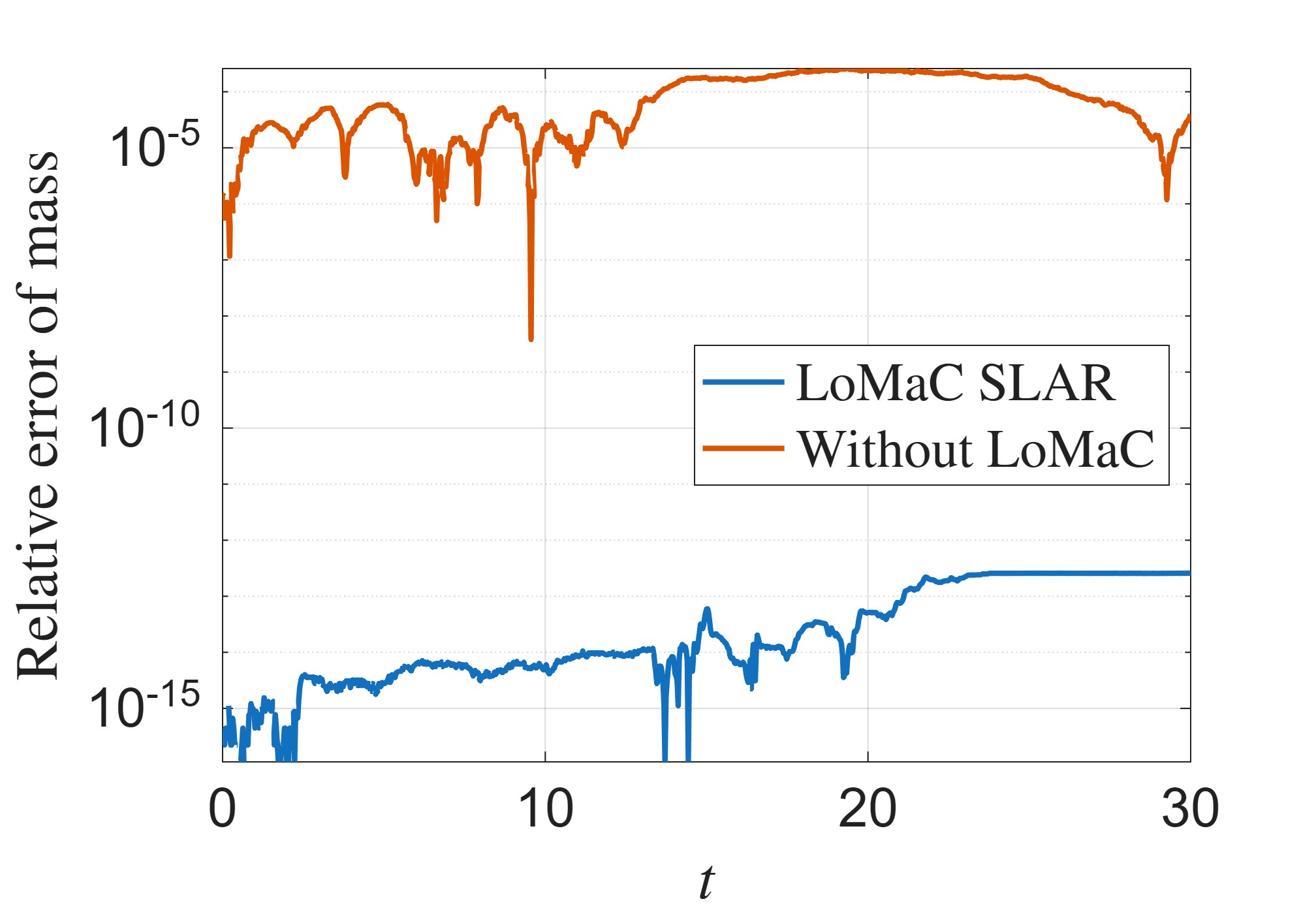}
    }
    \hspace{-0.55cm}
    \subfigure{
    \includegraphics[width=0.32\textwidth]{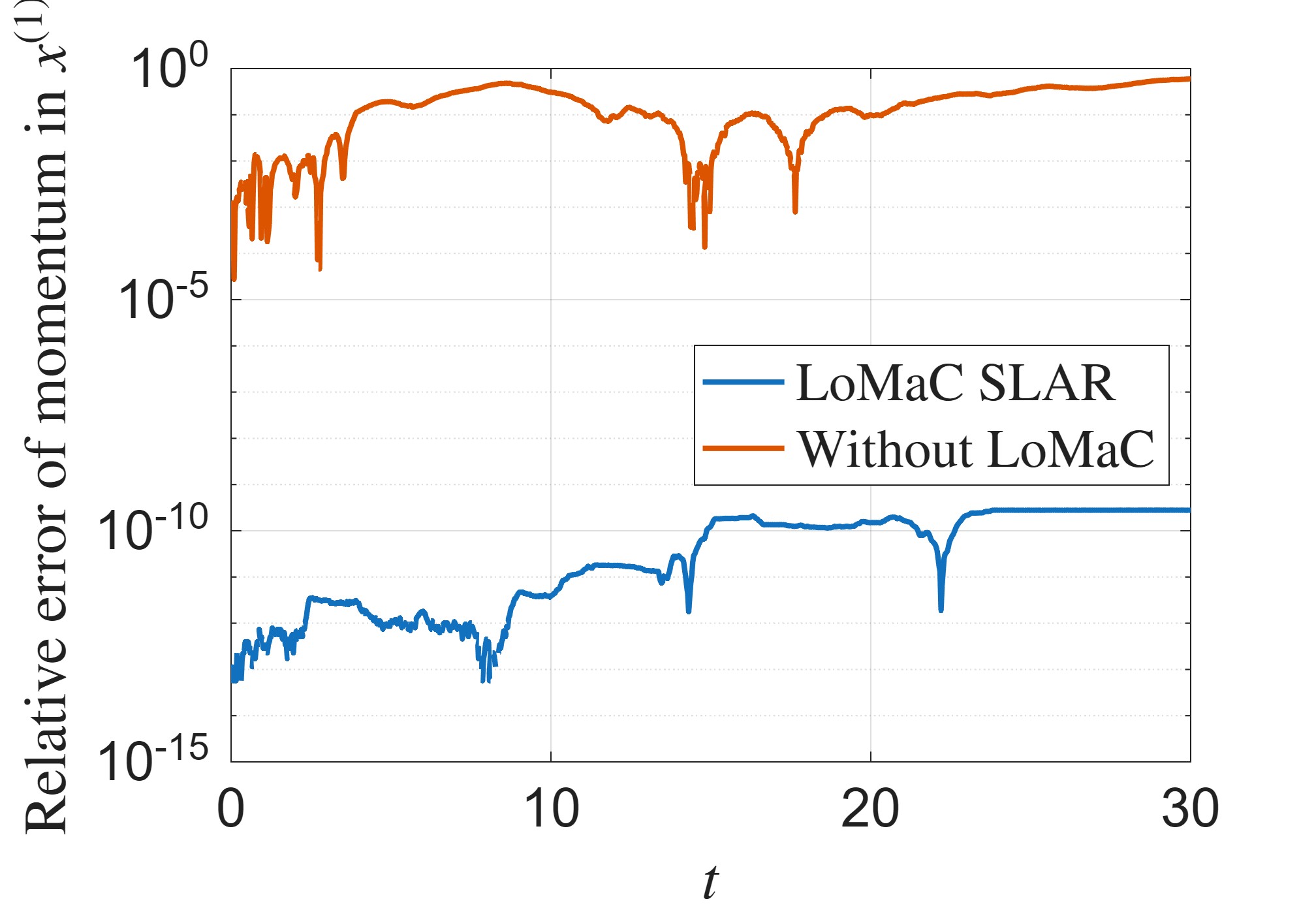}
    }
    \hspace{-0.55cm}
    \subfigure{
    \includegraphics[width=0.32\textwidth]{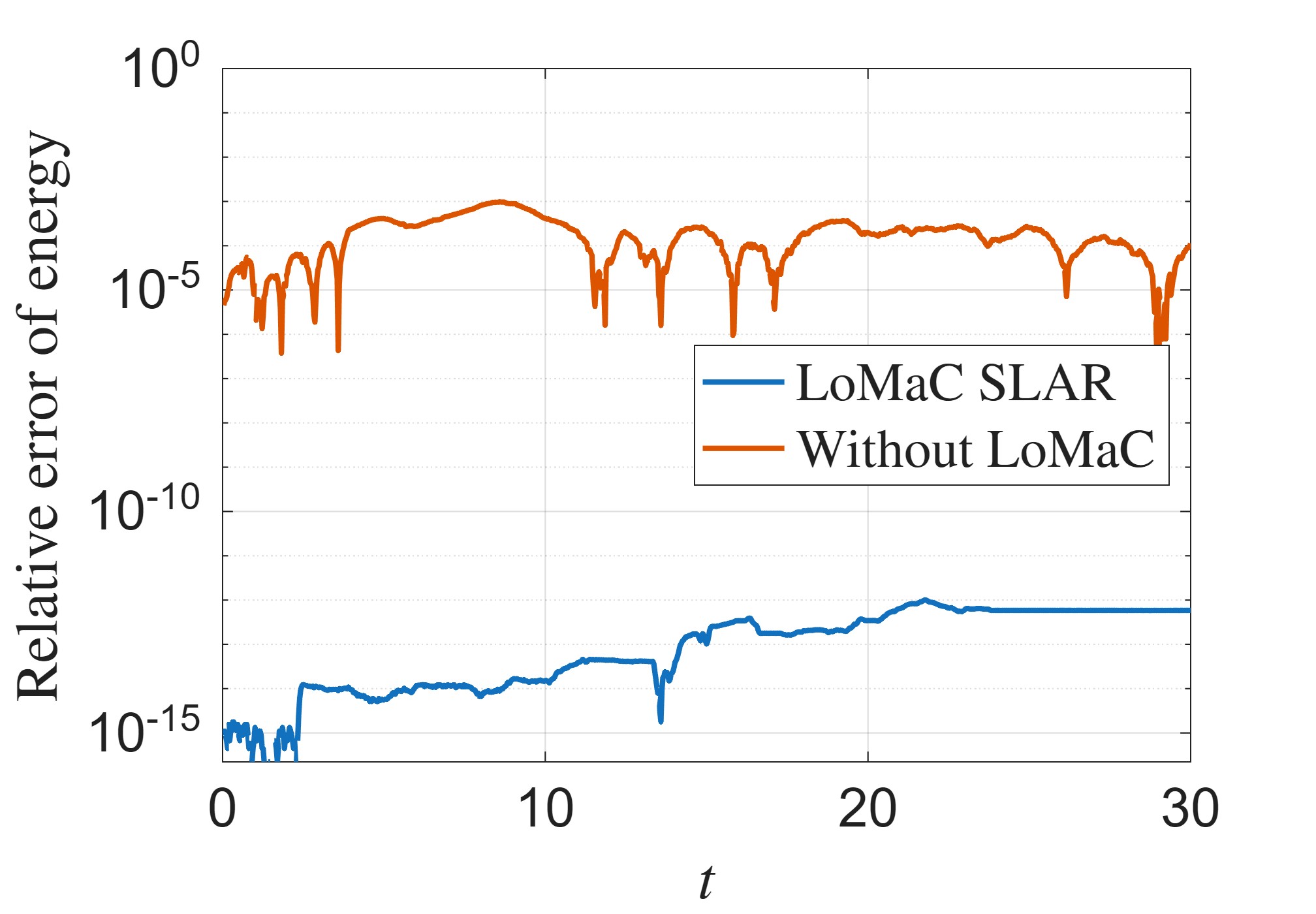}
    }

    \vspace{-0.5cm}

    \hspace{-0.4cm}
    \subfigure{
    \includegraphics[width=0.45\textwidth]{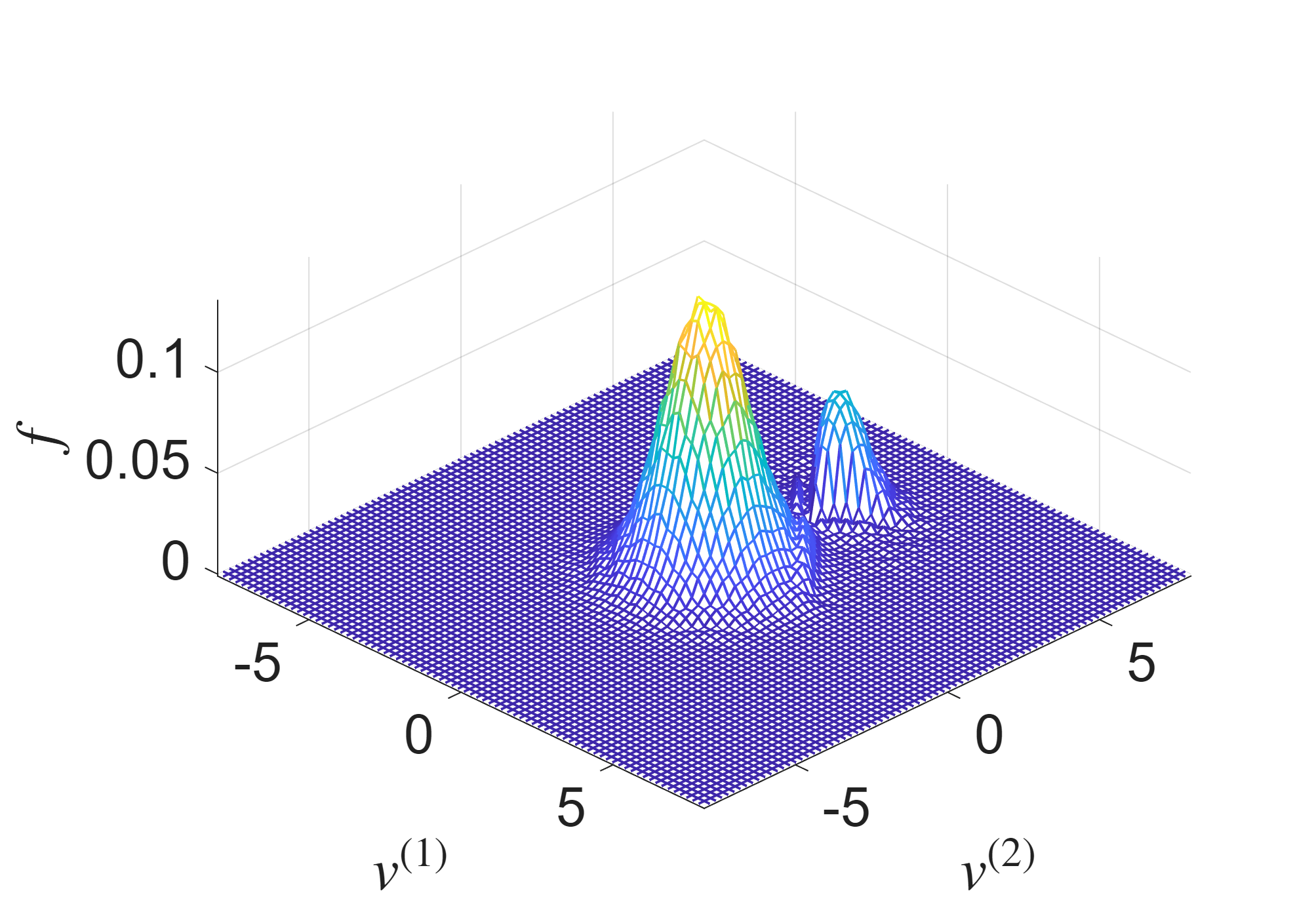}
    }
    \hspace{-0.55cm}
    \subfigure{
    \includegraphics[width=0.45\textwidth]{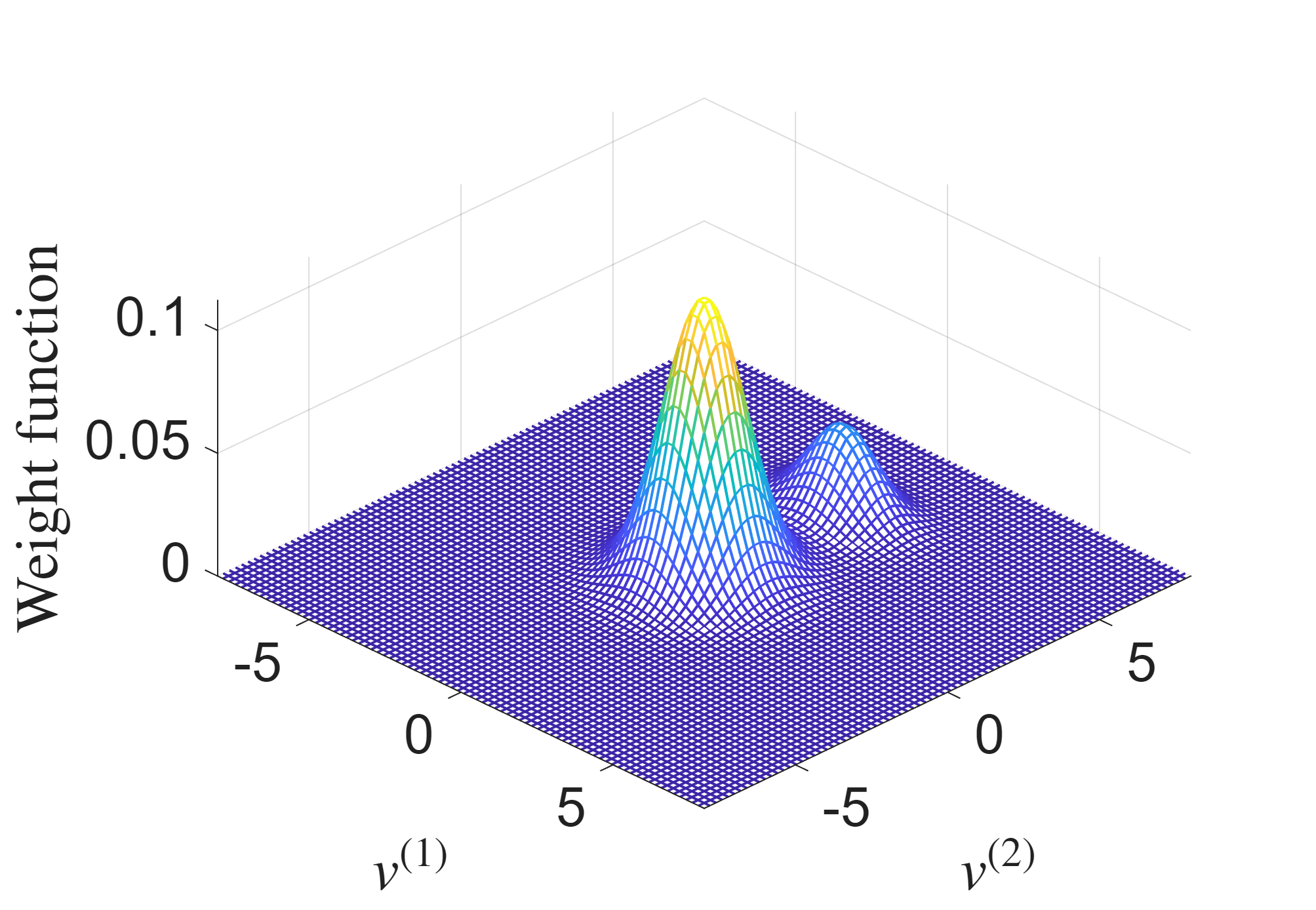}
    }

 \caption{(2D--2V bump-on-tail instability). The first row shows the time evolution of the electric energy, the HT rank history, and a contour plot of the distribution slice at \(x^{(2)} = x^{(2)}_1 = \Delta x^{(2)}/2\) and \(v^{(2)} = v^{(2)}_{64} =  -\Delta v^{(2)}/2\) and $t=30$. The second row shows the relative error in mass, the absolute error in the first component of momentum, and the relative error in total energy. The last row shows the mesh plots of a selected $(v^{(1)},v^{(2)})$-slice of the distribution function and the corresponding adaptive weight function selected by the LoMaC projection at $t=30$.}
\label{fig:2D2V_BOT_results}
\end{figure}

\Cref{fig:2D2V_BOT_solver} shows the performance of the nonlinear macroscopic solver and the wall-clock time histories of the numerical scheme with and without LoMaC. The JFNK iteration counts and wall-clock times are similar to those observed in the 2D--2V two-stream instability test. These results further motivate the development of an enhanced HTACA algorithm to address the sensitivity of the solver to higher solution ranks.

\begin{figure}[!htbp]
\centering

    \hspace{-0.4cm}
    \subfigure{
    \includegraphics[width=0.33\textwidth]{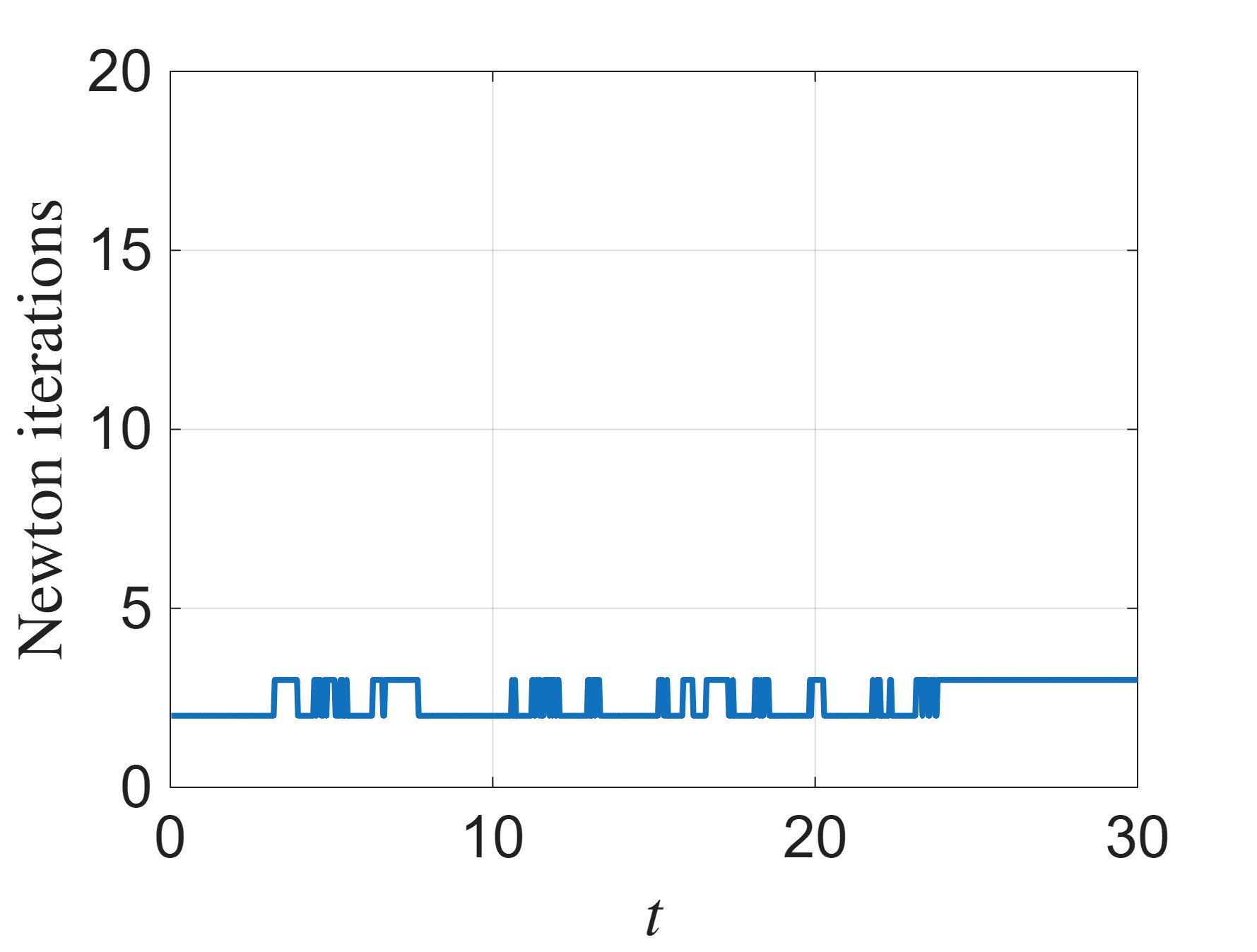}
    }
    \hspace{-0.55cm}
    \subfigure{
    \includegraphics[width=0.33\textwidth]{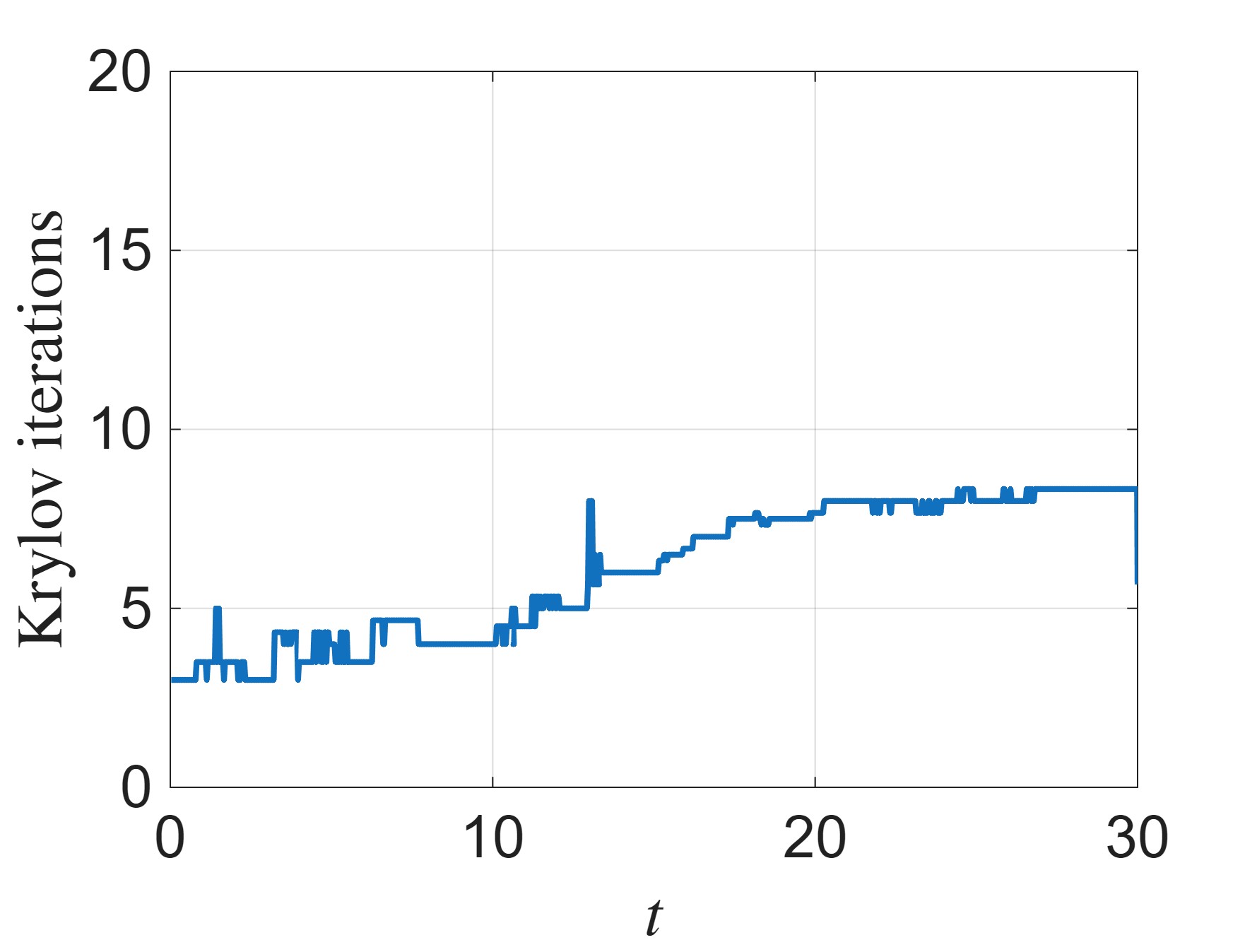}
    }
    \hspace{-0.55cm}
    \subfigure{
    \includegraphics[width=0.33\textwidth]{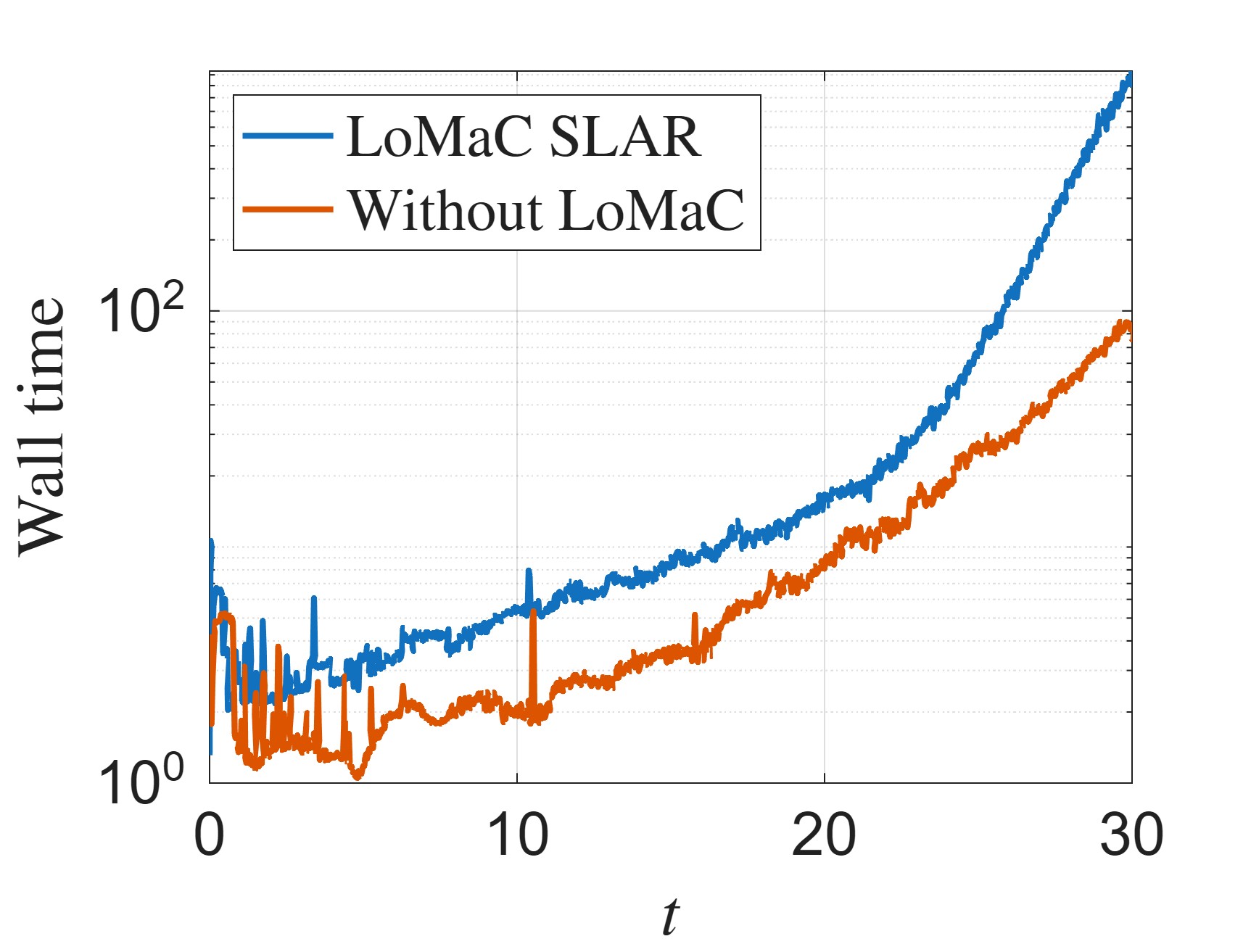}
    }

 \caption{(2D--2V bump-on-tail instability). JFNK solver and performance diagnostics. The panels show the number of Newton iterations, number of Krylov iterations, and wall-clock time in seconds.}
  \label{fig:2D2V_BOT_solver}
\end{figure}

\end{example}

\section{Conclusion}\label{sec:conclusion}

We proposed a new SLAR scheme for the VP system that preserves local conservation laws corresponding to mass, momentum, and total energy. The method achieves third-order accuracy in both space and time and does not use dimensional splitting techniques. Preservation of the local conservation laws is achieved by an implicit coupling of the macroscopic and microscopic descriptions of the VP system, which ensures consistency between the descriptions and permits the use of large time steps, consistent with SL methods. The LoMaC projection used to correct the moments of the kinetic solution is formulated using an adaptive weight function that can be efficiently constructed and naturally captures the dominant velocity space structure of the kinetic solution. When combined with the HTACA algorithm introduced in our previous work \cite{zheng2025semihighD}, the resulting algorithm demonstrates significant improvements in preserving fundamental conservation laws across important numerical benchmarks. Although it is demonstrated in the context of low-rank simulations, the proposed projection technique is quite general and offers a way to construct a family of conservative SL methods without relying on dimensional splitting techniques. Such capabilities are particularly useful in the context of fusion applications where particle transport may not be aligned with the mesh. 

There are several possible directions for future research, some of which are currently being explored. The solver diagnostics presented in this work show that the cost of these corrections is reasonable for modest CFL numbers (e.g., CFL $\leq 10$). However, in order to make these methods competitive at larger CFL numbers, we plan to develop preconditioning strategies for the inner Krylov solver. The development of preconditioning strategies for implicit discretizations of hyperbolic systems is challenging. However, an advantage of the current approach is that this component is of reduced dimensionality when compared to the original kinetic problem. This may allow one to appeal to strategies that cannot be applied to the kinetic system due to its high dimensionality.

Other possible directions concern improvements to the adaptive-rank components of the proposed algorithms. One such example is the HTACA method which is used to construct low-rank approximations in the SLAR method. Although this method was an improvement to existing methods in the literature, the original HTACA algorithm proposed in \cite{zheng2025semihighD} becomes less efficient as the hierarchical ranks increase. We note that there are ongoing efforts to develop more efficient cross-type algorithms for high-dimensional kinetic simulations. Recent work \cite{sands2026multilevel} has also shown multilevel preconditioning to be an effective strategy for linear and nonlinear systems in the adaptive-rank setting.

\appendix

\section{Stability Analysis}
\label{app:stability_proof}
We prove the stability result in \cref{thm:uncond_stab} for the SL--FD scheme with tensor-product quadratic interpolation for the constant-coefficient advection equation.

\subsection*{One-Dimensional Analysis} Consider the one-dimensional advection equation
\(
    f_t + a f_x = 0,
\)
on a uniform grid \(x_j = j\Delta x\). Let \(\Delta t > 0\) and define the CFL
number
\(
    \lambda = a\Delta t/\Delta x.
\)
The SL update reads
\[
    f(x_j,t^{n+1}) = f(x_j - a\Delta t,t^n).
\]
We first assume that
\(
    |\lambda| \le 1/2,
\)
so that the characteristic foot
\(
    x^\star = x_j-a\Delta t
\)
lies in the cell \(I_j=[x_{j-\frac12},x_{j+\frac12}]\). Under this assumption,
the local reconstruction uses the centered three-point stencil
\(
    \{j-1,j,j+1\}.
\)
Since $\lambda = - (x^\star - x_j)/\Delta x$, the discrete SL--FD update can be written as
\begin{equation*}
    f_i^{n+1}
    = \langle\mathbf{w}(-\lambda),(f^n_{j-1},f^n_j,f^n_{j+1})\rangle,
\end{equation*}
where the interpolation weight $\mathbf{w}(\cdot)$ is provided in \eqref{eq:1d_quadratic_interp_weight}. Assume that at time level $n$ the grid function
$\{f_j^n\}_{j\in\mathbb Z}$ consists of a single discrete Fourier mode,
that is,
\[
    f_j^n
    =
    \hat f^{\,n}
    e^{i j\zeta},
    \qquad j\in\mathbb Z,
\]
where $i=\sqrt{-1}$ and
$\zeta\in[0,2\pi]$ is the dimensionless discrete wave number. Because the scheme is linear and translation-invariant, a single Fourier mode remains a Fourier mode after one time step. Hence we write
\[
    f_j^{n+1}
    =
    \hat f^{\,n+1} e^{i j\zeta},\qquad j\in\mathbb{Z}.
\]
Substituting the ansatz into the update formula and
factoring out $e^{i j\zeta}$ yields
\[
    \hat f^{\,n+1}
    =
    g_{1D}(\lambda,\zeta)\,\hat f^{\,n},
\]
with amplification factor 
\begin{equation*}
g_{1D}(\lambda,\zeta) = 1 + \lambda^2(\cos\zeta - 1) - i\lambda \sin\zeta. 
\end{equation*}
A direct calculation gives
\begin{align*}
|g_{1D}(\lambda,\zeta)|^2
&= \bigl(1-\lambda^2+\lambda^2\cos\zeta\bigr)^2
+ \lambda^2 \sin^2\zeta \\
&=
\bigl(1-\lambda^2(1-\cos\zeta)\bigr)^2
+ \lambda^2(1-\cos\zeta)(1+\cos\zeta) \\
&=
1 - \lambda^2(1-\lambda^2)(1-\cos\zeta)^2 .
\end{align*}
Hence, if $|\lambda|\le 1$, then for any $\zeta$, we have $|g_{1D}(\lambda,\zeta)| \le 1$.

\subsection*{Extension to Arbitrary CFL} This can be extended to an arbitrary CFL number as follows. For general $\lambda\in\mathbb{R}$, 
decompose
\[
    \lambda = m + \theta,
    \qquad 
    m = \operatorname{round}(\lambda),
    \quad 
    \theta \in \left(-\frac{1}{2},\frac{1}{2}\right),
\]
where $\operatorname{round}(\cdot)$ denotes the nearest integer. On a uniform grid, the SL--FD interpolation depends only on the relative 
position of the foot point within a local stencil. 
Thus, the update with CFL $\lambda$ can be interpreted as an integer shift by $m$ grid points and a centered quadratic update with reduced CFL $\theta$. 
In Fourier space, the integer shift contributes only a phase factor.
Therefore,
\begin{equation*}
    g_{1D}(\lambda,\zeta)
    =
    e^{-i m \zeta}\,
    g_{1D}(\theta,\zeta),
\end{equation*}
and hence
\[
    |g_{1D}(\lambda,\zeta)|
    =
    |g_{1D}(\theta,\zeta)|
    \le 1,
    \qquad 
    \forall \lambda\in\mathbb{R}.
\]

\subsection*{Extension to Arbitrary Dimensions} Lastly, we extend the one-dimensional stability property to $d$ dimensions. Similar to the one-dimensional case, we define the dimension-wise CFL number \[\bm{\lambda} = (\lambda_1,\cdots,\lambda_d) = \left(\frac{a_1\Delta t}{\Delta x^{(1)}},\ldots,\frac{a_d\Delta t}{\Delta x^{(d)}} \right).\] For a Fourier mode
\[
    F_{j_1,\ldots,j_d}^n
    =
    \hat F^{\,n}
    e^{i\sum_{\mu=1}^d j_\mu \zeta_\mu},\qquad (j_1,\ldots,j_d)\in\mathbb{Z}^d,
\]
where $\zeta_\mu\in[0,2\pi]$ for $\mu = 1,\ldots, d$. Using the tensor-product interpolation formula \eqref{eq:tp_interp}, the amplification factor factorizes as
\[
    g_d(\bm{\lambda},\bm{\zeta})
    =
    \prod_{\mu=1}^d
    g_{1D}(\lambda_\mu,\zeta_\mu),
\]
Taking moduli yields
\[
    |g_d(\bm{\lambda},\bm{\zeta})|
    =
    \prod_{\mu=1}^d
    |g_{1D}(\lambda_\mu,\zeta_\mu)|
    \le 1,
    \qquad
    \forall \bm{\lambda}\in\mathbb{R}^d.
\]
Finally, Parseval's identity implies unconditional
$\ell^2$ stability in any spatial dimension.
\qed

\section*{Acknowledgements}
The authors wish to acknowledge support from AFOSR through grant FA9550-24-1-0254 and the DOE Office of Applied Scientific Computing Research (ASCR) Mathematical Multifaceted Integrated Capabilities Center (MMICC) Program under grant DE-SC0023164. Los Alamos National Laboratory report number LA-UR-26-25294.

\bibliographystyle{unsrtnat}
\bibliography{Reference}

\begin{thebibliography}{10}

\bibitem{alexander1977diagonally}
{\sc R.~Alexander}, {\em Diagonally implicit {Runge--Kutta} methods for stiff
  {ODE’s}}, SIAM Journal on Numerical Analysis, 14 (1977), pp.~1006--1021.

\bibitem{cai2021high}
{\sc X.~Cai, S.~Boscarino, and J.-M. Qiu}, {\em High order semi-{L}agrangian
  discontinuous {G}alerkin method coupled with {Runge-Kutta} exponential
  integrators for nonlinear {V}lasov dynamics}, Journal of Computational
  Physics, 427 (2021), p.~110036.

\bibitem{cai2018high}
{\sc X.~Cai, W.~Guo, and J.-M. Qiu}, {\em A high order semi-{L}agrangian
  discontinuous {G}alerkin method for {V}lasov--{P}oisson simulations without
  operator splitting}, Journal of Computational Physics, 354 (2018),
  pp.~529--551.

\bibitem{celledoni2003commutator}
{\sc E.~Celledoni, A.~Marthinsen, and B.~Owren}, {\em Commutator-free {Lie}
  group methods}, Future Generation Computer Systems, 19 (2003), pp.~341--352.

\bibitem{chacon2017multiscale}
{\sc L.~Chacon, G.~Chen, D.~A. Knoll, C.~Newman, H.~Park, W.~Taitano, J.~A.
  Willert, and G.~Womeldorff}, {\em Multiscale high-order/low-order ({HOLO})
  algorithms and applications}, Journal of Computational Physics, 330 (2017),
  pp.~21--45.

\bibitem{cho2024conservative}
{\sc S.-Y. Cho, M.~Groppi, J.-M. Qiu, G.~Russo, and S.-B. Yun}, {\em
  Conservative semi-{L}agrangian methods for kinetic equations}, in Active
  Particles, Volume 4: Theory, Models, Applications, Springer, 2024,
  pp.~283--420.

\bibitem{crouseilles2010conservative}
{\sc N.~Crouseilles, M.~Mehrenberger, and E.~Sonnendr{\"u}cker}, {\em
  Conservative semi-{L}agrangian schemes for {V}lasov equations}, Journal of
  Computational Physics, 229 (2010), pp.~1927--1953.

\bibitem{ding2020semi}
{\sc M.~Ding, X.~Cai, W.~Guo, and J.-M. Qiu}, {\em A semi-{L}agrangian
  discontinuous {G}alerkin ({DG})--local {DG} method for solving
  convection-diffusion equations}, Journal of Computational Physics, 409
  (2020), p.~109295.

\bibitem{einkemmer2021mass}
{\sc L.~Einkemmer and I.~Joseph}, {\em A mass, momentum, and energy
  conservative dynamical low-rank scheme for the {V}lasov equation}, Journal of
  Computational Physics, 443 (2021), p.~110495.

\bibitem{einkemmer2025review}
{\sc L.~Einkemmer, K.~Kormann, J.~Kusch, R.~G. McClarren, and J.-M. Qiu}, {\em
  A review of low-rank methods for time-dependent kinetic simulations}, Journal
  of Computational Physics,  (2025), p.~114191.

\bibitem{einkemmer2018low}
{\sc L.~Einkemmer and C.~Lubich}, {\em A low-rank projector-splitting
  integrator for the {V}lasov--{P}oisson equation}, SIAM Journal on Scientific
  Computing, 40 (2018), pp.~B1330--B1360.

\bibitem{falcone2013semi}
{\sc M.~Falcone and R.~Ferretti}, {\em Semi-{L}agrangian approximation schemes
  for linear and {H}amilton—{J}acobi equations}, SIAM, 2013.

\bibitem{filbet2001conservative}
{\sc F.~Filbet, E.~Sonnendr{\"u}cker, and P.~Bertrand}, {\em Conservative
  numerical schemes for the {V}lasov equation}, Journal of Computational
  Physics, 172 (2001), pp.~166--187.

\bibitem{gear1971numerical}
{\sc C.~W. Gear}, {\em Numerical initial value problems in ordinary
  differential equations}, Prentice Hall PTR, 1971.

\bibitem{guo2024localDG}
{\sc W.~Guo, J.~F. Ema, and J.-M. Qiu}, {\em A local macroscopic conservative
  ({LoMaC}) low rank tensor method with the discontinuous {G}alerkin method for
  the {V}lasov dynamics}, Communications on Applied Mathematics and
  Computation, 6 (2024), pp.~550--575.

\bibitem{guo2022low}
{\sc W.~Guo and J.-M. Qiu}, {\em A low rank tensor representation of linear
  transport and nonlinear {V}lasov solutions and their associated flow maps},
  Journal of Computational Physics, 458 (2022), p.~111089.

\bibitem{guo2024conservative}
\leavevmode\vrule height 2pt depth -1.6pt width 23pt, {\em A conservative low
  rank tensor method for the {V}lasov dynamics}, SIAM Journal on Scientific
  Computing, 46 (2024), pp.~A232--A263.

\bibitem{guo2024local}
\leavevmode\vrule height 2pt depth -1.6pt width 23pt, {\em A local macroscopic
  conservative ({LoMaC}) low rank tensor method for the {V}lasov dynamics},
  Journal of Scientific Computing, 101 (2024), p.~61.

\bibitem{knoll2004jacobian}
{\sc D.~A. Knoll and D.~E. Keyes}, {\em Jacobian-free {N}ewton--{K}rylov
  methods: a survey of approaches and applications}, Journal of Computational
  Physics, 193 (2004), pp.~357--397.

\bibitem{knoll2005jacobian}
{\sc D.~A. Knoll, V.~Mousseau, L.~Chac{\'o}n, and J.~Reisner}, {\em
  Jacobian-free {N}ewton-{K}rylov methods for the accurate time integration of
  stiff wave systems}, Journal of Scientific Computing, 25 (2005),
  pp.~213--230.

\bibitem{kormann2015semi}
{\sc K.~Kormann}, {\em A semi-{L}agrangian {V}lasov solver in tensor train
  format}, SIAM Journal on Scientific Computing, 37 (2015), pp.~B613--B632.

\bibitem{li2023high}
{\sc L.~Li, J.~Qiu, and G.~Russo}, {\em A high-order semi-{L}agrangian finite
  difference method for nonlinear {V}lasov and {BGK} models}, Communications on
  Applied Mathematics and Computation, 5 (2023), pp.~170--198.

\bibitem{li2022stability}
{\sc Z.~Li and H.-L. Liao}, {\em Stability of variable-step {BDF2} and {BDF3}
  methods}, SIAM Journal on Numerical Analysis, 60 (2022), pp.~2253--2272.

\bibitem{qiu2010conservative}
{\sc J.-M. Qiu and A.~Christlieb}, {\em A conservative high order
  semi-{L}agrangian {WENO} method for the {V}lasov equation}, Journal of
  Computational Physics, 229 (2010), pp.~1130--1149.

\bibitem{qiu2017high}
{\sc J.-M. Qiu and G.~Russo}, {\em A high order multi-dimensional
  characteristic tracing strategy for the {V}lasov--{P}oisson system}, Journal
  of Scientific Computing, 71 (2017), pp.~414--434.

\bibitem{rossmanith2011positivity}
{\sc J.~A. Rossmanith and D.~C. Seal}, {\em A positivity-preserving high-order
  {semi-Lagrangian} discontinuous {Galerkin} scheme for the {Vlasov--Poisson}
  equations}, Journal of Computational Physics, 230 (2011), pp.~6203--6232.

\bibitem{sands2026multilevel}
{\sc W.~A. Sands, P.~T. Guthrey, and N.~V. Roberts}, {\em Multilevel
  adaptive-rank methods for linear and nonlinear systems in the hierarchical
  {T}ucker format}, 2026.

\bibitem{sands2025adaptive}
{\sc W.~A. Sands, J.-M. Qiu, D.~Hayes, and N.~Zheng}, {\em An adaptive-rank
  approach with greedy sampling for multi-scale {BGK} equations}, Journal of
  Computational Physics,  (2025), p.~114523.

\bibitem{sonnendrucker1999semi}
{\sc E.~Sonnendr{\"u}cker, J.~Roche, P.~Bertrand, and A.~Ghizzo}, {\em The
  semi-{L}agrangian method for the numerical resolution of the {V}lasov
  equation}, Journal of computational physics, 149 (1999), pp.~201--220.

\bibitem{ye2024quantized}
{\sc E.~Ye and N.~F. Loureiro}, {\em Quantized tensor networks for solving the
  {V}lasov--{M}axwell equations}, Journal of Plasma Physics, 90 (2024),
  p.~805900301.

\bibitem{zheng2022fourth}
{\sc N.~Zheng, X.~Cai, J.-M. Qiu, and J.~Qiu}, {\em A fourth-order conservative
  semi-{L}agrangian finite volume {WENO} scheme without operator splitting for
  kinetic and fluid simulations}, Computer Methods in Applied Mechanics and
  Engineering, 395 (2022), p.~114973.

\bibitem{zheng2025semi}
{\sc N.~Zheng, D.~Hayes, A.~Christlieb, and J.-M. Qiu}, {\em A
  semi-{L}agrangian adaptive-rank ({SLAR}) method for linear advection and
  nonlinear {V}lasov-{P}oisson system}, Journal of Computational Physics,
  (2025), p.~113970.

\bibitem{zheng2025semihighD}
{\sc N.~Zheng, W.~A. Sands, D.~Hayes, A.~J. Christlieb, and J.-M. Qiu}, {\em A
  semi-{L}agrangian adaptive rank ({SLAR}) method for high-dimensional {V}lasov
  dynamics}, 2025.

\end{thebibliography}


\end{document}


\maketitle

\section{Supplement A. Derivation of the Macroscopic System}

To derive the macroscopic system of conservation laws, we first obtain a weak
form of the kinetic equation by testing against a function
\(\Phi=\Phi(\bm{v})\). Multiplying the kinetic equation by \(\Phi\) and
applying \(\langle \cdot \rangle_{\bm{v}}\), we obtain
\begin{equation*}
    \left\langle (\partial_t f_{\mathrm e})\Phi \right\rangle_{\bm{v}}
    + \left\langle (\bm{v}\cdot\nabla_{\bm{x}} f_{\mathrm e})\Phi \right\rangle_{\bm{v}}
    + \frac{q_{\mathrm e}}{m_{\mathrm e}}
    \left\langle (\bm{E}\cdot\nabla_{\bm{v}} f_{\mathrm e})\Phi \right\rangle_{\bm{v}}
    = 0.
\end{equation*}
Next, we make use of the identities
\begin{align*}
    \left\langle (\partial_t f_{\mathrm e})\Phi \right\rangle_{\bm{v}}
    &=
    \partial_t \left\langle f_{\mathrm e}\Phi \right\rangle_{\bm{v}},
    \\
    \left\langle (\bm{v}\cdot\nabla_{\bm{x}} f_{\mathrm e})\Phi \right\rangle_{\bm{v}}
    &=
    \nabla_{\bm{x}}\cdot
    \left\langle \bm{v}\Phi f_{\mathrm e} \right\rangle_{\bm{v}},
    \\
    \frac{q_{\mathrm e}}{m_{\mathrm e}}
    \left\langle (\bm{E}\cdot\nabla_{\bm{v}} f_{\mathrm e})\Phi \right\rangle_{\bm{v}}
    &=
    -\frac{q_{\mathrm e}}{m_{\mathrm e}}
    \left\langle f_{\mathrm e}\nabla_{\bm{v}}\Phi \right\rangle_{\bm{v}}
    \cdot \bm{E}.
\end{align*}
The first two identities follow from the fact that \(\Phi\) depends only on
\(\bm{v}\). The third identity follows from integration by parts in
\(\bm{v}\), together with the assumption that the boundary term in velocity
space vanishes.

The macroscopic equations are obtained by choosing
\[
    \Phi(\bm{v})
    \in
    \left\{
        1,\,
        m_{\mathrm e} \bm{v},\,
        \frac{1}{2}m_{\mathrm e}|\bm{v}|^2
    \right\},
\]
which leads to the moment system
\renewcommand{\arraystretch}{1.25}
\begin{equation}
\label{eq:raw moment system}
    \partial_{t}\begin{pmatrix}
        n_e \\
        m_{\mathrm e} n_e \bm{u}_e \\
        e_{k,\mathrm e}
    \end{pmatrix}
    +
    \nabla_{\bm{x}}\cdot
    \begin{pmatrix}
        \left\langle \bm{v} f_{\mathrm e} \right\rangle_{\bm{v}} \\
        \left\langle m_{\mathrm e}(\bm{v}\otimes\bm{v}) f_{\mathrm e} \right\rangle_{\bm{v}} \\
        \frac{1}{2}
        \left\langle
            m_{\mathrm e}|\bm{v}|^2\bm{v} f_{\mathrm e}
        \right\rangle_{\bm{v}}
    \end{pmatrix}
    =
    \begin{pmatrix}
        0 \\
        \rho_{\mathrm e}\bm{E} \\
        \bm{J}_{\mathrm e}\cdot\bm{E}
    \end{pmatrix}.
\end{equation}
These equations represent conservation of particle number, a momentum balance,
and a kinetic energy balance. The kinetic energy is not conserved by itself,
because the particles exchange energy with the electric field.

To identify the corresponding field energy, we integrate the kinetic energy density
equation (last equation in \eqref{eq:raw moment system}) over \(\Omega_{\bm{x}}\):
\begin{equation*}
    \frac{d}{dt}
    \left(
        \int_{\Omega_{\bm{x}}}
        e_{k,\mathrm e}\,d\bm{x}
    \right)
    +
    \int_{\Omega_{\bm{x}}}
    \nabla_{\bm{x}}\cdot
    \left(
        \frac{1}{2}
        \left\langle
            m_{\mathrm e}|\bm{v}|^2\bm{v} f_{\mathrm e}
        \right\rangle_{\bm{v}}
    \right)
    d\bm{x}
    =
    \int_{\Omega_{\bm{x}}}
    \bm{J}_{\mathrm e}\cdot\bm{E}\,d\bm{x}.
\end{equation*}
Applying the divergence theorem to the second term on the left-hand side gives
\begin{equation*}
    \frac{d}{dt}
    \left(
        \int_{\Omega_{\bm{x}}}
        e_{k,\mathrm e}\,d\bm{x}
    \right)
    +
    \int_{\partial\Omega_{\bm{x}}}
    \left(
        \frac{1}{2}
        \left\langle
            m_{\mathrm e}|\bm{v}|^2\bm{v} f_{\mathrm e}
        \right\rangle_{\bm{v}}
    \right)
    \cdot \bm{n}\,dS
    =
    \int_{\Omega_{\bm{x}}}
    \bm{J}_{\mathrm e}\cdot\bm{E}\,d\bm{x}.
\end{equation*}
For periodic boundary conditions, or boundary conditions for which the
corresponding surface term vanishes, this further reduces to
\begin{equation*}
    \frac{d}{dt}
    \left(
        \int_{\Omega_{\bm{x}}}
        e_{k,\mathrm e}\,d\bm{x}
    \right)
    =
    \int_{\Omega_{\bm{x}}}
    \bm{J}_{\mathrm e}\cdot\bm{E}\,d\bm{x}.
\end{equation*}

We now simplify the right-hand side. Using
\(\bm{E}=-\nabla_{\bm{x}}\phi\), we have
\begin{align*}
    \int_{\Omega_{\bm{x}}}
    \bm{J}_{\mathrm e}\cdot\bm{E}\,d\bm{x}
    &=
    -\int_{\Omega_{\bm{x}}}
    \bm{J}_{\mathrm e}\cdot\nabla_{\bm{x}}\phi\,d\bm{x}
    \\
    &=
    -\int_{\Omega_{\bm{x}}}
    \nabla_{\bm{x}}\cdot(\bm{J}_{\mathrm e}\phi)\,d\bm{x}
    +
    \int_{\Omega_{\bm{x}}}
    (\nabla_{\bm{x}}\cdot\bm{J}_{\mathrm e})\phi\,d\bm{x}
    \\
    &=
    -\int_{\partial\Omega_{\bm{x}}}
    (\bm{J}_{\mathrm e}\phi)\cdot\bm{n}\,dS
    +
    \int_{\Omega_{\bm{x}}}
    (\nabla_{\bm{x}}\cdot\bm{J}_{\mathrm e})\phi\,d\bm{x}.
\end{align*}
Under the same boundary assumptions, the surface term vanishes. Moreover, since
the ions are stationary, the total current is the electron current,
\(\bm{J}=\bm{J}_{\mathrm e}\). Therefore,
\begin{equation*}
    \int_{\Omega_{\bm{x}}}
    \bm{J}_{\mathrm e}\cdot\bm{E}\,d\bm{x}
    =
    \int_{\Omega_{\bm{x}}}
    (\nabla_{\bm{x}}\cdot\bm{J})\phi\,d\bm{x}.
\end{equation*}

We next use the charge continuity equation
\begin{equation*}
    \partial_t\rho+\nabla_{\bm{x}}\cdot\bm{J}=0,
\end{equation*}
together with the time derivative of Poisson's equation. Since
\[
    -\epsilon_0\Delta_{\bm{x}}\phi = \rho,
\]
we obtain
\begin{equation*}
    \nabla_{\bm{x}}\cdot\bm{J}
    =
    -\partial_t\rho
    =
    \epsilon_0\Delta_{\bm{x}}(\partial_t\phi).
\end{equation*}
Again, by the divergence theorem
\begin{align*}
    \int_{\Omega_{\bm{x}}}
    (\nabla_{\bm{x}}\cdot\bm{J})\phi\,d\bm{x}
    &=
    \epsilon_0
    \int_{\Omega_{\bm{x}}}
    \Delta_{\bm{x}}(\partial_t\phi)\,\phi\,d\bm{x}
    \\
    &=
    \epsilon_0
    \int_{\Omega_{\bm{x}}}
    \nabla_{\bm{x}}\cdot
    \left(
        \nabla_{\bm{x}}(\partial_t\phi)\,\phi
    \right)
    d\bm{x}
    -
    \epsilon_0
    \int_{\Omega_{\bm{x}}}
    \nabla_{\bm{x}}(\partial_t\phi)
    \cdot
    \nabla_{\bm{x}}\phi\,d\bm{x}
    \\
    &=
    \epsilon_0
    \int_{\partial\Omega_{\bm{x}}}
    \left(
        \nabla_{\bm{x}}(\partial_t\phi)\,\phi
    \right)
    \cdot\bm{n}\,dS
    -
    \frac{d}{dt}
    \left(
        \frac{\epsilon_0}{2}
        \int_{\Omega_{\bm{x}}}
        |\nabla_{\bm{x}}\phi|^2\,d\bm{x}
    \right)
    \\
    &=
    -
    \frac{d}{dt}
    \left(
        \frac{\epsilon_0}{2}
        \int_{\Omega_{\bm{x}}}
        |\nabla_{\bm{x}}\phi|^2\,d\bm{x}
    \right)
    \\
    &=
    -
    \frac{d}{dt}
    \left(
        \frac{\epsilon_0}{2}
        \int_{\Omega_{\bm{x}}}
        |\bm{E}|^2\,d\bm{x}
    \right),
\end{align*}
where the surface term again vanishes under the assumed boundary conditions.
Combining these identities, we obtain conservation of the total energy,
\begin{equation*}
    \frac{d}{dt}
    \int_{\Omega_{\bm{x}}}
    \left(
        e_{k,\mathrm e}
        +
        \frac{\epsilon_0}{2}|\bm{E}|^2
    \right)
    d\bm{x}
    =
    0.
\end{equation*}

Since the kinetic energy density is not conserved locally by itself, we define
the total energy density
\begin{equation*}
    e := e_{k,\mathrm e} + e_p,
\end{equation*}
where the electric field energy density is
\begin{equation*}
    e_p = \frac{\epsilon_0}{2}|\bm{E}|^2.
\end{equation*}
We can derive a local balance for \(e_p\) by differentiating in time:
\begin{align*}
    \partial_t e_p
    &=
    \partial_t
    \left(
        \frac{\epsilon_0}{2}|\bm{E}|^2
    \right)
    \\
    &=
    \epsilon_0\bm{E}\cdot\partial_t\bm{E}
    \\
    &=
    \epsilon_0
    \nabla_{\bm{x}}\phi
    \cdot
    \nabla_{\bm{x}}(\partial_t\phi)
    \\
    &=
    \epsilon_0
    \nabla_{\bm{x}}\cdot
    \left(
        \phi\nabla_{\bm{x}}(\partial_t\phi)
    \right)
    -
    \epsilon_0
    \Delta_{\bm{x}}(\partial_t\phi)\,\phi
    \\
    &=
    \epsilon_0
    \nabla_{\bm{x}}\cdot
    \left(
        \phi\nabla_{\bm{x}}(\partial_t\phi)
    \right)
    -
    (\nabla_{\bm{x}}\cdot\bm{J})\phi
    \\
    &=
    \epsilon_0
    \nabla_{\bm{x}}\cdot
    \left(
        \phi\nabla_{\bm{x}}(\partial_t\phi)
    \right)
    -
    \nabla_{\bm{x}}\cdot(\bm{J}\phi)
    +
    \bm{J}\cdot\nabla_{\bm{x}}\phi
    \\
    &=
    \epsilon_0
    \nabla_{\bm{x}}\cdot
    \left(
        \phi\nabla_{\bm{x}}(\partial_t\phi)
    \right)
    -
    \nabla_{\bm{x}}\cdot(\bm{J}\phi)
    -
    \bm{J}\cdot\bm{E}.
\end{align*}
Adding this field energy balance to the kinetic energy balance gives the
local conservation law
\begin{equation*}
    \partial_t e
    +
    \nabla_{\bm{x}}\cdot
    \left(
        \frac{1}{2}
        \left\langle
            m_{\mathrm e}|\bm{v}|^2\bm{v} f_{\mathrm e}
        \right\rangle_{\bm{v}}
        -
        \epsilon_0\phi\nabla_{\bm{x}}(\partial_t\phi)
        +
        \bm{J}\phi
    \right)
    =
    0.
\end{equation*}
The quantity \(\partial_t\phi\) is obtained by solving the Poisson equation
\begin{equation*}
    -\epsilon_0\Delta_{\bm{x}}(\partial_t\phi)
    =
    -\nabla_{\bm{x}}\cdot\bm{J}.
\end{equation*}

\section{Supplement B. Construction of the Discrete Projection}
\label{app:discrete_LoMaC}

This section provides the details regarding the construction of the discrete LoMaC projection with the adaptive weight function. First, we introduce weighted inner products and use them with Gram-Schmidt orthogonalization to construct an explicit projection.

\subsection{Discrete velocity weight and weighted inner products}
\label{app:discrete_LoMaC_A1}

Let $\mathbf F^\star$ denote a discrete phase-space tensor.
In the LoMaC update, $\mathbf F^\star$ corresponds to the intermediate SLAR solution $\mathcal F^{n+1,\star}$.
We denote by $\mathbf F^\star_{(\mu)}\in
\mathbb R^{N_{v^{(\mu)}}\times \bar{M}}$
the matricization of $\mathbf F^\star$ with respect to the
dimension corresponding to $v^{(\mu)}$,
where $\bar{M}$ is the product of all remaining spatial and velocity degrees of freedom.
Under standard quadrature discretization,
$\mathcal T_\mu$ is represented by $\mathbf F^\star_{(\mu)}$ and
$\mathcal T_\mu \mathcal T_\mu^*$ corresponds to
\[
\mathbf{F}_{(\mu)}^\star (\mathbf{F}_{(\mu)}^\star)^\top.
\]
Therefore, the discrete dominant eigenfunction coincides with the first left
singular vector of $\mathbf F^\star_{(\mu)}$:
\[
\mathbf u_1^{(\mu)}
=
\text{the first left singular vector of }\mathbf F_{(\mu)}^\star.
\]

For structured tensor formats, e.g., the HTD, there is an analogous concept. As an example, the final truncation step of the HTACA algorithm applies the high-order SVD to the nodes of the dimension tree using a post-order traversal. Upon completion, the first column of each leaf basis matrix of $\mathcal{F}^{n+1,\star}$ coincides with the leading
left singular vector of the corresponding mode-$\mu$ matricization.
Consequently, the discrete factors $\mathbf u_1^{(\mu)}$ required for the adaptive
velocity weight construction are obtained without additional computational cost.


To remove the sign ambiguity, we define the one-dimensional velocity factors
componentwise by
\[
\bm{\omega}^{(\mu),\star}
=
\big| \mathbf u_1^{(\mu)} \big|,
\]
where $|\cdot|$ denotes elementwise absolute value.
The discrete velocity weight is then constructed in separable form,
\begin{equation}
\label{eq:omega_star_tensor}
\mathbf{\Omega}^\star
=
\bm{\omega}^{(1),\star}
\otimes
\cdots
\otimes
\bm{\omega}^{(d_v),\star}.
\end{equation}
To ensure well-posedness of the weighted inner product defined below,
we enforce strict positivity at the level of the one-dimensional velocity
factors. Let $\delta>0$ be a small floor parameter independent of $\mu$.
For each velocity component $\mu=1,\dots,d_v$, we modify
\begin{equation*}
\omega^{(\mu),\star}_{k_{\mu}}
\leftarrow
\max\!\big(\omega^{(\mu),\star}_{k_{\mu}},\delta\big),
\qquad
k_{\mu}=1,\dots,N_{v^{(\mu)}}.
\end{equation*}
The discrete velocity weight is then defined in separable form as in
\eqref{eq:omega_star_tensor}. Consequently, for any multi-index
$k=(k_1,\dots,k_{d_v})$,
\[
\Omega^\star_k
=
\prod_{\mu=1}^{d_v}
\omega^{(\mu),\star}_{k_\mu}
\;\ge\;
\delta^{d_v}
\;>\;0,
\]
which guarantees strict positivity on the entire discrete velocity grid
while preserving the rank-one tensor structure.


Let
\(
\mathbf{F}
\in
\mathbb R^{N_{x^{(1)}}\times\cdots\times N_{x^{(d_x)}}
\times
N_{v^{(1)}}\times\cdots\times N_{v^{(d_v)}}}
\)
denote a discrete phase-space tensor.
We write its entries as $F_{i,k}$,
where $i=(i_1,\dots,i_{d_x})$ and
$k=(k_1,\dots,k_{d_v})$
denote spatial and velocity multi-indices, respectively. We introduce the following discrete inner products.

For phase-space tensors \(\mathbf A\) and \(\mathbf B\), we define
\begin{equation}
\label{eq:inner_global}
\langle \mathbf A,\mathbf B\rangle_{x,v;\omega^{-1}}
:=
\sum_i\sum_k
A_{i,k}B_{i,k}
\frac{w_i^x w_k^v}{\Omega^\star_k},
\end{equation}
where \(w_i^x\) and \(w_k^v\) are the spatial and velocity quadrature weights,
respectively. On uniform grids,
\(w_i^x=\prod_{\mu=1}^{d_x}\Delta x^{(\mu)}\) and
\(w_k^v=\prod_{\mu=1}^{d_v}\Delta v^{(\mu)}\). Similarly, for velocity tensors
\(\mathbf G,\mathbf H\in
\mathbb R^{N_{v^{(1)}}\times\cdots\times N_{v^{(d_v)}}}\), we define
\begin{equation}
\label{eq:inner_velocity}
\langle \mathbf G,\mathbf H\rangle_{v;\omega^{-1}}
:=
\sum_k G_kH_k\frac{w_k^v}{\Omega^\star_k}.
\end{equation}
The discrete velocity projection
\(\mathcal P_{\mathcal N(\mathbf\Omega^\star)}\) is the orthogonal projection
onto \(\mathcal N(\mathbf\Omega^\star)\) with respect to
\eqref{eq:inner_velocity}. The corresponding phase-space projection is
\[
\mathcal P^\star
=
\mathcal I_{\mathbb R^{N_{x^{(1)}}\times\cdots\times N_{x^{(d_x)}}}}
\otimes
\mathcal P_{\mathcal N(\mathbf\Omega^\star)},
\]
which acts independently on each spatial multi-index. By the tensor-product
structure of \eqref{eq:inner_global}, \(\mathcal P^\star\) is the orthogonal
projection onto
\[
\mathbb R^{N_{x^{(1)}}\times\cdots\times N_{x^{(d_x)}}}
\otimes
\mathcal N(\mathbf\Omega^\star)
\]
with respect to \eqref{eq:inner_global}.

Since \(\mathbf\Omega^\star\) is separable, see \eqref{eq:omega_star_tensor},
we have
\[
\Omega^\star_k
=
\prod_{\mu=1}^{d_v}\omega^{(\mu),\star}_{k_\mu},
\qquad
w_k^v
=
\prod_{\mu=1}^{d_v}w^{(\mu)}_{k_\mu}.
\]
For one-dimensional velocity components, define
\begin{equation*}
\langle \mathbf p,\mathbf q\rangle_{v^{(\mu)};\omega^{(\mu),-1}}
:=
\sum_{k_\mu}
p_{k_\mu}q_{k_\mu}
\frac{w^{(\mu)}_{k_\mu}}{\omega^{(\mu),\star}_{k_\mu}}.
\end{equation*}
Consequently, if
\[
\mathbf G=\bigotimes_{\mu=1}^{d_v}\mathbf g^{(\mu)},
\qquad
\mathbf H=\bigotimes_{\mu=1}^{d_v}\mathbf h^{(\mu)},
\]
then
\begin{equation}
\label{eq:inner_prod_factorization}
\langle \mathbf G,\mathbf H\rangle_{v;\omega^{-1}}
=
\prod_{\mu=1}^{d_v}
\left\langle
\mathbf g^{(\mu)},\mathbf h^{(\mu)}
\right\rangle_{v^{(\mu)};\omega^{(\mu),-1}}.
\end{equation}
This factorization is used in the weighted Gram--Schmidt construction below.




\subsection{Weighted Gram--Schmidt Construction of $\mathcal P^\star$}
\label{app:discrete_LoMaC_A2}

For each velocity component $\mu=1,\dots,d_v$,
we perform a one-dimensional weighted Gram--Schmidt orthogonalization
on the set
\[
\big\{
\bm{\omega}^{(\mu),\star},\;
\mathbf{v}^{(\mu)} * \bm{\omega}^{(\mu),\star},\;
(\mathbf{v}^{(\mu)})^{* 2} * \bm{\omega}^{(\mu),\star}
\big\}
\subset \mathbb R^{N_{v^{(\mu)}}},
\]
with respect to
$\langle\cdot,\cdot\rangle_{v^{(\mu)};\omega^{(\mu),-1}}$. Define the one-dimensional quantities
\[
\gamma_\mu
=
\frac{
\big\langle
\mathbf{v}^{(\mu)} * \bm{\omega}^{(\mu),\star},
\;
\bm{\omega}^{(\mu),\star}
\big\rangle_{v^{(\mu)};\bm{\omega}^{(\mu),-1}}
}{
\big\langle
\bm{\omega}^{(\mu),\star},
\;
\bm{\omega}^{(\mu),\star}
\big\rangle_{v^{(\mu)};\bm{\omega}^{(\mu),-1}}
},
\]
\[
\mathbf{q}_1^{(\mu)}
=
\mathbf{v}^{(\mu)}*\bm{\omega}^{(\mu),\star} - \gamma_\mu \bm{\omega}^{(\mu),\star},
\]
and
\[
\eta_\mu
=
\frac{
\big\langle
(\mathbf{v}^{(\mu)})^{* 2}*\bm{\omega}^{(\mu),\star},
\;
\bm{\omega}^{(\mu),\star}
\big\rangle_{v^{(\mu)};\bm{\omega}^{(\mu),-1}}
}{
\big\langle
\bm{\omega}^{(\mu),\star},
\;
\bm{\omega}^{(\mu),\star}
\big\rangle_{v^{(\mu)};\bm{\omega}^{(\mu),-1}}
},
\]
\[
\beta_\mu
=
\frac{
\big\langle
(\mathbf{v}^{(\mu)})^{* 2}*\bm{\omega}^{(\mu),\star},
\;
\mathbf{q}_1^{(\mu)}
\big\rangle_{v^{(\mu)};\bm{\omega}^{(\mu),-1}}
}{
\big\langle
\mathbf{q}_1^{(\mu)},
\;
\mathbf{q}_1^{(\mu)}\big\rangle_{v^{(\mu)};\bm{\omega}^{(\mu),-1}}}
.
\]
Then the one-dimensional orthogonalized quadratic factor is
\[
\mathbf{q}_2^{(\mu)}
=
(\mathbf{v}^{(\mu)})^{* 2}*\bm{\omega}^{(\mu),\star}
-
\eta_\mu \bm{\omega}^{(\mu),\star}
-
\beta_\mu \mathbf{q}_1^{(\mu)}.
\]
By construction,
\(
\bm{\omega}^{(\mu),\star}\),
\(\mathbf{q}_1^{(\mu)}\), and
\(\mathbf{q}_2^{(\mu)}
\)
are mutually orthogonal under
$\langle\cdot,\cdot\rangle_{v^{(\mu)};\omega^{(\mu),-1}}$.


We now lift the one-dimensional weighted Gram--Schmidt construction
to $d_v$ dimensions.
Owing to the tensor-product structure of the multidimensional
velocity inner product
\eqref{eq:inner_velocity}
and its factorization property
\eqref{eq:inner_prod_factorization},
the orthogonality in each coordinate direction
extends to the full velocity space.
Accordingly, we define the following (non-normalized)
tensor-product tensors in
\(
\mathbb R^{N_{v^{(1)}}\times\cdots\times N_{v^{(d_v)}}}
\):
\[
\mathbf Q_0
=
\mathbf\Omega^\star,
\]
\[
\mathbf Q_{1,\mu}
=
\bm{\omega}^{(1),\star}
\otimes \cdots \otimes
\mathbf q_1^{(\mu)}
\otimes \cdots \otimes
\bm{\omega}^{(d_v),\star},
\]
\[
\mathbf Q_2
=
\sum_{\mu=1}^{d_v}
\bm{\omega}^{(1),\star}
\otimes \cdots \otimes
\mathbf q_2^{(\mu)}
\otimes \cdots \otimes
\bm{\omega}^{(d_v),\star}.
\]
Using the factorization property
\eqref{eq:inner_prod_factorization},
one verifies that
\(
\mathbf Q_0,
\{\mathbf Q_{1,\mu}\}_{\mu=1}^{d_v}\), and
\(\mathbf Q_2
\)
are mutually orthogonal
with respect to
\(
\langle\cdot,\cdot\rangle_{v;\omega^{-1}}
\),
and they span
\(
\mathcal N(\mathbf\Omega^\star)
\). Moreover, their squared norms admit explicit tensor-product
representations.
In particular,
\[
\|\mathbf Q_0\|_{v;\omega^{-1}}^2
=
\prod_{\mu=1}^{d_v}
\|\bm{\omega}^{(\mu),\star}\|_{v^{(\mu)};\omega^{(\mu),-1}}^2,
\]
\[
\|\mathbf Q_{1,\mu}\|_{v;\omega^{-1}}^2
=
\|\mathbf q_1^{(\mu)}\|_{v^{(\mu)};\omega^{(\mu),-1}}^2
\prod_{\nu\ne\mu}
\|\bm{\omega}^{(\nu),\star}\|_{v^{(\nu)};\omega^{(\nu),-1}}^2,
\]
while
\[
\|\mathbf Q_2\|_{v;\omega^{-1}}^2
=
\sum_{\mu=1}^{d_v}
\left(\|\mathbf q_2^{(\mu)}\|_{v^{(\mu)};\omega^{(\mu),-1}}^2
\prod_{\nu\ne\mu}
\|\bm{\omega}^{(\nu),\star}\|_{v^{(\nu)};\omega^{(\nu),-1}}^2\right),
\]
where the cross terms vanish due to orthogonality.
These explicit norm formulas will be used
in the construction of the projection operator below.


We next derive a closed-form expression of the weighted orthogonal
projection
\(
\mathcal{P}^\star
\)
with respect to the global phase-space inner product
\eqref{eq:inner_global}. For any discrete phase-space tensor
\(
\mathbf F,
\)
we define the corresponding (raw) velocity moments
\[
\mathbf M_0,\, \mathbf M_{1,\mu},\, \mathbf M_2
\in
\mathbb R^{N_{x^{(1)}}\times\cdots\times N_{x^{(d_x)}}}, \qquad \mu=1,\ldots,d_v,
\]
componentwise by
\begin{align}
\big(\mathbf M_0(\mathbf F)\big)_i
&=
\sum_k F_{i,k}\, w_k^{v}
=
\langle
\mathbf F_{i,\cdot},
\mathbf\Omega^\star
\rangle_{v;\omega^{-1}},
\label{eq:disc_m0}
\\
\big(\mathbf M_{1,\mu}(\mathbf F)\big)_i
&=
\sum_k (V_\mu)_k\, F_{i,k}\, w_k^{v}
=
\langle
\mathbf F_{i,\cdot},
\mathbf V_\mu*\mathbf\Omega^\star
\rangle_{v;\omega^{-1}},
\qquad \mu=1,\ldots,d_v,
\label{eq:disc_m1}
\\
\big(\mathbf M_2(\mathbf F)\big)_i
&=
\sum_k (V^{* 2})_k\, F_{i,k}\, w_k^{v}
=
\langle
\mathbf F_{i,\cdot},
\mathbf V^{* 2}*\mathbf\Omega^\star
\rangle_{v;\omega^{-1}}.
\label{eq:disc_m2}
\end{align}

Since
\(
\mathcal P^\star
=
\mathcal I_{R^{N_{x^{(1)}}\times\cdots\times N_{x^{(d_x)}}}}
\otimes
\mathcal P_{\mathcal N(\mathbf\Omega^\star)},
\)
the projection acts independently on each spatial multi-index.
Using the orthogonal basis
\(
\mathbf Q_0,
\{\mathbf Q_{1,\mu}\}_{\mu=1}^{d_v},
\mathbf Q_2
\)
of
\(
\mathcal N(\mathbf\Omega^\star),
\)
we obtain the explicit representation
\begin{equation}
\label{eq:phase_proj_closed}
\mathcal P^\star \mathbf F
=
\mathbf C_0(\mathbf F)\otimes \mathbf Q_0
+
\sum_{\mu=1}^{d_v}
\mathbf C_{1,\mu}(\mathbf F)\otimes \mathbf Q_{1,\mu}
+
\mathbf C_2(\mathbf F)\otimes \mathbf Q_2,
\end{equation}
where
\[
\mathbf C_0, \,
\mathbf C_{1,\mu}, \,
\mathbf C_2
\in
\mathbb R^{N_{x^{(1)}}\times\cdots\times N_{x^{(d_x)}}}
\]
are spatial coefficients which depend on the tensor $\mathbf{F}$.
Here the tensor product
\(
\otimes
\)
denotes the tensor product between a spatial tensor
and a velocity tensor, yielding a phase-space tensor. The operators $\mathbf C_0,\mathbf C_{1,\mu},\mathbf C_2$ are explicit and are evaluated solely from the discrete velocity moments defined above and one-dimensional inner products, as detailed below. 

Since
\(
\mathbf Q_0\),
\(\{\mathbf Q_{1,\mu}\}_{\mu=1}^{d_v}\), and
\(\mathbf Q_2\)
form an orthogonal basis of
\(
\mathcal N(\mathbf\Omega^\star)
\)
under
\(
\langle\cdot,\cdot\rangle_{v;\omega^{-1}},
\)
the velocity projection admits the standard orthogonal expansion formula.
In particular, for any velocity tensor
\(
\mathbf G\in\mathbb R^{N_{v^{(1)}}\times\cdots\times N_{v^{(d_v)}}},
\)
\[
\mathcal P_{\mathcal N(\mathbf\Omega^\star)}\mathbf G
=
\frac{
\langle \mathbf G,\mathbf Q_0\rangle_{v;\omega^{-1}}
}{
\|\mathbf Q_0\|_{v;\omega^{-1}}^2
}
\mathbf Q_0 + 
\sum_{\mu=1}^{d_v}\frac{
\langle \mathbf G,\mathbf Q_{1,\mu}\rangle_{v;\omega^{-1}}
}{
\|\mathbf Q_{1,\mu}\|_{v;\omega^{-1}}^2
}
\mathbf Q_{1,\mu}+
\frac{
\langle \mathbf G,\mathbf Q_2\rangle_{v;\omega^{-1}}
}{
\|\mathbf Q_2\|_{v;\omega^{-1}}^2
}
\mathbf Q_2.
\]
Applying this expansion to each fixed spatial multi-index $i$,
that is, to the velocity slice
\(
\mathbf F_{i,\cdot},
\)
and collecting the resulting coefficients over all spatial indices,
we obtain the spatial coefficients
\begin{align}
\mathbf{C}_{0}(\mathbf F)
&=
\frac{\mathbf M_0(\mathbf F)}
{\|\mathbf Q_0\|_{v;\omega^{-1}}^2},
\label{eq:C0_def}
\\
\mathbf{C}_{1,\mu}(\mathbf F)
&=
\frac{
\mathbf M_{1,\mu}(\mathbf F)
-
\gamma_\mu\,\mathbf M_0(\mathbf F)
}{
\|\mathbf Q_{1,\mu}\|_{v;\omega^{-1}}^2},
\qquad
\mu=1,\ldots,d_v,
\label{eq:C1_def}
\\
\mathbf{C}_{2}(\mathbf F)
&=
\frac{
\mathbf M_2(\mathbf F)
-
\Big(\sum_{\mu=1}^{d_v}\eta_\mu\Big)\mathbf M_0(\mathbf F)
-
\sum_{\mu=1}^{d_v}
\beta_\mu
\big(
\mathbf M_{1,\mu}(\mathbf F)
-
\gamma_\mu \mathbf M_0(\mathbf F)
\big)
}{
\|\mathbf Q_2\|_{v;\omega^{-1}}^2}.
\label{eq:C2_def}
\end{align}

\subsection{Reconstruction and Energy Consistency}

We now define the reconstruction operator that realizes the phase-space projection
$\mathcal{P}^\star$ using the conserved moment vector $\mathbf U$. The projection coefficients
in \eqref{eq:C0_def}--\eqref{eq:C2_def} are written in terms of the raw velocity moments
\eqref{eq:disc_m0}--\eqref{eq:disc_m2}. Therefore, the first step is to convert the conserved quantities into raw moments. This is accomplished using the discrete form of the identities
\begin{equation}
\label{eq:raw_from_conserved_supp}
    M_0 = U_0, 
    \qquad 
    M_{1,\mu} = \frac{1}{m_e} U_{1,\mu},
    \qquad
    M_2 = \frac{2}{m_e}\left(U_2 - e_p\right).
\end{equation}
The last relation assumes that the electric field energy density $e_p$ is computed
from the solution of Poisson's equation using the charge density determined by $U_0$.
Equivalently, for a fixed discrete Poisson solver, the discrete map
\[
    \mathbf U_0 \longmapsto \boldsymbol \rho \longmapsto \mathbf E \longmapsto \mathbf e_p
\]
is well-defined. Hence the conserved variables uniquely determine the corresponding
raw moments.

Given $\mathbf U$, we first compute its raw moments from a discrete analogue of the identity \eqref{eq:raw_from_conserved_supp}
and then substitute these quantities into the coefficient formulas
\eqref{eq:C0_def}--\eqref{eq:C2_def}. We denote the resulting spatial coefficients by $\widetilde{\mathbf{C}}_{0}(\mathbf{U})$, $\widetilde{\mathbf{C}}_{1,\mu}(\mathbf{U})$, and
$\widetilde{\mathbf{C}}_{2}(\mathbf{U})$. The reconstruction operator is then defined by
\begin{equation}
\label{eq:reconstruction_supp}
    \mathcal{R}(\mathbf{U})
    =
    \widetilde{\mathbf{C}}_{0}(\mathbf{U}) \otimes \mathbf{Q}_{0}
    +
    \sum_{\mu=1}^{d_v}
    \widetilde{\mathbf{C}}_{1,\mu}(\mathbf{U}) \otimes \mathbf{Q}_{1,\mu}
    +
    \widetilde{\mathbf{C}}_{2}(\mathbf{U}) \otimes \mathbf{Q}_{2}.
\end{equation}
Here each tensor product is taken between a spatial tensor and a velocity tensor,
which results in a phase-space tensor.

This construction is consistent with the projection derived in the previous section.
Indeed, if $\mathbf{U}(\mathbf{F})$ denotes the conserved moment vector associated with a phase-space tensor $\mathbf{F}$, then converting $\mathbf{\mathbf{U}}$ to raw moments using
\eqref{eq:raw_from_conserved_supp} recovers precisely the raw moments
$\mathbf{M}_{0}(\mathbf{F})$, $\mathbf{M}_{1,\mu}(\mathbf{F})$, and $\mathbf{M}_{2}(\mathbf{F})$. Therefore the coefficients in
\eqref{eq:reconstruction_supp} agree with those in the projection formula
\eqref{eq:phase_proj_closed}, and hence
\begin{equation*}
    \mathcal{R}(\mathbf{U}(\mathbf{F})) = \mathcal{P}^\star \mathbf{F}.
\end{equation*}

Moreover, $\mathcal{P}^\star \mathbf{F}$ preserves the raw moments of $\mathbf{F}$. This follows directly from
the construction of $\mathcal{P}^\star$ as the weighted orthogonal projection onto
$\mathcal N(\mathbf\Omega^\star)$, whose basis was chosen to represent the zeroth, first,
and second raw velocity moments. Therefore,
\[
    \mathbf{M}_{0}(\mathcal{P}^\star \mathbf{F}) = \mathbf{M}_{0}(\mathbf{F}), 
    \qquad
    \mathbf{M}_{1,\mu}(\mathcal{P}^\star \mathbf{F}) = \mathbf{M}_{1,\mu}(\mathbf{F}),
    \qquad
    \mathbf{M}_{2}(\mathcal{P}^\star \mathbf{F}) = \mathbf{M}_{2}(\mathbf{F}).
\]
Applying the identity \eqref{eq:raw_from_conserved_supp} in reverse shows that
\begin{equation*}
    \mathbf{U} (\mathcal{P}^\star \mathbf{F}) = \mathbf{U}(\mathbf{F}).
\end{equation*}
Thus the reconstruction preserves the mass, momentum, and total energy, provided that the electric field energy is evaluated with the same discrete Poisson
solver used in \eqref{eq:raw_from_conserved_supp}.